\date{}
\title{Implicit Representations and Factorial Properties of Graphs}
\author{A. Atminas\thanks{DIMAP and Mathematics Institute, University of Warwick, Coventry, CV4 7AL, UK.} \and 
A. Collins\footnotemark[1]  \and
V. Lozin\thanks{DIMAP and Mathematics Institute, University of Warwick, Coventry, CV4 7AL, UK. 
The author gratefully acknowledges support from DIMAP - the Center for Discrete Mathematics and its Applications 
at the University of Warwick, and from EPSRC, grant EP/I01795X/1. Email: V.Lozin@warwick.ac.uk} \and
V. Zamaraev\thanks{National Research University Higher School of Economics, Laboratory of Algorithms and Technologies for Network Analysis, Russia. 
Research of this author was supported by 
The National Research University Higher School of Economics
Academic Fund Program in 2014/2015 (research grant N. 14-01-0002); it was also partially supported by Russian Federation President Grant MK-1148.2013.1,
by RFFI grant N. 14-01-00515-a
and by Russian Federation Government grant N. 11.G34.31.0057.}}
\tikzstyle{vertex}=[circle,fill=black!100,text=white,inner sep=0.8mm]
\tikzstyle{point}=[circle,fill=black,inner sep=0.1mm]
\begin{document}
\maketitle

\newtheorem{theorem}{Theorem}
\newtheorem{lemma}{Lemma}
\newtheorem{cor}{Corollary}
\newtheorem{definition}{Definition}
\newtheorem{remark}{Remark}
\newtheorem{conjecture}{Conjecture}

\begin{abstract}
The idea of implicit representation of graphs was introduced in 
[S. Kannan, M. Naor, S. Rudich, Implicit representation of graphs,
{\it SIAM J. Discrete Mathematics}, 5 (1992) 596--603] and can be 
defined as follows. A representation of an $n$-vertex graph $G$ 
is said to be implicit if it assigns to each vertex of $G$ 
a binary code of length $O(\log n)$ so that the adjacency of two vertices 
is a function of their codes. Since an implicit representation of an $n$-vertex 
graph uses $O(n\log n)$ bits, any class of graphs admitting such a representation
contains $2^{O(n\log n)}$ labelled graphs with $n$ vertices. In the terminology of 
[J. Balogh, B. Bollob\'{a}s, D. Weinreich, The speed of hereditary properties of graphs, 
{\it J. Combin. Theory} B 79 (2000) 131--156] such classes have at most factorial speed 
of growth. In this terminology, the implicit graph conjecture can be stated as follows: 
every class with at most factorial speed of growth which is  hereditary admits an implicit 
representation. The question of deciding whether a given hereditary class has at most 
factorial speed of growth is far from being trivial. In the present paper, we introduce 
a number of tools simplifying this question. Some of them can be used to obtain a stronger 
conclusion on the existence of an implicit representation. We apply our tools to reveal 
new hereditary classes with the factorial speed of growth. For many of them
we show the existence of an implicit representation. 
\end{abstract}

{\em Keywords:} Implicit representation; Hereditary class; Factorial property

\section{Introduction}
We study simple graphs, i.e. undirected graphs without loops and multiple edges.
We denote by $M=M_G$ the adjacency matrix of a graph $G$ and by 
$m(u,v)=m_G(u,v)$ the element of $M$ corresponding to vertices $u$ and $v$,
i.e. $m(u,v)=1$ if $u$ and $v$ are adjacent and $m(u,v)=0$ otherwise.

Every simple graph on $n$ vertices can be represented by a binary word of length
$n\choose 2$ (half of the adjacency matrix), and if no a priory information about 
the graph is known, this representation is best possible in terms of its length.
However, if we know that our graph belongs to a particular class (possesses 
a particular property), this representation can be shortened. For instance, 
the Pr\"ufer code allows representing a labelled tree with $n$ vertices 
by a word of length $(n-2)\log n$ (in binary encoding)\footnote{All logarithms in this paper are of base 2}.
For labelled graphs, i.e. graphs with vertex set $\{1,2,\ldots,n\}$, we need $\log n$ bits for each vertex 
just to represent its label. That is why a representation of graphs from a specific class requiring $O(\log n)$ bits per vertex have been called in \cite{implicit} {\it implicit}. 

Throughout the paper by representing a graph we mean its coding, i.e. representing by a word in a finite alphabet
(in our case the alphabet is always binary). Moreover, we assume that different graphs are mapped to different words
(i.e. the mapping is injective) and that the graph can be restored from its code. For an implicit representation,
we additionally require that the code of the graph consists of the codes of its vertices, each of length $O(\log n)$,
and that the adjacency of two vertices, i.e. the element of the adjacency matrix corresponding to these vertices,
can be computed from their codes. 

Not every class of graphs admits an implicit representation, since a bound on the total length of the code
implies a bound on the number of graphs admitting such a representation. More precisely, only classes 
containing $2^{O(n\log n)}$ graphs with $n$ vertices can admit an implicit representation. 
However, this restriction does not guarantee that graphs in such classes can be represented implicitly. 
A simple counter-example can be found in \cite{Spinrad-book}. 
Even with further restriction to {\it hereditary classes}, i.e. those that are closed under taking induced subgraphs, 
the question is still not so easy. The authors of \cite{implicit}, who introduced the notion of an implicit 
representation, conjectured that every hereditary class with  $2^{O(n\log n)}$ graphs on $n$ vertices 
admits an implicit representation, and this conjecture is still open.  

In the terminology of \cite{SpHerProp}, hereditary classes containing $2^{O(n\log n)}$ labelled graphs 
on $n$ vertices are at most {\it factorial}, i.e. have at most factorial speed of growth. Classes with 
speeds lower than factorial are well studied and have a very simple structure. 
The family of factorial classes is substantially richer and the structure of classes in this family is more diverse.
It contains many classes of theoretical or practical importance, such as line graphs, interval graphs, 
permutation graphs, threshold graphs, forests, planar graphs and, even more generally, 
all proper minor-closed graph classes \cite{NSTW06}, all classes of graphs of bounded vertex degree, 
of bounded clique-width \cite{ALR09}, etc.    
   
In spite of the crucial importance of the family of factorial classes, except the definition very 
little can be said about this family in general, and the membership in this family is an open question 
for many particular graph classes. To simplify the study of this question, in the present paper we introduce 
a number of tools and apply them to reveal new members of this family. For some of them, we do even 
better and find an implicit representation. 

The organization of the paper is as follows. In the rest of this section, we introduce basic definitions
and notations related to the topic of the paper. In Section~\ref{sec:tools}, we define our tools and 
then in Section~\ref{sec:appl} we apply them to discover new factorial classes of graphs and new classes 
admitting an implicit representation.

\medskip
The vertex set and the edge set of a graph $G$ are denoted by $V(G)$ and $E(G)$, respectively.
Given a vertex $v\in V(G)$, we denote by $N(v)$ the neighbourhood of $v$, i.e. the set of 
vertices adjacent to $v$. 
For a subset $S \subset V(G)$, we denote by $N(S)$ the neighbourhood of $S$, i.e. the set of vertices outside $S$ that have at least one neighbour in $S$. 
The degree of $v$ is the number of its neighbours, i.e. $|N(v)|$,
and co-degree is the number of its non-neighbours, i.e. its degree in the complement of the graph.
As usual, we denote by $C_n$, $P_n$, $K_n$ the chordless cycle, the chordless path and the complete 
graph with $n$ vertices, respectively. 
By $K_{n,m}$ we denote a complete bipartite graph with parts of size $n$ and $m$.
Also, $O_n$ stands for the complement of $K_n$, i.e. 
the empty (edgeless) graph with $n$ vertices, and $S_{i,j,k}$ for the tree with three 
vertices of degree 1 being of distance $i,j,k$ from the only vertex of degree 3.    

In a graph, a {\it clique} is a set of pairwise adjacent vertices and an {\it independent set}
is a set of vertices no two of which are adjacent. A graph $G$ is {\it bipartite} if  $V(G)$ 
can be partitioned into at most two independent sets, and $G$ is a {\it split} graph if $V(G)$
can be partitioned into a clique and an independent set.

We say that a graph $H$ is an induced subgraph of a graph $G$ if $V(H)\subseteq V(G)$ 
and two vertices of $H$ are adjacent if and only if they are adjacent in $G$. 
If $G$ contains no induced subgraph isomorphic to $H$, we say that $G$ is $H$-free.  
Given a set $M$ of graphs, we denote by $Free(M)$ the class of graphs containing no induced subgraphs 
isomorphic to graphs in the set $M$. Clearly, for any set $M$, the class $Free(M)$ is {\it hereditary}, 
i.e. closed under taking induced subgraphs. The converse is also true: for any hereditary class $X$
there is a set $M$ such that $X=Free(M)$. Moreover, the minimal set $M$ with this property is unique.
We call $M$ the set of \textit{forbidden induced subgraphs} for the class $X$.

Given a class $X$, we write $X_n$ for the number of labelled graphs in $X$ and call $X_n$ the \textit{speed} of $X$.
The speed of hereditary classes (also known as hereditary properties\footnote{Throughout the paper, we use the two terms,
hereditary classes and hereditary properties, interchangeably.}) has been extensively studied in the literature.
In particular, paper \cite{Scheinerman} shows that the rates of the speed growth constitute discrete layers 
and distinguishes the first four of these layers: constant, polynomial, exponential and factorial.  
Independently, similar results have been obtained by Alekseev in \cite{Alekseev2}.
Moreover, Alekseev provided the first four layers with the description of all minimal classes,
i.e. he identified in each layer a family of classes every hereditary subclass of which belongs to a lower layer
(see also \cite{SpHerProp} for some more involved results). In particular, the factorial layer has 9 minimal classes, 
three of which are subclasses of bipartite graphs, three others are subclasses of co-bipartite graphs (complements of bipartite graphs) and 
the remaining three are subclasses of split graphs. The three minimal factorial classes of bipartite graphs are:
\begin{itemize}
\item $P^1=Free(K_3,K_{1,2})$, the class of graphs of vertex degree at most 1,
\item $P^2$, the class of ``bipartite complements" of graphs in $P^1$, i.e. the class of bipartite 
graphs in which every vertex has at most one non-neighbor in the opposite part, 
\item $P^3=Free(C_3,C_5,2K_2)$, the class of $2K_2$-free bipartite graphs, also known as chain 
graphs for the property that the neighborhoods of vertices in each part form a chain. 
\end{itemize}
The structure of graphs in these classes and in the related subclasses of split and co-bipartite graphs is very simple
and hence the problem of deciding whether a hereditary class has at least factorial speed of growth admits an easy 
solution. In the next section, we introduce a number of tools that can be helpful in deciding whether the speed of
a hereditary class is at most factorial.

\section{Tools}
\label{sec:tools}

\subsection{Modular decomposition}
Given a graph $G$ and a subset $U\subset V(G)$, we say that a vertex $x$ outside of $U$
{\it distinguishes} $U$ if it has both a neighbour and a non-neighbour in $U$. A proper subset 
of $V(G)$ is called a {\it module} if it is indistinguishable 
by the vertices outside of the set. A module is {\it trivial} if it consists of a single vertex. 
A graph every module of which is trivial is called {\it prime}. 

It is well-known (and not difficult to see) that a graph $G$ which is connected and co-connected
(the complement to a connected graph) admits a unique partition into maximal modules.
Moreover, for any two maximal modules $M_1$ and $M_2$, the graph $G$ contains either 
all possible edges between $M_1$ and $M_2$ or none of them. Therefore, by contracting 
each maximal module into a single vertex we obtain a graph which is prime (due to the maximality 
of the modules). This property allows a reduction of various graph problems from 
the set of all graphs in a hereditary class $X$ to prime graphs in $X$. In what follows, 
we show that the question of deciding whether a hereditary class is at most factorial 
also allows such a reduction. We start with the following technical lemma.

\begin{lemma}\label{lem:p7-1}
For any positive integers $k<n$, and $n_1,n_2,\dots,n_k$ such that $n_1+n_2+\dots +n_k=n$, 
the following inequality holds: $$k\log k+n_1\log n_1+n_2\log n_2+\ldots+n_k\log n_k\leq n\log n.$$
\end{lemma}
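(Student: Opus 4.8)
The plan is to move everything to one side and exploit that the inequality then \emph{splits over the summands}. Since $n_1+\dots+n_k=n$, subtracting $\sum_i n_i\log n_i$ from both sides turns the claim into the equivalent statement
$$\sum_{i=1}^{k} n_i\log\frac{n}{n_i}\ \ge\ k\log k .$$
I would prove this by establishing the single-term bound $n_i\log\frac{n}{n_i}\ge\log k$ for every $i$ and then summing over the $k$ indices.

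For a fixed $i$, the remaining $k-1$ parts are positive integers, so $n=n_i+\sum_{j\ne i}n_j\ge n_i+(k-1)$, whence $\frac{n}{n_i}\ge 1+\frac{k-1}{n_i}$ and thus $n_i\log\frac{n}{n_i}\ge n_i\log\!\left(1+\frac{k-1}{n_i}\right)$. Now consider $h(t)=t\log\!\left(1+\frac{k-1}{t}\right)$ for $t>0$; one checks that $h$ is non-decreasing, so from $n_i\ge 1$ we get $n_i\log\!\left(1+\frac{k-1}{n_i}\right)\ge h(1)=\log k$, which is exactly the required single-term bound. Summing over $i$ gives $\sum_i n_i\log\frac{n}{n_i}\ge k\log k$, and hence the lemma. (Incidentally this uses only $k\le n$, and it shows equality forces all $n_i=1$ and $k=n$.)

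The one genuine computation is the monotonicity of $h$: writing $x=(k-1)/t\ge 0$, its derivative has the sign of $\ln(1+x)-\frac{x}{1+x}$, which is $\ge 0$ for $x\ge 0$ — an elementary fact I would verify by noting both sides vanish at $x=0$ and comparing derivatives. I expect this, together with spotting the per-summand reduction, to be the only non-routine part; the rest is bookkeeping.

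As a calculus-free alternative, one can argue by convexity: $x\mapsto x\log x$ is convex, so $n\log n-\sum_i x_i\log x_i$ is concave on the polytope $\{x_i\ge 1,\ \sum_i x_i=n\}$ and attains its minimum at a vertex, i.e.\ at a point with $k-1$ of the $x_i$ equal to $1$ and one equal to $n-k+1$. This reduces the lemma to the two-parameter inequality $(k-1)\log n+(n-k+1)\log\frac{n}{n-k+1}\ge k\log k$, which follows from $(k-1)\log n\ge(k-1)\log k$ together with $(n-k+1)\log\frac{n}{n-k+1}\ge\log k$, the latter reducing (by the same monotonicity in $n-k+1$) to the base case $(k+1)^2\ge 4k$. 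I would present the first proof as the main one, since it is shorter and essentially self-contained.
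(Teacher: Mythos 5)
Your proof is correct, and it takes a genuinely different route from the paper's. The paper absorbs the term $k\log k$ into the sum by setting $n_0=k$ and then runs a discrete smoothing argument: from the two-term inequality $m\log m+n\log n\le (m+1)\log(m+1)+(n-1)\log(n-1)$ (valid for integers $m\ge n>1$, proved by essentially the same one-line derivative check you perform) it repeatedly shifts one unit from a smaller part to the largest part, and after $n-n_s$ steps reaches the extremal configuration $(n,1,\dots,1)$, whose value is exactly $n\log n$. You instead rewrite the claim as $\sum_i n_i\log(n/n_i)\ge k\log k$ and prove the per-term bound $n_i\log(n/n_i)\ge\log k$ via monotonicity of $t\mapsto t\log\bigl(1+\tfrac{k-1}{t}\bigr)$; this is shorter, avoids the bookkeeping of counting exchange steps, uses only $k\le n$, and yields strictness information as a by-product. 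Your convexity alternative is actually the variant closest in spirit to the paper, since both arguments identify the same extremal point with $k-1$ parts equal to $1$, but you get there by minimizing a concave function over the simplex rather than by discrete shifting. One small quibble with your parenthetical aside: equality does not force $k=n$, since for $k=1$, $n_1=n$ the inequality is also tight ($1\log 1+n\log n=n\log n$); your equality analysis is valid only for $k\ge 2$. This does not affect the proof of the lemma itself.
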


\begin{proof}
For $k=1$, the statement is trivial. Let $k>1$. The derivative of $f_a(x)=x \log x+(a-x) \log(a-x)$ is $\log x-\log (a-x)$, 
which is non-negative for $x \geq \frac{a}{2}$. In particular, this implies that for any two integers $m \geq n >1$ we have 
$f_{n+m}(m) \leq f_{n+m}(m+1)$. Hence,
\begin{equation}\label{log2:eq}
m \log m + n \log n \leq (m+1) \log (m+1) + (n-1) \log (n-1).
\end{equation}
Denote $n_0 = k$ and let $s$ be a number in $\{0,1,\ldots,k\}$ such that $n_s\ge n_i$ for all $i=0,1,\ldots,k$. 
			Applying inequality (\ref{log2:eq}) 
$(n_0-1) + \ldots + (n_{k}-1) - (n_s - 1) = n-n_s
$ 
times we obtain:

\medskip
\noindent
$n_0 \log n_0 + \ldots + n_k \log n_k \leq (n_s+n-n_s) \log (n_s + n - n_s) + 	1 \log 1 + \ldots + 1 \log 1 = n\log n.$
\end{proof}

\begin{theorem}\label{thm:modular}
Let $X$ be a hereditary class of graphs. If the number of prime $n$-vertex graphs in $X$ is  
$2^{O(n\log n)}$, then the number of all $n$-vertex graphs in $X$ is $2^{O(n\log n)}$.
\end{theorem}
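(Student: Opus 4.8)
The plan is to count $n$-vertex graphs in $X$ by organizing them according to their modular decomposition tree, and to bound the contribution of each such tree using the hypothesis on prime graphs together with the combinatorial Lemma~\ref{lem:p7-1}.

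First I would reduce to connected and co-connected graphs. An $n$-vertex graph in $X$ is either connected, or its complement is connected; moreover, if $G$ is disconnected then $G$ is determined by the (unordered) collection of its connected components, and similarly in the co-connected case. So if $g(n)$ denotes the number of labelled $n$-vertex graphs in $X$ and $h(n)$ the number of those that are connected and co-connected, a standard generating-function-type estimate shows $g(n) \le 2^{O(n\log n)}$ as soon as $h(n) \le 2^{O(n\log n)}$ — the number of ways of partitioning $\{1,\dots,n\}$ into blocks and assigning each a (rooted, here connected–co-connected) structure is itself $2^{O(n\log n)}$ by Lemma~\ref{lem:p7-1}, which is exactly the statement that $\sum_i n_i\log n_i \le n\log n$ when $\sum n_i = n$. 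So it suffices to bound $h(n)$.

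Next, for a connected and co-connected graph $G$ on $n$ vertices, I would use the unique partition into maximal modules $M_1,\dots,M_k$ described in the excerpt, where $k \ge 2$ (here $k < n$ unless every module is a singleton, but if $k=n$ the graph is prime and already counted by hypothesis; so assume $2\le k < n$). Contracting the modules yields a prime graph $G'$ on $k$ vertices, which lies in $X$ since $X$ is hereditary (pick one vertex per module). To recover $G$ I need: (i) the prime quotient $G'$ together with the labels of which vertex set forms which module — at most $k^{O(k)} = 2^{O(k\log k)}$ choices for the quotient structure once the partition is fixed, times the number of partitions of an $n$-set into $k$ nonempty labelled blocks of sizes $n_1,\dots,n_k$; and (ii) for each module $M_i$, the induced subgraph $G[M_i] \in X$ on $n_i$ vertices, which is a graph in $X$ and can be counted recursively by $g(n_i)$. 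Writing $C$ for the constant with $h(m), g(m) \le 2^{Cm\log m}$ to be verified by induction, the count is at most $\sum_{k,n_1,\dots,n_k} (\text{partitions}) \cdot 2^{O(k\log k)} \cdot \prod_i 2^{C n_i \log n_i}$, and Lemma~\ref{lem:p7-1} gives $k\log k + \sum_i n_i \log n_i \le n\log n$, which — absorbing the polynomial-in-$n$ number of size-vectors and the multinomial factor, both of which are $2^{O(n\log n)}$ — closes the induction for a suitable $C$.

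The main obstacle will be bookkeeping the induction cleanly: one must check that the implied constants from the hypothesis on prime graphs and from the $k^{O(k)}$ factor for reconstructing the quotient's adjacencies can be absorbed into a single constant $C$ that works for all $n$ simultaneously, i.e. that the recursion $2^{Cn\log n} \gtrsim (\text{poly}(n))\cdot 2^{O(n\log n)} \cdot \max \prod 2^{Cn_i\log n_i}$ with $\sum n_i \le n$ and each $n_i \le n-1$ indeed stabilizes — this is where the strict inequality $n_i < n$ (equivalently $k \ge 2$) is essential, since it provides the "room" $n\log n - (k\log k + \sum n_i\log n_i) \ge 0$ from Lemma~\ref{lem:p7-1} plus the genuine gain coming from splitting into at least two parts. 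The disconnected/non-co-connected base reduction of the first paragraph is then handled by exactly the same Lemma~\ref{lem:p7-1} bookkeeping.
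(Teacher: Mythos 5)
Your overall strategy (modular decomposition plus Lemma~\ref{lem:p7-1}) is the right one, but the step on which everything hinges is exactly the one you wave through, and the justification you give for it is not valid. In your recursion for a connected, co-connected, non-prime graph you bound the count by
$\sum_{k,\,n_1+\dots+n_k=n}\binom{n}{n_1,\dots,n_k}\,2^{O(k\log k)}\prod_i 2^{Cn_i\log n_i}$
and then claim the multinomial factor, "being $2^{O(n\log n)}$", can be absorbed so that Lemma~\ref{lem:p7-1} closes the induction. It cannot: if you only know $\binom{n}{n_1,\dots,n_k}\le 2^{n\log n}$, then already for the split into two equal halves the right-hand side is at least $2^{n\log n}\cdot 2^{C n\log(n/2)}=2^{(C+1)n\log n - Cn}$, which exceeds the target $2^{Cn\log n}$ for every constant $C$; Lemma~\ref{lem:p7-1} offers slack only of size $n\log n-\sum n_i\log n_i$ (as small as $O(\log n)$ for the split $(n-1,1)$), never an extra $\Theta(n\log n)$. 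To make a counting induction work you must use the refined estimate $\binom{n}{n_1,\dots,n_k}\le 2^{\,n\log n-\sum_i n_i\log n_i+O(\log n)}$ (Stirling/entropy), so that the $-\sum n_i\log n_i$ cancels against the recursive cost $C\sum n_i\log n_i$, and even then the leftover additive $O(\log n)$ per step and the $2^{n-1}$ (not polynomial, as you state) choices of size vectors leave an excess that has to be beaten by summing the per-composition slack carefully or by strengthening the induction hypothesis. None of this bookkeeping is in your sketch, and it is precisely the content of the theorem; the same issue already affects your first-paragraph reduction to connected and co-connected graphs, where Lemma~\ref{lem:p7-1} alone says nothing about the number of ways to distribute the labels among the blocks.

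The paper sidesteps this entirely by proving the bound via an explicit injective encoding rather than a counting recursion: each graph is mapped to the word obtained by traversing its modular decomposition tree, where every internal node with $k$ children is charged at most $(c+2)k\log k$ symbols (the code of the prime quotient, with $O_k$ and $K_k$ adjoined to the prime graphs for the disconnected and co-disconnected cases) and the vertex labels are paid exactly once, $\log n$ bits per leaf. Lemma~\ref{lem:p7-1} then telescopes the internal costs to $(c+2)n\log n+n$, giving codes of length $O(n\log n)$ and hence $|X_n|=2^{O(n\log n)}$ with no multinomial ever appearing. If you want to keep your counting formulation, the cleanest repair is to import that idea: either switch to the encoding argument, or carry out the entropy estimate above together with a careful summation over compositions; as written, your proof has a genuine gap at its central inequality.
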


\begin{proof}
For convenience, let us extend the notion of prime graphs by including in it all complete and all empty graphs. 
For each $n>2$, this extension adds to the set of prime graphs just two graphs, so we may assume that
the number of prime graphs in our class is at most $2^{cn\log n}$ for a constant $c>0$. 

For $n\ge 2$, let $f_n$ be an injection from the set of prime $n$-vertex graphs in $X$ to the binary sequences of length at most $cn\log n$. 
For each prime graph $P \in X$ on $n\ge 2$ vertices, let $f(P)=|n^{bin}|f_n(P)|$, where $n^{bin}$ is the binary expression of $n$. 
Thus, $f$ is an injection from the set of prime graphs in $X$ to the set of ternary words 
(i.e. words in the alphabet of three symbols $\{0,1,|\}$). For each $n$-vertex prime graph $P$ in $X$ the length of the word $f(P)$ 
is at most $cn\log n+\log n+3$. Observe that $cn\log n+\log n+3\leq (c+2)n\log n$ for $n\geq 2$. 
Therefore, each $n$-vertex prime graph in $X$ is represented by a ternary word of length at most $(c+2)n\log n$ for $n\geq2$. 
We claim that all the graphs in $X_n$ can be represented by different ternary words of length at most $(c+3)n\log n+n$.

Given a graph $G\in X_n$ we construct a modular decomposition tree $T$ of $G$ in which each node $x$  
corresponds to an induced subgraph of $G$, denoted $G_x$, and has a label, denoted $L_x$.
For the root, we define $G_x=G$. To define the children of $x$ and its label, we proceed as follows.
\begin{itemize}
\item Assume $G_x$ has at least two vertices, then 
\begin{itemize}
\item If $G_x$ is disconnected, we decompose it into connected components, associate each connected 
component with a child of $x$, and define $L_x=f(O_k)$, where $k$ is the number of connected components.
\item  If $G_x$ is the complement to a  disconnected graph, then we decompose it into co-components
(connected components of the complement), associate each co-component with a child of $x$, and 
define $L_x=f(K_k)$, where $k$ is the number of co-components.
\item If both $G_x$ and its complement are connected, then we decompose $G$ into maximal modules, 
associate each module with a child of $x$, and define $L_x=f(G_x^*)$, where $G_x^*$ is the prime graph obtained from $G_x$
by contracting each maximal module into a single vertex. 
\end{itemize}
\item Assume $G_x$ has just one vertex, and let $j\in \{1,2,\ldots,n\}$ be 
the label of that vertex in $G$. Then we define $x$ to be a leaf in $T$ and $L_x=j^{bin}$,
where $j^{bin}$ is the binary expression of $j$ of length $\log n$. 
\end{itemize}  

If $x$ is a non-leaf node of $T$, then it has $k\ge 2$ children, in which case its label has length at most $(c+2)k\log k$.
Otherwise $x$ is a leaf and its label has length $\log n$. 
Let $f(G)$ be the concatenation of the labels of all the nodes of $T$ in the order they appear in the depth-first search 
algorithm applied to $T$. Since the labels record the number of children for each node, it is not hard to see that 
we can reconstruct the original tree $T$ from the word $f(G)$, and hence we can reconstruct the graph $G$ from $f(G)$,
i.e. $f$ is an injection.


Let us prove that the length of the word $f(G)$ is at most $(c+3)n\log n+n$. 
The leaf nodes of $T$ contribute $n\log n$ bits to $f(G)$. Now by induction on $n\ge 2$ 
we show that the remaining nodes of $T$ contribute at most $(c+2)n\log n+n$ symbols to $f(G)$.
For $n=2$, this follows from the first part of the proof.
Now assume $n>2$. Let the root of the tree $T$ have $k$ children 
corresponding to induced subgraphs $G_1,\ldots,G_k$ of $G$ of  sizes $n_1,n_2,\dots n_k$ with $n_1+n_2+\dots +n_k=n$. 
Since $n_i<n$, by the induction hypothesis the internal nodes of $T_{G_i}$ contribute at most $(c+2)n_i\log n_i+n_i$
symbols to $f(G_i)$, where $T_{G_i}$ is the subtree of $T$ rooted at the vertex corresponding to subgraph $G_i$. 
Also, the label of the root has length at most $(c+2)k\log k$. 
Clearly the set of internal (non-leaf) nodes of $T$ coincides with the union of internal nodes of $T_{G_1} \ldots T_{G_k}$ and the root of $T$.
Hence, by Lemma~\ref{lem:p7-1}, the internal nodes of $T$ contribute at most $(c+2)n\log n+n$ symbols to $f(G)$.

Since we used 3 letters to represent graphs from $X$, the number of graphs in $X_n$ is 
at most $3^{(c+3)n\log n+n}\le 3^{(c+4)n\log n}=2^{c'n\log n}$, where $c'=(c+4)\log 3$,
i.e. $|X_n|=2^{O(n\log n)}$.
\end{proof}

\begin{cor}
If the set of prime graphs in a hereditary class $X$ belongs to a class which is at most factorial, 
then $X$ is at most factorial.
\end{cor}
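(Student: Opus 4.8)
The plan is to deduce the statement directly from Theorem~\ref{thm:modular}, so the work is almost entirely a matter of correctly unpacking the hypothesis. First I would spell out what it means for the set of prime graphs in $X$ to belong to a class $Y$ which is at most factorial: every prime graph that occurs in $X$ also occurs in $Y$, and there is a constant $c>0$ with $|Y_n|\le 2^{cn\log n}$ for all $n$. Note that we really do need to phrase the hypothesis through an ambient class $Y$ rather than through the set of prime graphs in $X$ itself, since deleting a vertex from a prime graph need not leave a prime graph, so that set is not hereditary in general; this is the only conceptual subtlety worth flagging.

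Next I would pass to labelled graphs, which is the level at which \emph{speed} and the property ``at most factorial'' are defined. For each $n$, every labelled prime $n$-vertex graph occurring in $X$ is in particular a labelled $n$-vertex graph occurring in $Y$. Hence the number of prime $n$-vertex graphs in $X$ is at most $|Y_n|\le 2^{cn\log n}=2^{O(n\log n)}$.

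This is exactly the hypothesis of Theorem~\ref{thm:modular}, so I would invoke that theorem to conclude that the number of \emph{all} $n$-vertex graphs in $X$ is $2^{O(n\log n)}$; by definition this says that $X$ is at most factorial, completing the proof. There is no genuine obstacle here: the content is in Lemma~\ref{lem:p7-1} and Theorem~\ref{thm:modular}, and the corollary is just the observation that a factorial bound on a superclass of the prime graphs feeds that machine. The only point requiring a moment's care is the bookkeeping that the inclusion of prime graphs is read at the level of labelled graphs so that the counting inequality $|{\rm prime\ } X_n|\le |Y_n|$ is literally valid.
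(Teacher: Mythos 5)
Your proposal is correct and matches the paper's intent exactly: the corollary is stated as an immediate consequence of Theorem~\ref{thm:modular}, obtained precisely by bounding the number of labelled prime $n$-vertex graphs in $X$ by $|Y_n|=2^{O(n\log n)}$ and feeding this into the theorem. Your remarks on the ambient class $Y$ and on reading the inclusion at the level of labelled graphs are sensible bookkeeping but introduce nothing beyond the paper's one-line deduction.
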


\subsection{Functional vertices}
In this section, we introduce one more tool which is helpful in deciding 
whether a given class of graphs is factorial or not. Recall that by 
$m(x,y)$ we denote the element of the adjacency matrix corresponding 
to vertices $x$ and $y$.

\begin{definition}
For a graph $G=(V,E)$, we say that a vertex $y \in V$ is a function of a set of vertices $x_1, \ldots, x_k \in V$ 
if there exists a Boolean function $f: B^k\to B$ of $k$ variables such that for any vertex $z \in V \setminus \{y, x_1, \ldots, x_k\}$,
$$
m(y,z)=f(m(x_1,z),\ldots,m(x_k,z)).
$$ 
\end{definition}

\begin{theorem}\label{thm:function}
Let $X$ be a hereditary class of graphs and $c$ be a constant. If for every graph $G$ in $X$
there is a vertex $y$ and two disjoint sets $U$ and $R$ of at most $c$ vertices such that 
$y$ is a function of $U$ in the graph $G \setminus R$, then $|X_n|=2^{O(n\log n)}$.
\end{theorem}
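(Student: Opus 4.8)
The plan is to construct, for a constant $C=C(c)$, an injective map from $X_n$ into the set of binary strings of length at most $C\,n\log n$, obtained by repeatedly deleting one ``functional'' vertex and recording the small amount of information needed to restore it. Heredity is what makes the recursion work: if $G\in X$ and $y$ is a vertex provided by the hypothesis, then the induced subgraph $G\setminus\{y\}$ again lies in $X$, so the same structure is available at the next step.

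Fix $n$ and put $\ell=\lceil\log n\rceil$; every vertex label occurring in any graph below lies in $\{1,\dots,n\}$ and hence fits into $\ell$ bits. For a graph $G$ with vertex set $V\subseteq\{1,\dots,n\}$ I define $\mathrm{code}(G)$ recursively. If $V=\emptyset$, output the empty string. Otherwise apply the hypothesis to get a vertex $y$, disjoint sets $U=\{x_1,\dots,x_k\}$ and $R$ with $k,|R|\le c$, and a Boolean function $f\colon B^{k}\to B$ such that $y$ is a function of $U$ in $G\setminus R$ via $f$ (we may assume $y\notin U$); then output, in a fixed order: the integers $|U|$ and $|R|$, each in $\lceil\log(c+1)\rceil$ bits; the label of $y$, then the labels of $x_1,\dots,x_k$, then the labels of the vertices of $R$, each in $\ell$ bits; the bits $m(y,x_1),\dots,m(y,x_k)$ and $m(y,r)$ for $r\in R$; the truth table of $f$, i.e.\ $2^{k}\le 2^{c}$ bits; and finally $\mathrm{code}(G\setminus\{y\})$. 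Since every graph in $X$ satisfies the hypothesis (for $|V|\le c+1$ one may simply take $R=V\setminus\{y\}$ and $U=\emptyset$), this is well defined and the recursion terminates after $n$ deletions.

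To see that $\mathrm{code}$ is injective, I describe how a decoder that knows $n$ recovers $G$. It always knows the current vertex set $V$ (initially $\{1,\dots,n\}$); it reads the fixed-length fields $|U|$ and $|R|$, and then can read off $y$, the ordered set $U$, the set $R$, the recorded adjacencies, and $f$; it recursively decodes $G\setminus\{y\}$ on the known vertex set $V\setminus\{y\}$, and finally reconstructs the row of $y$ by taking $m(y,z)$ to be the recorded bit for $z\in U\cup R$ and $m(y,z)=f\bigl(m(x_1,z),\dots,m(x_k,z)\bigr)$ for every other $z\in V$. The last formula is valid because deleting $y$ and $R$ does not change any adjacency among $x_1,\dots,x_k,z$, so the right-hand side has the same value in $G\setminus\{y\}$ as it does in $G\setminus R$, where by definition of ``$y$ is a function of $U$'' it equals $m(y,z)$. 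Hence $G$ is recovered from $\mathrm{code}(G)$, so distinct graphs get distinct codes. Two points deserve care: the order of decoding (one must rebuild $G\setminus\{y\}$ before using $f$ to fill in $y$'s row), and the fact that the vertex set at each level is never stored because it is determined by $n$ and the sequence of deleted vertices.

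Finally, the length bound. The non-recursive part of $\mathrm{code}(G)$ for a graph on $m\ge 1$ vertices has length at most $2\lceil\log(c+1)\rceil+(2c+1)\ell+2c+2^{c}=(2c+1)\ell+O(1)$, the $O(1)$ depending only on $c$; the only term that could have been expensive, the truth table of $f$, costs at most the constant $2^{c}$ because $f$ has at most $c$ arguments. Summing over the $n$ levels of the recursion gives $|\mathrm{code}(G)|\le (2c+1)\,n\lceil\log n\rceil+O(n)=O(n\log n)$, so $X_n$ injects into the binary strings of length at most $L=(2c+1)\,n\lceil\log n\rceil+O(n)$ and $|X_n|\le 2^{L+1}=2^{O(n\log n)}$. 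The whole argument is essentially bookkeeping, and there is no deep obstacle: the key realization is that ``being a function of a constant-size set'' is cheap to record — $O(\log n)$ bits to name the set, $O(1)$ bits for the Boolean function and for the exceptional adjacencies — so each of the $n$ deletion steps costs only $O(\log n)$ bits.
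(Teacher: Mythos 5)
Your proof is correct and follows essentially the same route as the paper's: repeatedly delete the functional vertex $y$, record its label, the labels of $U\cup R$ with the corresponding adjacency bits, and the truth table of $f$ (constantly many bits), then recurse on $G\setminus\{y\}$, giving an injective $O(n\log n)$-bit encoding. Your explicit decoder and the bookkeeping of $|U|$, $|R|$ are just a slightly more careful presentation of the paper's inductive description; there is no substantive difference.
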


\begin{proof} 
To prove the theorem, we will show by induction on $n$ that any $n$-vertex graph 
in this class can be described by $(2c+1)n\log n+ (2^c+2c)n$ bits.
This is clearly true for $n=1$ or $n=2$, so assume that every $(n-1)$-vertex graph 
in $X$ admits a description by a binary word of length at most $(2c+1)(n-1) \log(n-1) + (2^c+2c)(n-1)$.
Let $G$ be a graph in $X$ with $n$ vertices, $y$ a vertex in $G$ and $U= \{x_1, \ldots, x_k\}, R$ 
two sets as described in the statement of the theorem. For ease of notation we will call $y$ the functional vertex.

To obtain a description of $G$, we start by describing the label of $y$ by a binary word of length 
$\log n$. Next, we list each of the labels of the vertices in $R$, following each with a $0$ 
if $y$ is not adjacent to the vertex and a $1$ if $y$ is adjacent to the vertex. 
As there are at most $c$ vertices in $R$, this requires at most $c \log n+c$ bits. 
Next, we list each of the labels of the vertices in $U$, following each with a $0$ 
if $y$ is not adjacent to the vertex and a $1$ if $y$ is adjacent to the vertex. 
Similarly, this requires at most $c \log n+c$ bits. Then, as we know that $y$ is 
a function of the vertices in $U$ in the graph $G \setminus R$, there is a Boolean function $f$
that describes the adjacencies of the vertices in $G \setminus \{U \cup R \cup y\}$ to $y$. 
List the image of this function next. This requires at most $2^c$ bits, as there are at most $c$ vertices in $U$. 
Finally, append the description of the graph $G \setminus \{y\}$ which requires at most 
$(2c+1)(n-1)  \log(n-1)+ (2^c+2c)(n-1)$ bits by induction. So we have a description of $G$ 
by a binary word of length at most $(2c+1)  \log n+ (2^c+2c)+(2c+1)(n-1)  \log(n-1)+ (2^c+2c)(n-1)$ bits. 
Finally we see that
$$(2c+1) \log n+ (2^c+2c)+(2c+1)(n-1) \log (n-1)+ (2^c+2c)(n-1) \leq$$
$$(2c+1) \log n+ (2^c+2c)+(2c+1)(n-1) \log n+ (2^c+2c)(n-1) =$$
$$(2c+1)n \log n+ (2^c+2c)n$$ 
hence the result holds by induction.

For any two different vertices in $G$ this description can be used to identify if they are adjacent or not. 
If both vertices are different from $y$, their adjacency can be determined from the description of the graph $G \setminus \{y\}$.
Assume now that one of the vertices is $y$ and let $z$ be the other vertex. If $z\in R\cup U$, then the bit $m(y,z)$ is 
explicitly included in the description of $G$. If $z\in V(G) \setminus \{U \cup R \cup y\}$,
then $m(y,z)=f(m(x_1,z),\ldots,m(x_k,z))$. 
Note that the bits $m(x_1,z),\ldots,m(x_k,z)$ can be 
determined from the description of the graph $G \setminus \{y\}$, while the value of the function $f$ 
can be found in the description of $G$. This completes the proof of the theorem.
\end{proof}

A trivial example of a functional vertex is a vertex of bounded degree or co-degree, in which case
the set $U$ of variable vertices is empty. In this case, we can make a stronger conclusion.

\begin{lemma}\label{lem:degree}
Let $X$ be a hereditary class and $d$ a constant. If every graph in $X$ has a vertex
of degree or co-degree at most $d$, then $X$ admits an implicit representation.  
\end{lemma}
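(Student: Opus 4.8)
The plan is to build an implicit representation by peeling off low-degree (or low-co-degree) vertices one at a time and recording, for each peeled vertex, enough information to reconstruct its adjacencies to the vertices remaining at the time of its removal. First I would fix an ordering $v_1, v_2, \dots, v_n$ of $V(G)$ produced greedily: since $X$ is hereditary, every induced subgraph of $G$ lies in $X$ and hence has a vertex of degree or co-degree at most $d$; repeatedly choose such a vertex in the current graph, call it $v_n$, remove it, and recurse, so that $v_i$ has degree or co-degree at most $d$ in $G[\{v_1,\dots,v_i\}]$. I would then assign to $v_i$ a code consisting of: (a) its own index $i$ in this ordering, written in binary using $\lceil\log n\rceil$ bits; (b) one bit indicating whether it is the degree case or the co-degree case; (c) the list of indices (again $\lceil\log n\rceil$ bits each, so at most $d\lceil\log n\rceil$ bits) of its $\le d$ neighbours (resp.\ non-neighbours) among $v_1,\dots,v_{i-1}$, together with a short count of how many such indices there are. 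The total length per vertex is $O(d\log n) = O(\log n)$ since $d$ is a constant, so this is a legitimate implicit representation provided adjacency is decodable from two codes.

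Next I would check decodability. Given the codes of two vertices, read off their indices $i < j$; then $v_i$ precedes $v_j$, so in particular $v_i$ is among the vertices present when $v_j$ was removed, and $v_j$'s code lists exactly its neighbours (or exactly its non-neighbours) among $v_1,\dots,v_{j-1}$. Using $v_j$'s case-bit and the list stored in its code, we determine $m(v_i,v_j)$: if $v_j$ is in the degree case, $v_i \sim v_j$ iff $i$ appears in $v_j$'s list; if $v_j$ is in the co-degree case, $v_i \sim v_j$ iff $i$ does \emph{not} appear in the list. This is a function of the two codes alone, as required.

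The only real subtlety is bookkeeping: the index $i$ assigned to $v_i$ is an index into the \emph{peeling order}, not the original labelling, but this is harmless because the representation only needs to recover the graph up to isomorphism-preserving relabelling is \emph{not} enough — we need to recover adjacencies between the specific coded vertices, and since each vertex carries its own peeling-index and the adjacency test above refers only to these indices, consistency is automatic. I expect the main point to watch is ensuring the per-vertex code length is genuinely $O(\log n)$: the list has at most $d$ entries of $\lceil\log n\rceil$ bits plus $O(\log d)$ bits to encode the list length plus $O(1)$ control bits, totalling $O(d\log n)$, and since $d$ is a fixed constant this is $O(\log n)$. With the ordering, the codes, and the decoding rule in hand, the lemma follows; note this is strictly stronger than applying Theorem~\ref{thm:function} with $U=\varnothing$, which would only give the factorial bound $|X_n| = 2^{O(n\log n)}$.
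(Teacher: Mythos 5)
Your proposal is correct and is essentially the paper's own argument: both construct a greedy elimination ordering using heredity, record for each vertex its position in the ordering, a degree/co-degree flag, and its at most $d$ neighbours (or non-neighbours) on one side of the ordering, and decode adjacency of $v_i,v_j$ from the later vertex's list. The only difference is cosmetic (you list neighbours among predecessors rather than successors), so there is nothing further to add.
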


\begin{proof}
Since $X$ is hereditary, every graph $G$ in $X$ admits a linear order $P=(v_{i_1},\ldots,v_{i_n})$
of its vertices so that $v_{i_j}$ has degree or co-degree at most $d$ in the subgraph induced
by vertices $(v_{i_j},v_{i_{j+1}},\ldots,v_{i_n})$. Then an implicit representation for $G$ 
can be obtained by recording for each vertex $v$ its position in the linear order $P$ and
at most $d$ of its neighbours or non-neighbours among the vertices following $v$ in $P$.
One more bit is needed to indicate whether $v$ has at most $d$ neighbours or at most $d$ non-neighbours.
Clearly, this description completely defines the graph and hence provides an implicit 
representation for $G$. 
\end{proof}

The case when $y$ is a functional vertex
with $U=\{x\}$ and $f$ being a Boolean function of one variable mapping 0 to 0 and 1 to 1 
can be described as follows: $|N(x) \Delta N(y)| \leq c$, where $\Delta$ denotes the symmetric 
difference of two sets. This observation implies the following corollary which will be 
frequently used in the subsequent sections.

\begin{cor}\label{cor:Delta}
Let $X$ be a hereditary class of graphs and $c$ be a constant. 
If for every graph $G$ in $X$ there exist two vertices $x,y$ such that 
$|N(x) \Delta N(y)| \leq c$, then $X$ is at most factorial.
\end{cor}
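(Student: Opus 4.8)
\section*{Proof proposal for Corollary~\ref{cor:Delta}}

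The plan is to derive this as an immediate consequence of Theorem~\ref{thm:function}: the hypothesis $|N(x)\Delta N(y)|\le c$ is precisely what is needed to exhibit $y$ as a function of the one-element set $U=\{x\}$ once a bounded set of ``bad'' vertices has been deleted. Concretely, given a graph $G\in X$ together with (distinct) vertices $x,y$ satisfying $|N(x)\Delta N(y)|\le c$, I would put
$$R \;=\; \bigl(N(x)\,\Delta\,N(y)\bigr)\setminus\{x,y\}, \qquad U \;=\;\{x\}.$$

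First I would check the bookkeeping conditions of Theorem~\ref{thm:function}. Since $R\subseteq N(x)\Delta N(y)$ we have $|R|\le c$, and $|U|=1$; by construction $x,y\notin R$, so $U$ and $R$ are disjoint and $y\notin U\cup R$. Next I would verify the functional property in $G\setminus R$: for every vertex $z\in V(G)\setminus(\{x,y\}\cup R)$ we have $z\notin N(x)\Delta N(y)$, hence $z\in N(x)\iff z\in N(y)$, that is $m(y,z)=m(x,z)$ (deleting the vertices of $R$ does not affect adjacency between $y$ and such $z$). Thus the Boolean function $f\colon B\to B$ with $f(0)=0$ and $f(1)=1$ satisfies $m(y,z)=f(m(x,z))$ for all such $z$, which is exactly the assertion that $y$ is a function of $U$ in $G\setminus R$.

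With these verifications in hand, Theorem~\ref{thm:function}, applied with the constant $\max\{c,1\}$ (which dominates both $|R|$ and $|U|$), yields $|X_n|=2^{O(n\log n)}$, i.e.\ $X$ is at most factorial. I do not anticipate any genuine obstacle here; the only points requiring a moment's care are (i) excluding $x$ and $y$ themselves from $R$ — which is why $R$ is defined with $\{x,y\}$ removed, so that $x$ remains available as the variable vertex and $y$ as the functional vertex even when $x$ and $y$ are adjacent (in which case $x,y\in N(x)\Delta N(y)$) — and (ii) the harmless replacement of $c$ by $\max\{c,1\}$ to cover the degenerate case $c=0$.
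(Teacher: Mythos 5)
Your proposal is correct and follows exactly the paper's route: the observation preceding Corollary~\ref{cor:Delta} is precisely that $|N(x)\Delta N(y)|\le c$ means $y$ is a function of $U=\{x\}$ (via the identity Boolean function) in $G\setminus R$ with $R\subseteq N(x)\Delta N(y)$ of size at most $c$, after which Theorem~\ref{thm:function} gives the factorial bound. Your extra care in removing $x,y$ from $R$ and replacing $c$ by $\max\{c,1\}$ only makes explicit details the paper leaves implicit.
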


\subsection{Covering of graphs}

\subsubsection{Locally bounded covering}

The idea of locally bounded coverings was introduced in \cite{coverings} to study factorial 
properties of graphs. This idea can be described as follows.
 
Let $G$ be a graph. A set of graphs $H_1,\dots ,H_k$ is called a \textit{covering} of $G$ if 
the union of $H_1,\dots ,H_k$ coincides with $G$, i.e. if $V(G)=\bigcup\limits_{i=1}^k V(H_i)$ and 
$E(G)=\bigcup\limits_{i=1}^k E(H_i)$. The following result was proved in \cite{coverings}.

\begin{lemma}\label{lem:covering}
Let $X$ be a class of graphs and $c$ a constant. 
If every graph $G\in X$ can be covered by graphs from a class $Y$ with $\log Y_n=O(n\log n)$ 
in such a way that every vertex of $G$ is covered by at most $c$ graphs, then $\log X_n=O(n\log n)$.
\end{lemma}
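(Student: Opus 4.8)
The plan is to encode each graph $G \in X$ by recording a covering $H_1, \dots, H_k$ together with enough bookkeeping to reconstruct $G$ from the codes of the $H_i$. First I would fix, for each $m$, an injective coding of the graphs in $Y$ with $m$ vertices by binary words of length at most $a m \log m$ for a suitable constant $a$ (this exists since $\log Y_n = O(n\log n)$; as in the proof of Theorem~\ref{thm:modular} one may harmlessly assume the bound holds for all $m \ge 1$). Given $G \in X$ on vertex set $\{1,\dots,n\}$, take a covering by $H_1,\dots,H_k$ from $Y$ with every vertex of $G$ in at most $c$ of the $H_i$; note $\sum_{i=1}^k |V(H_i)| \le cn$, and we may discard any $H_i$ with at most one vertex, so each $|V(H_i)| \ge 2$ and in particular $k \le cn/2$. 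For each $H_i$ I would write down its list of vertex labels (each in $\log n$ bits, so $|V(H_i)|\log n$ bits for the list, preceded by $|V(H_i)|$ written in binary to delimit it), followed by the code of $H_i$ as an abstract graph on that ordered vertex set. Concatenating these $k$ blocks, separated by a marker symbol, yields a word over a ternary alphabet, and since different graphs $G$ (once we also fix a canonical choice of covering) give different words — adjacency of $u,v$ in $G$ is recoverable as "there exists $i$ with $uv \in E(H_i)$" — this is an injective code.

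The main estimate is bounding the total length. The vertex-label lists contribute at most $\sum_i |V(H_i)|\log n \le cn\log n$ bits; the length prefixes contribute $O(\log n)$ per block and hence $O(k\log n) = O(n\log n)$; the abstract-graph codes contribute at most $\sum_i a|V(H_i)|\log|V(H_i)| \le a\log n \sum_i |V(H_i)| \le acn\log n$ bits; and the $k$ markers add $k = O(n)$ symbols. Altogether the word has length $O(n\log n)$ over a fixed finite alphabet, so the number of such words, and hence $|X_n|$, is $2^{O(n\log n)}$, i.e. $\log X_n = O(n\log n)$.

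The one genuine subtlety — the step I expect to be the main obstacle — is injectivity: a covering is not canonically determined by $G$, so to get a well-defined injection I must commit to a rule that selects one covering per graph (for instance, the lexicographically smallest covering in the lexicographically smallest witnessing family, using whatever covering the hypothesis guarantees exists). Once such a rule is fixed, the map $G \mapsto (\text{its chosen covering, encoded as above})$ is injective because $G$ is recovered from the union of the $H_i$, and the counting argument goes through. Everything else is the routine length bookkeeping sketched above, parallel in spirit to the counting in the proof of Theorem~\ref{thm:modular}.
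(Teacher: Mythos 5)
Your argument is correct, and it is essentially the standard counting proof: record, for each graph $H_i$ in the covering, the list of its vertex labels plus a short code of $H_i$ as a member of $Y$, and use $\sum_i |V(H_i)|\le cn$ to bound the total length by $O(n\log n)$. Note that this paper does not actually prove Lemma~\ref{lem:covering} -- it is quoted from \cite{coverings} -- but your encoding is exactly parallel to the paper's own proof of the implicit-representation analogue, Lemma~\ref{lem:implicit}, where the indices of the covering graphs and the per-graph codes are distributed over the vertices instead of being concatenated globally. One small remark: the ``genuine subtlety'' you single out is not really an obstacle. There is no need for a canonical or lexicographically minimal covering; it suffices to fix an arbitrary covering for each $G$ (one exists by hypothesis), because the resulting word determines $G$ (as the union of the decoded $H_i$'s on the recorded label sets), and a map whose output determines its input is automatically injective. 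The only genuine hypotheses you use implicitly -- that $Y$ is closed under relabelling of vertices, so that $H_i$ can be coded via its copy on $\{1,\ldots,|V(H_i)|\}$, and that $n$ (hence $V(G)=\{1,\ldots,n\}$) is known when single-vertex covering members are discarded -- are harmless and worth stating, but they do not affect the validity of the proof.
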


Now we derive a similar result for implicit representations of graphs.

\begin{lemma}\label{lem:implicit}
Let $X$ be a class of graphs and $c$ a constant. 
If every graph $G\in X$ can be covered by graphs from a class $Y$ admitting an implicit representation  
in such a way that every vertex of $G$ is covered by at most $c$ graphs, then graphs in $X$ also 
admit an implicit representation.
\end{lemma}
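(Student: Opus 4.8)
The plan is to mimic the proof of Lemma~\ref{lem:covering}, replacing the counting bound on $Y_n$ with the existence of an implicit representation for $Y$, and then to verify that the resulting encoding of a graph $G\in X$ is still implicit — i.e. it decomposes into per-vertex codes of length $O(\log n)$ from which adjacency can be recovered. First I would fix $G\in X$ on $n$ vertices together with a covering $H_1,\dots,H_k$ by graphs from $Y$ in which every vertex lies in at most $c$ of the $H_i$. Since $|V(H_i)|\le n$ for each $i$ and each vertex of $G$ contributes to at most $c$ of the sets $V(H_i)$, we have $\sum_i |V(H_i)|\le cn$, so in particular $k\le cn$ and the number of covering graphs is linear in $n$. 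By the implicit representation of $Y$, each $H_i$ (having at most $n$ vertices) admits an assignment of binary codes of length $O(\log n)$ to its vertices such that adjacency in $H_i$ is a function of the codes of the endpoints.

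Next I would build the code of a vertex $v\in V(G)$. Let $i_1<\dots<i_t$ (with $t\le c$) be the indices of the covering graphs containing $v$. The code of $v$ consists of: its own label in $G$ (using $\log n$ bits); then, for each $j=1,\dots,t$, the pair $\bigl(i_j,\, \text{code of } v \text{ in } H_{i_j}\bigr)$, where $i_j$ is written in binary using $O(\log k)=O(\log n)$ bits and the code of $v$ in $H_{i_j}$ has length $O(\log n)$. Since $t\le c$ is a constant, the total length of the code of $v$ is $O(\log n)$, as required. We also need, once, a description of the decoding functions for the graphs $H_i$; but an implicit representation comes with a single adjacency function that works for all graphs in $Y$ (the function is a property of the class, not of the individual graph), so no per-graph overhead is needed beyond what is already in the vertex codes.

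It remains to exhibit the adjacency function for $G$. Given the codes of two distinct vertices $u,v\in V(G)$, read off from each code the sorted list of pairs $(\text{index}, \text{local code})$. The vertices $u$ and $v$ are adjacent in $G$ if and only if there is some index $i$ appearing in both lists such that $u$ and $v$ are adjacent in $H_i$; and adjacency in $H_i$ is decided by applying the implicit adjacency function of $Y$ to the two local codes that $u$ and $v$ carry for that common index $i$. This correctly computes $m_G(u,v)$ because $E(G)=\bigcup_i E(H_i)$: if $uv\in E(G)$ then $uv\in E(H_i)$ for some $i$, and both $u$ and $v$ lie in $V(H_i)$, so $i$ is a common index; conversely, adjacency in any $H_i$ forces adjacency in $G$. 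Comparing the (at most $c$) indices from each code and invoking the $Y$-adjacency function a constant number of times is a valid computation on the codes alone, so this is an implicit representation of $X$.

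The only mild subtlety — and the step I would be most careful about — is bookkeeping rather than mathematics: one must be sure that the per-vertex code genuinely stays $O(\log n)$ (it does, since $c$ is an absolute constant and $k\le cn$ so indices cost $O(\log n)$ bits), and that the adjacency function may legitimately examine the index lists of both codes and call the $Y$-decoder; this is fine because "function of the codes" allows arbitrary computation on the two codes. No counting is needed here at all — unlike Lemma~\ref{lem:covering}, the argument is purely structural.
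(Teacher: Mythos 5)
Your proposal is correct and follows essentially the same route as the paper's proof: assign to each vertex, for each of the at most $c$ covering graphs containing it, the pair (index of the covering graph, local code in that graph's implicit representation), observe $\sum_i |V(H_i)|\le cn$ so indices and local codes cost $O(\log n)$ bits, and decode adjacency by searching for a common index and applying the adjacency function of $Y$, which is sound because $E(G)=\bigcup_i E(H_i)$. Your extra remarks (storing the vertex's own label and the uniformity of the decoder for the class $Y$) are harmless additions to the same argument.
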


\begin{proof} 
Let $H_1,\ldots,H_k\in  Y$ be a covering of a graph $G\in X$ such that 
every vertex of $G$ is covered by at most $c$ graphs, where $c$ is a constant independent of $G$. 
Denote $n_i=|V(H_i)|$, $n=|V(G)|$. Then 
\begin{equation}\label{eq:1}
k\le \sum\limits_{i=1}^k n_i\le cn. 
\end{equation}

Let $\phi_i$ be an implicit representation of $H_i$, i.e. a binary word containing
for each vertex of $H_i$ a code of length $O(\log n_i)$ so that the adjacency of two vertices 
can be computed from their codes.
 
Now we construct an implicit representation of $G$ as follows. To each 
vertex $j\in V(G)$ we assign a binary word $\psi_j$ containing for each graph 
$H_i$ covering $j$ the index $i$ and the code of vertex $j$ in the representation $\phi_i$ of $H_i$.
Clearly, the adjacency of two vertices $j, k\in V(G)$ can be determined from their codes 
$\psi_j$ and $\psi_k$, because they are adjacent in $G$ if and only if there is a graph $H_i$
which covers both of them and in which these vertices are adjacent. 
Since each vertex $j\in V(G)$ is covered by at most $c$ graphs, the length of $\psi_j$ is 
at most $c\log k +\sum\limits_{i=1}^{c}O(\log n_{j_i}) = c(\log k +O(\log n))$. Together with (\ref{eq:1}) 
this implies that  $|\psi_j|=O(\log n)$
and hence $\{\psi_j\ :\ j=1,\ldots,n\}$ is an implicit representation of $G$.
\end{proof}

This result allows to conclude the existence of implicit representations for a variety of graph
properties. For instance it is known that forests admit an implicit representation (also follows
from Lemma~\ref{lem:degree} as well as from Lemma~\ref{lem:implicit}). The example of forests leads to many more 
important conclusions with the help of the notion of arboricity. The arboricity of a graph is 
the minimum number of forests into which its edges can be partitioned. Many classes of theoretical 
or practical importance have bounded arboricity and hence, by Lemma~\ref{lem:implicit}, admit implicit representations, which is 
the case for graphs of bounded vertex degree, of bounded genus, of bounded thickness and for all 
proper minor-closed graph classes.

\subsubsection{Partial covering}

One more tool was introduced in \cite{chordal-bipartite} and can be stated as follows. 

\begin{lemma}\label{lem:partial}
Let $X$ be a hereditary class. If there is a constant $d \in \mathbb{N}$ and  
a hereditary class $Y$ with at most factorial speed of growth such that 
every graph $G=(V,E) \in X$ contains a non-empty subset $A\subseteq V$ such that $G[A] \in Y$ and
each vertex $a \in A$ has either at most $d$ neighbours or at most $d$ non-neighbours in $V-A$,
then $X$ is at most factorial.
\end{lemma}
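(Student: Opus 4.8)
The plan is to peel off the set $A$ one vertex at a time, using the ``cheap'' adjacency information guaranteed by the hypothesis, and to handle the residual graph $G[V\setminus A]$ recursively. Concretely, I would prove by induction on $n=|V(G)|$ that every $G\in X$ with $n$ vertices can be encoded by a binary word of length $O(n\log n)$, with the implicit constant depending only on $d$ and on the factorial bound for $Y$. The base case $n\le 2$ is trivial. For the inductive step, fix $G=(V,E)\in X$ and let $A$ be the non-empty subset promised by the hypothesis, so that $G[A]\in Y$ and every $a\in A$ has at most $d$ neighbours \emph{or} at most $d$ non-neighbours in $V\setminus A$.

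The encoding of $G$ would consist of four blocks. First, record which labels belong to $A$; this costs at most $n$ bits (one bit per vertex), though a slightly more careful count using $\log\binom{n}{|A|}\le n$ also suffices. Second, encode $G[A]$: since $Y$ is hereditary with at most factorial speed of growth, the number of labelled $|A|$-vertex graphs in $Y$ is $2^{O(|A|\log|A|)}$, so $G[A]$ together with the identification of its $|A|$ labels can be written in $O(|A|\log n)$ bits. Third, for each $a\in A$ encode the edges between $a$ and $V\setminus A$: use one bit to say whether $a$ has at most $d$ neighbours or at most $d$ non-neighbours in $V\setminus A$, and then list the (at most $d$) labels of those neighbours (resp.\ non-neighbours), at a cost of $1+d\log n$ bits per vertex of $A$, hence $O(|A|\log n)$ in total. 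Fourth, append, by the induction hypothesis applied to the graph $G[V\setminus A]\in X$ (here heredity of $X$ is used), an encoding of $G[V\setminus A]$ of length at most $c(n-|A|)\log(n-|A|)+c'(n-|A|)$ for the constants furnished by the induction. Adjacency of any two vertices is recoverable: two vertices in $V\setminus A$ from the fourth block; two vertices in $A$ from the second block; and a pair $(a,z)$ with $a\in A$, $z\in V\setminus A$ from the third block.

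Summing the four contributions gives a total of the form $O(|A|\log n)+c(n-|A|)\log(n-|A|)+c'(n-|A|)$, and one closes the induction exactly as in the proof of Theorem~\ref{thm:function}: since $(n-|A|)\log(n-|A|)\le (n-|A|)\log n$ and $|A|\ge 1$, choosing the constants $c,c'$ large enough (depending only on $d$ and on the factorial constant for $Y$) absorbs the overhead and yields a bound of $cn\log n+c'n$, i.e.\ $|X_n|=2^{O(n\log n)}$. One can phrase the whole argument slightly more slickly by invoking Lemma~\ref{lem:p7-1} to combine the sizes of the pieces, but since here we peel off $A$ in a single step rather than recursing on a partition into many parts, the one-vertex-at-a-time bookkeeping of Theorem~\ref{thm:function} is the more natural template.

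The only real subtlety — the ``hard part'' — is making sure the per-vertex cost of the third block genuinely stays $O(\log n)$ rather than $O(\log n)$ with a constant that grows with the instance: this is exactly what the uniform bound $d$ buys us, since without a fixed $d$ the list of neighbours of $a$ in $V\setminus A$ could be of unbounded length. A second, more cosmetic point is that $A$ may be all of $V$, in which case the fourth block is empty; this causes no problem because then $G=G[A]\in Y$ and the second block alone already gives an $O(n\log n)$ encoding. With these observations the induction goes through and the lemma follows.
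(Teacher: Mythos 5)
Your overall plan (peel off $A$, pay $O(\log n)$ per vertex of $A$ using the bound $d$ and the factorial bound for $Y$, recurse on $G[V\setminus A]$ by heredity of $X$) is the right one; note that the paper does not prove this lemma itself (it quotes it from the chordal bipartite paper) but proves the implicit-representation analogue, Lemma~\ref{lem:partial-implicit}, by exactly this peeling argument, storing with each vertex the index of the set $A_i$ it belongs to.

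There is, however, one genuine slip in your cost accounting, and it occurs at a point where the hypothesis gives you no slack: your first block records membership in $A$ by one bit per vertex of the \emph{current} graph, i.e.\ $n$ bits at the top level (your alternative bound $\log\binom{n}{|A|}\le n$ is no better). When you then write that the four blocks sum to ``$O(|A|\log n)+c(n-|A|)\log(n-|A|)+c'(n-|A|)$'' you are implicitly using $n=O(|A|\log n)$, which is false when $|A|$ is small; the lemma allows $|A|=1$, and in that regime the induction does not close: the recurrence $f(n)\le n+O(\log n)+f(n-1)$ gives $f(n)=\Theta(n^2)$ bits, i.e.\ only the trivial bound $2^{O(n^2)}$ on $|X_n|$. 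The repair is immediate and restores your argument verbatim: encode $A$ by a length prefix $|A|$ followed by the list of labels of its vertices, costing at most $(|A|+1)\lceil\log n\rceil$ bits, so that the entire overhead charged to this step is $O(|A|\log n)$ with a constant depending only on $d$ and the factorial constant of $Y$ (the decoder then also knows the label set of $V\setminus A$, so the recursive code of $G[V\setminus A]$ can be interpreted after the standard monotone relabelling). With that change the absorption step you describe, in the style of Theorem~\ref{thm:function}, goes through and the lemma follows; this per-vertex-of-$A$ bookkeeping is precisely the counting counterpart of the per-vertex index used in the paper's proof of Lemma~\ref{lem:partial-implicit}.
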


Now we derive a similar result for implicit representations. This result can be viewed 
as a generalization of Lemma~\ref{lem:degree}.

\begin{lemma}\label{lem:partial-implicit}
Let $X$ be a hereditary class. If there is a constant $d \in \mathbb{N}$ and  
a hereditary class $Y$ which admits an implicit representation such that 
every graph $G=(V,E) \in X$ contains a non-empty subset $A\subseteq V$ such that
$G[A] \in Y$ and each vertex $a \in A$ has either at most $d$ neighbours or at most $d$ non-neighbours in $V-A$,
then $X$ admits an implicit representation.
\end{lemma}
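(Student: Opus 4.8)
The plan is to merge the iterated ``peeling'' argument behind Lemma~\ref{lem:partial} with the code-concatenation idea of Lemma~\ref{lem:implicit}. First I would produce an ordered layer partition of $V(G)$. Let $G\in X$. By hypothesis there is a non-empty $A_1\subseteq V(G)$ with $G[A_1]\in Y$ such that every vertex of $A_1$ has at most $d$ neighbours or at most $d$ non-neighbours in $V(G)\setminus A_1$. Since $X$ is hereditary, $G-A_1\in X$, so we may apply the hypothesis again to $G-A_1$ to obtain a non-empty $A_2\subseteq V(G)\setminus A_1$, and so on. As each $A_i$ is non-empty, after $t\le n$ steps we obtain a partition $V(G)=A_1\cup A_2\cup\dots\cup A_t$ such that, for every $i$, $G[A_i]\in Y$ and every $a\in A_i$ has either at most $d$ neighbours or at most $d$ non-neighbours among $A_{i+1}\cup\dots\cup A_t$. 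For a vertex $v$ write $L(v)$ for the index of the layer containing it; note $L(v)\le n$.

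Next I would build the code of each vertex. Fix an implicit representation of $Y$: for each $i$ let $\phi_i$ be the resulting assignment of codes of length $O(\log|A_i|)=O(\log n)$ to the vertices of $G[A_i]$, together with the decoder that recovers adjacency inside $G[A_i]$ from two such codes. To a vertex $v\in V(G)$ I assign the word $c(v)$ consisting of: (i) the label of $v$ in $\{1,\dots,n\}$, written in $\lceil\log n\rceil$ bits; (ii) the layer index $L(v)$, in $\lceil\log n\rceil$ bits; (iii) the in-layer code $\phi_{L(v)}(v)$; (iv) one bit recording whether $v$ has at most $d$ neighbours or at most $d$ non-neighbours in $A_{L(v)+1}\cup\dots\cup A_t$; and (v) the list of the (at most $d$) labels of those neighbours, respectively non-neighbours, of $v$ lying in $A_{L(v)+1}\cup\dots\cup A_t$. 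Delimiting the constantly many fields in the usual way, each field has length $O(\log n)$, so $|c(v)|=O(\log n)$.

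Then I would check that adjacency of two distinct vertices $u,v$ can be read off from $c(u)$ and $c(v)$. If $L(u)=L(v)=i$, then $u$ and $v$ lie in the same layer and their adjacency in $G$ equals their adjacency in $G[A_i]$, which the decoder of the implicit representation of $Y$ computes from $\phi_i(u)$ and $\phi_i(v)$. If $L(u)\ne L(v)$, say $L(u)<L(v)$, then $v$ lies in $A_{L(u)+1}\cup\dots\cup A_t$, so the list and the bit (iv) stored in $c(u)$ determine the adjacency: when the bit says ``at most $d$ neighbours'', $u$ is adjacent to $v$ if and only if the label of $v$ occurs in the list, and when it says ``at most $d$ non-neighbours'', $u$ is adjacent to $v$ if and only if it does not occur. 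Hence $\{c(v):v\in V(G)\}$ is an implicit representation of $G$, and since $G\in X$ was arbitrary, $X$ admits an implicit representation.

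I do not expect a genuine obstacle here, as this is essentially a routine combination of two earlier arguments (indeed it subsumes Lemma~\ref{lem:degree}). The only points that need a little care are that the peeling is \emph{forward-looking} --- the degree/co-degree bound for $A_i$ constrains only the later layers $A_{i+1},\dots,A_t$, which is exactly what field (v) records --- and that a fixed implicit representation of $Y$ supplies one decoder that may be applied uniformly to every layer $G[A_i]$; both follow directly from heredity of $X$ and from the definition of an implicit representation.
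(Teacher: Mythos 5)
Your proposal is correct and follows essentially the same route as the paper's proof: iteratively peel off the sets $A_i$ using heredity of $X$, encode each $G[A_i]$ via the implicit representation of $Y$, and store for each vertex its layer index together with its at most $d$ neighbours or non-neighbours in the later layers, exactly as the paper does. The only difference is that you spell out the code fields and the neighbour/non-neighbour flag more explicitly, which the paper leaves implicit.
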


\begin{proof}
First, we represent $G[A]$ implicitly  (which is possible, because $G[A] \in Y$ and $Y$ admits an implicit representation)
and then add to the code of each vertex $v$ of $G[A]$ the list of at most $d$ neighbours or non-neighbours of $v$ in the 
rest of the graph. This describes $G[A]$ and its adjacency to the rest of the graph implicitly, 
i.e. with $O(\log n)$ bits per each vertex of $A$. Then the set $A$ can be deleted (or simply ignored) 
and the procedure can be applied to the rest of the graph, which is possible because $X$ is a hereditary class. 
Eventually, we obtain an ordered sequence of sets $A_0=A, A_1,A_2,\ldots,A_k$ ($k\le n$) such that for each $i\ge 0$, the graph $G[A_i]$ 
and its adjacency to the vertices in $A_{i+1},\ldots, A_{k}$ are described implicitly. To complete the description
of $G$, we assign to each vertex $v\in V(G)$ the index the set $A_i$ it belongs to. Now the adjacency of two vertices 
$u, v\in V(G)$ can be tested as follows: if both of them belong to the same set $A_i$, then their adjacency 
can be determined through their codes in the implicit representation of $G[A_i]$, and if $u\in A_i$ and $v\in A_j$
with $i<j$, then their adjacency can be determined by looking at the list of neighbours (or non-neighbours) of $u$ 
which is stored in the label of $u$.  
\end{proof}

\subsection{Remarks}
\label{sec:remarks}

In Theorem~\ref{thm:function}, Lemma~\ref{lem:degree}, Corollary~\ref{cor:Delta}, Lemmas~\ref{lem:partial} and~\ref{lem:partial-implicit},
to prove the corresponding statements for a class $X$, we require that {\it every} graph in $X$ has a subset of vertices (or a single vertex)
satisfying certain properties. This requirement can be relaxed if some graphs in $X$ belong to a class $Z$ that satisfy conditions 
of the corresponding statement. In this case, the existence of a subset (or a vertex) with a particular property can be required only 
for graphs in $X-Z$. For easy reading, we do not introduce this relaxation into the text of the corresponding results. But we keep it in mind
when we apply these results in the next section.


\section{Applications}
\label{sec:appl}

In this section, we apply the tools developed in the previous one in order 
to reveal new factorial classes of graphs. In some cases, we show that these 
classes admit an implicit representation. 
To simplify the study of factorial graph properties, in \cite{LMZ11} the following conjecture was proposed. 

\medskip
{\bf Conjecture on factorial properties. \it A hereditary graph property $X$ is factorial if and only if the fastest of the 
following three properties is factorial: bipartite graphs in $X$, co-bipartite graphs in $X$,
split graphs in $X$}.

\medskip
To justify this conjecture we observe that if in the text of the conjecture we replace 
the word ``factorial'' by any of the lower layers (constant, polynomial or exponential),
then the text becomes a valid statement. Also, the ``only if'' part of the conjecture is 
true, because all minimal factorial classes are subclasses of bipartite, co-bipartite 
or split graphs. Also, in \cite{LMZ11} this conjecture was verified for all hereditary 
classes defined by forbidden induced subgraphs with at most 4 vertices.   

The above conjecture reduces the question of deciding the membership in the factorial layer
from the family of all hereditary properties to those which are bipartite, co-bipartite and split. 
Taking into account the obvious relationship between bipartite, co-bipartite and split graphs,
this question can be further reduced to hereditary properties of bipartite graphs only.

When we talk about bipartite graphs, we assume that each graph is given together with 
a bipartition of its vertex set into two parts (independent sets), say top and bottom,
and we denote a bipartite graph with parts $A$ and $B$ by $G=(A,B,E)$, where $E$, as before, 
stands for the set of edges. The {\it bipartite complement} of a bipartite graph $G=(A,B,E)$
is the bipartite graph $\widetilde{G}=(A,B,E')$, where two vertices $a\in A$ and $b\in B$
are adjacent in $G$ if and only if they are not adjacent in $\widetilde{G}$. By $O_{n,m}$
we denote the bipartite complement of $K_{n,m}$.

For connected graphs, the bipartition into two independent sets is unique (up to symmetry). 
A disconnected bipartite graph can admit several different bipartitions, and this distinction 
between different bipartitions can be crucial if our graph is {\it forbidden}. Consider, for instance,
the graph $2K_{1,2}$ (a disjoint union of two copies of $K_{1,2}$). Up to symmetry, 
it admits two different bipartitions and by forbidding one of them we obtain a subclass 
of bipartite graphs which is factorial, while by forbidding the other we obtain a superfactorial
subclass of bipartite graphs. This is because the {\it bipartite complement} of one of them
does not contain any cycle, while the {\it bipartite complement} of the other contains 
a $C_4$. More generally, in \cite{Allen} the following result was proved.

\begin{theorem}\label{thm:Allen}
Let $G$ be a bipartite graph. If either $G$ or its bipartite complement contains a cycle,
then the class of $G$-free bipartite graphs is superfactorial. If both $G$ and its bipartite 
complement are acyclic and $G$ is different from $P_7$, then the class of $G$-free bipartite graphs is at most factorial. 
\end{theorem}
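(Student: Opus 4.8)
The plan is to split into two cases according to which clause of the statement we are proving, and in both cases reduce to structural facts about bipartite graphs whose bipartite complement is also acyclic (equivalently, $2K_2$-free or close to it). For the superfactorial part, suppose $G$ (or, by bipartite-complementation symmetry, $\widetilde G$) contains a cycle; since $G$ is bipartite this cycle has even length, so $G$ contains $C_{2k}$ for some $k\ge 2$, hence the class of $G$-free bipartite graphs contains all $C_{2k}$-free bipartite graphs. I would then exhibit a superfactorial family inside the $C_{2k}$-free bipartite graphs — for instance graphs of girth larger than $2k$, which by a standard construction (random bipartite graphs with few edges, or incidence graphs of generalized polygons / explicit high-girth bipartite graphs) already number $2^{\omega(n\log n)}$ on $n$ vertices. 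The complementation symmetry handles the case where it is $\widetilde G$ that has the cycle, using that $G$-free is the bipartite complement of $\widetilde G$-free.

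For the ``at most factorial'' part, assume both $G$ and $\widetilde G$ are acyclic and $G\ne P_7$. A forest whose bipartite complement is also a forest is extremely restricted: if a connected bipartite graph $H=(A,B,E)$ has $\widetilde H$ acyclic, then $\widetilde H$ has at most $|A|+|B|-1$ edges, so $|A|\cdot|B| - |E| \le |A|+|B|-1$, forcing $E$ to be nearly complete bipartite unless one side is small. Carrying this analysis through (and remembering that forbidding a \emph{disconnected} bipartite graph means forbidding it with a \emph{specified} bipartition), one finds that the connected bipartite graphs $G$ with both $G$ and $\widetilde G$ acyclic form a short explicit list — essentially small trees together with their ``bipartite-complement-forest'' partners — and the disconnected ones are unions of pieces from this list. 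The goal is then: for each such $G$ other than $P_7$, show $Free(G)\cap\{\text{bipartite}\}$ is at most factorial. I would do this by invoking the tools of Section~\ref{sec:tools}: for most $G$ on the list, every $G$-free bipartite graph has two vertices $x,y$ (in the same part) with $|N(x)\Delta N(y)|$ bounded, so Corollary~\ref{cor:Delta} applies; for the remaining $G$, one uses Lemma~\ref{lem:partial} with a suitable $Y$ (e.g.\ chain graphs $P^3$ or graphs of bounded degree) together with modular decomposition (Theorem~\ref{thm:modular}) to reduce to prime graphs.

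The main obstacle will be the case analysis producing the list of admissible $G$ and then handling the ``largest'' members of that list — the trees on $6$ and $7$ vertices and their bipartite complements, where $P_7$ itself is the known exceptional case. For a tree like $P_6$ or $S_{1,1,3}$ one must prove a genuine structural Ramsey-type statement: in a sufficiently large $G$-free bipartite graph, some pair of vertices on one side has symmetric difference of neighbourhoods bounded by an absolute constant. The proof of such a statement typically goes by assuming no such pair exists, building a long ``induced-path-like'' or ``alternating'' structure greedily, and extracting a forbidden $G$ — this is where most of the work lies, and where the exclusion of $P_7$ shows the boundary is delicate. I would organize this as a sequence of lemmas, one per hard $G$, each of the form ``every $G$-free bipartite graph with more than $f(|G|)$ vertices contains vertices $x,y$ with $|N(x)\Delta N(y)|\le c$'', and then feed each into Corollary~\ref{cor:Delta}. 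Finally, the disconnected forbidden graphs are handled by noting that forbidding $G_1\cup G_2$ is weaker than forbidding either component with the induced bipartition, so these classes contain the single-component classes and the factorial bound is inherited — modulo the $2K_{1,2}$-type subtlety about which bipartition is forbidden, which must be tracked but does not create new hard cases beyond those already on the list.
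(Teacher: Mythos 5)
First, a point of reference: the paper does not prove Theorem~\ref{thm:Allen} at all; it is quoted from Allen \cite{Allen}, so there is no internal proof to compare your argument with, and your proposal has to stand as a proof of Allen's theorem itself. The superfactorial half of your plan is sound and is the standard argument: a shortest cycle of $G$ (or of $\widetilde{G}$) is an induced $C_{2k}$, the class of $G$-free bipartite graphs then contains all bipartite graphs of girth greater than $2k$ (respectively, their bipartite complements), and taking all spanning subgraphs of an extremal bipartite graph of girth exceeding $2k$ with $n^{1+\varepsilon}$ edges yields $2^{\Omega(n^{1+\varepsilon})}$ such graphs, which is superfactorial; the bipartition bookkeeping for disconnected $G$ is a real but manageable issue.

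The second half, however, has genuine gaps. (i) The classification you lean on is not a ``short explicit list'': if $G$ has parts of sizes $a\le b$ and both $G$ and $\widetilde{G}$ are forests, then $e(G)\le a+b-1$ and $ab-e(G)\le a+b-1$, hence $(a-2)(b-2)\le 2$; besides finitely many graphs with parts $(3,3)$ or $(3,4)$, this leaves infinite families (all stars, all trees with a part of size two, their bipartite complements, and disconnected unions of such), so ``one lemma per hard $G$'' cannot be a finite enumeration and must be a collection of uniform arguments over these families. (ii) Your treatment of disconnected forbidden graphs is logically backwards: $Free(G_1)\subseteq Free(G_1\cup G_2)$, so factoriality of the component classes gives a \emph{lower} bound for the class with the disconnected forbidden graph, not the required upper bound; disconnected forbidden graphs need their own analysis, which is exactly why the present paper labours separately over graphs such as $K_{p,p}+K_1$, $P_5+K_2$ and $2K_2+O_{0,2}$. (iii) Most importantly, the structural lemmas you defer (``every $G$-free bipartite graph contains two vertices with bounded neighbourhood symmetric difference'', to be fed into Corollary~\ref{cor:Delta}) constitute essentially the entire content of the theorem; you give no indication of how to prove even one of them, and it is not obvious that this single mechanism covers every admissible $G$ --- the exceptions $S_{1,2,2}$ and $S_{1,2,3}$ in the one-sided refinement (Theorem~\ref{thm:Allen+}) show that closely related uniform statements do fail for some of these graphs, so case-specific arguments are unavoidable. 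As it stands your text is a programme whose outline matches \cite{Allen}, but the hard cases (the $(3,3)$ and $(3,4)$ sporadic graphs, the infinite families with a small part, the one-sided subtleties) are named rather than proved.
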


Moreover, for most bipartite graphs $G$ such that neither $G$ nor its bipartite complement contains a cycle,
paper \cite{Allen} proves a stronger result. To state this result, let us observe that when we say that 
a bipartite graph $G$ contains a bipartite graph $H$ as an induced subgraph, we do not specify which part 
of $H$ is mapped to which part of $G$. However, sometimes this specification is important and if all induced 
copies of $H$ appear in $G$ with all bottom parts of $H$ being in the same part of $G$, then we say that 
$H$ is contained in $G$ {\it one-sidedly}. If at least one of the two possible appearances of $H$ is missing in $G$, 
we say that $G$ contains no {\it one-sided} copy of $H$. 

\begin{theorem}\label{thm:Allen+}\cite{Allen}
If both $G$ and its bipartite complement are acyclic and $G$ is different from 
$P_7$, $S_{1,2,3}$, $S_{1,2,2}$ and from the bipartite complement of $S_{1,2,2}$, 
then the class of bipartite graphs containing no one-sided copy of $G$ is at most factorial. 
\end{theorem}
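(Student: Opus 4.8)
The plan is to prove the equivalent statement that, for every bipartite graph $G$ satisfying the hypothesis and each of the two orientations $G^{*}$ of $G$ (a choice of which part of $G$ is to play the role of the ``bottom''), the class $Free(G^{*})$ of bipartite graphs, each equipped with a bipartition, that contain no induced copy of $G$ respecting that orientation is at most factorial. This suffices because the class of bipartite graphs with no one-sided copy of $G$ is exactly the union $Free(G^{(1)})\cup Free(G^{(2)})$, a union of two at-most-factorial classes is at most factorial, and $Free(G^{(1)})$ and $Free(G^{(2)})$ are mapped onto one another by the global swap of the two sides, so it is enough to fix one orientation. I would also use throughout the map $H\mapsto\widetilde H$, which is a speed-preserving bijection on bipartite graphs carrying $Free(G^{*})$ onto $Free(\widetilde G^{*})$, so that $G$ may be replaced by its bipartite complement whenever convenient.

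The first step is a classification. If the parts of $G$ have sizes $p$ and $q$ then $|E(G)|+|E(\widetilde G)|=pq$, and since $G$ and $\widetilde G$ are forests on $p+q$ vertices this gives $pq\le 2(p+q-1)$, equivalently $(p-2)(q-2)\le 2$; hence either $\min\{p,q\}\le 2$, or $\{p,q\}\in\{\{3,3\},\{3,4\}\}$. In the second (``small'') case $G$ has at most seven vertices, and enumerating the bipartite forests on six or seven vertices, with both parts of size at least three, whose bipartite complement is also a forest yields a short list: for parts $(3,3)$ the graphs are $P_{6}$, $S_{1,2,2}$, $\widetilde{P_{6}}$ and $\widetilde{S_{1,2,2}}$, and for parts $(3,4)$ only $P_{7}$ and $S_{1,2,3}$. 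Discarding the four graphs excluded in the statement and using the complementation symmetry, the small case reduces to proving that the class of bipartite graphs with no one-sided copy of $P_{6}$ is at most factorial; since $P_{6}$ has an automorphism interchanging its two parts, this class is simply the class of $P_{6}$-free bipartite graphs, which is known to be at most factorial (one may also re-derive this via a locally bounded covering by bounded-degree graphs and chain graphs, Lemma~\ref{lem:covering}).

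It remains to treat the ``thin'' case $\min\{p,q\}\le 2$, in which $G$ may be arbitrarily large; here I fix the orientation so that a smallest part of $G$, of size $s\le 2$, is the bottom. When $s=1$ and $G$ has no isolated vertex, $G$ is a star and forbidding it forces every bottom vertex of every graph in the class to have degree at most $|V(G)|-1$, so Lemma~\ref{lem:degree} applies, in fact giving an implicit representation. In general, each vertex of $G$ outside the bottom part has one of at most $2^{s}\le 4$ adjacency patterns towards the $s$ bottom vertices, and since neither $G$ nor $\widetilde G$ contains $C_{4}$ the patterns ``adjacent to all of them'' and ``adjacent to none of them'' occur at most once each in $G$. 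Forbidding $G$ one-sidedly therefore bounds, for every ordered $s$-tuple of bottom vertices of a host graph, how many top vertices realise each pattern; a counting argument then shows that every graph in the class either contains a vertex of bounded degree or of bounded co-degree, or contains two vertices whose neighbourhoods differ in only a bounded number of vertices. Lemma~\ref{lem:degree} and Corollary~\ref{cor:Delta}, combined through the relaxation described in Section~\ref{sec:remarks}, then give the bound.

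I expect the main obstacle to be the finite ``$P_{6}$'' case, together with its complementary twin $\widetilde{P_{6}}=P_{4}+P_{2}$: these graphs lie exactly one step below the genuinely hard $P_{7}$ and $S_{1,2,3}$ in the induced-subgraph order, so no vertex of bounded degree or co-degree and no obvious small covering is available, and a self-contained proof must exhibit, for each $P_{6}$-free (equivalently, each $(P_{4}+P_{2})$-free) bipartite graph, a covering or partial covering by strictly simpler classes to which Lemma~\ref{lem:covering} or Lemma~\ref{lem:partial} applies; it is precisely the breakdown of such a description for $P_{7}$ and $S_{1,2,3}$ that accounts for their exclusion from the statement. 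A smaller difficulty is making the counting argument in the thin case uniform over the infinitely many admissible graphs $G$ with a part of size two.
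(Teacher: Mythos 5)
The first thing to say is that the paper itself contains no proof of Theorem~\ref{thm:Allen+}: it is imported verbatim from Allen's paper \cite{Allen}. So there is no internal argument to compare yours against, and a self-contained proof would essentially have to reproduce Allen's analysis.

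Judged on its own terms, your proposal is an outline rather than a proof, and the gaps sit exactly where the content of the theorem lies. The reduction to a single orientation (via the top--bottom swap and the fact that the ``no one-sided copy'' class is the union of the two oriented classes) and the classification via $|E(G)|+|E(\widetilde G)|=pq\le 2(p+q-1)$, i.e.\ $(p-2)(q-2)\le 2$, are correct, and your lists for parts $(3,3)$ and $(3,4)$ check out. But both remaining cases are discharged by assertion. In the exceptional finite case you appeal to the fact that $P_6$-free bipartite graphs are ``known to be at most factorial''; inside this paper that fact is Theorem~\ref{thm:Allen}, which is again Allen's theorem, so the appeal is circular, and the alternative you sketch (a locally bounded covering by bounded-degree graphs and chain graphs) is itself a nontrivial structural claim that you do not prove and that none of the paper's tools supplies. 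In the thin case $\min\{p,q\}\le 2$ you write that ``a counting argument then shows'' that every graph in the class has a vertex of bounded degree or co-degree, or two vertices with bounded neighbourhood symmetric difference. That dichotomy is not a routine count: for a fixed pair of bottom vertices, one-sided $G$-freeness only says that at least one of several pattern counts (common neighbours, private neighbours on either side, common non-neighbours) falls below the corresponding count in $G$, and different pairs may be deficient in different patterns, so producing a single vertex or pair with the claimed property requires a genuine structural argument, uniform over the infinitely many admissible $G$ with a part of size at most two --- which is essentially Allen's proof. As you yourself note, these are the main obstacles; until they are filled, the proposal does not establish the theorem.
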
 

According to Theorem~\ref{thm:Allen}, the class of $P_7$-free bipartite graphs is the only subclass 
of bipartite graphs defined by a single forbidden induced subgraph for which the membership in the factorial 
layer is an open question. 

To better understand the structure of $P_7$-free bipartite graphs, in the present paper
we study subclasses of this class defined by one additional forbidden induced subgraph
and prove, in particular, that for every graph $G$ with at most 6 vertices the class of 
$(P_7,G)$-free bipartite graphs is at most factorial.
 
Many of our results can be extended, with no extra work, to the more general case of 
bipartite graphs of bounded chordality, i.e. $(C_{k},C_{k+1},\ldots)$-free bipartite graphs
for a constant $k$ (the chordality of a graph is the length of a longest chordless cycle).
For $k=4$, the class of $(C_{k},C_{k+1},\ldots)$-free bipartite graphs coincides with forests and this 
class is factorial. However, for any $k>4$, the class of $(C_{k},C_{k+1},\ldots)$-free bipartite graphs 
is superfactorial. Indeed, each of these classes contain the class of $(C_6,C_8,\ldots)$-free bipartite graphs, also known as 
chordal bipartite graphs, which is a superfactorial class, as the number of $n$-vertex labelled graphs in 
this class is $2^{\Theta(n\log^{2}{n})}$ \cite{Spinrad}. Moreover, the class of chordal bipartite graphs
is not a minimal superfactorial class, which is due to the following result proved in \cite{chordal-bipartite},
where $2C_4$ denotes the disjoint union of 2 copies of $C_4$, and $2C_4+e$ is the graph obtained 
from $2C_4$ by adding one edge between the two copies of $C_4$.

\begin{lemma}
The class of $(2C_4, 2C_4+e)$-free chordal bipartite graphs is superfactorial.
\end{lemma}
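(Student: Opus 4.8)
Since the class $\mathcal{C}$ of $(2C_4,2C_4+e)$-free chordal bipartite graphs is hereditary, it suffices to exhibit, for infinitely many $n$, a family of at least $2^{\omega(n\log n)}$ pairwise distinct labelled $n$-vertex graphs that are chordal bipartite and contain no induced $2C_4$ and no induced $2C_4+e$; the plan is in fact to reach $2^{\Omega(n\log^2 n)}$, the same order of magnitude as for the whole class of chordal bipartite graphs \cite{Spinrad}. Throughout it is convenient to use the elementary fact that in a bipartite graph an induced $C_4$ is exactly an induced $K_{2,2}$ (a complete bipartite ``rectangle''); in this language the two forbidden configurations together say precisely that \emph{any two vertex-disjoint induced $K_{2,2}$'s of our graph must be joined by at least two edges}.

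The delicate point is that the two requirements pull against each other. On the one hand, every hereditary class of bipartite graphs of ``small complexity'' — forests, chain graphs (the minimal factorial class $P^3$), and the like — is only factorial; in particular a chordal bipartite graph with no induced $C_4$ has no induced cycle at all and is therefore a forest. Hence any superfactorial member of $\mathcal{C}$ must genuinely contain induced $C_4$'s. On the other hand, the $(2C_4,2C_4+e)$-free condition tends to squeeze all induced $C_4$'s into a bounded region, or into a single biclique; but deleting a bounded set from such a graph leaves a $C_4$-free chordal bipartite graph, i.e.\ a forest, and a forest together with a bounded set is again factorial. So the construction must contain \emph{many} induced $C_4$'s, genuinely spread over the graph, yet arranged so that every two vertex-disjoint ones are connected by at least two edges.

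The plan is to supply such a construction by starting from a superfactorial family of chordal bipartite graphs — Spinrad's construction witnessing $2^{\Theta(n\log^2 n)}$ chordal bipartite graphs on $n$ vertices \cite{Spinrad}, a staircase-within-staircase type family, is the natural candidate — and reorganizing it into $\Theta(\log n)$ layers designed to have three features. First, the coarse skeleton joining different layers is complete bipartite; this forces any two vertex-disjoint $K_{2,2}$'s coming from different layers to share many edges, and it also prevents a skeleton vertex from ever lying on an induced cycle of length at least $6$ (such a vertex is adjacent to an entire side of the graph, whereas a vertex on an induced $C_{\ge 6}$ must miss some cycle-vertex on the opposite side). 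Second, the adjacency prescribed inside each layer is arranged to be ``$\Gamma$-free'', so that deleting the skeleton and descending layer by layer never produces an induced $C_{\ge 6}$, and so that the $K_{2,2}$'s living within a single layer are sufficiently nested to be pairwise well-connected. Third, each layer carries $\Theta(n\log n)$ bits of free choice, so that altogether one obtains $2^{\Theta(n\log^2 n)}$ pairwise distinct labelled $n$-vertex graphs. Granting such a construction, the verification splits into: (i) chordal bipartiteness — no induced cycle of length $\ge 6$ — which by the first feature reduces to the within-layer $\Gamma$-freeness of the second; (ii) absence of induced $2C_4$ and of induced $2C_4+e$, which by the ``rectangle'' reformulation is the single claim that any two vertex-disjoint induced $K_{2,2}$'s have at least two edges between them, handled by the first feature when the two rectangles come from different layers and by the nesting in the second feature when they come from the same layer; (iii) the counting, i.e.\ checking that the parametrization produces $2^{\omega(n\log n)}$ distinct labelled graphs.

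The heart of the argument — and the step I expect to be the real obstacle — is precisely the existence of the layered construction described above: one must choose the inter-layer skeleton and the intra-layer adjacencies so that the full super-factorial amount of information is retained, while at the same time (a) no induced cycle of length $\ge 6$ is created, either inside a layer or across several layers, and (b) the induced $C_4$'s, which are now numerous and not confined to any bounded part of the graph, are nevertheless everywhere so heavily interconnected that no two vertex-disjoint ones are joined by fewer than two edges. Reconciling these three demands — richness, absence of induced $C_{\ge 6}$, and heavy interconnection of all $C_4$'s — is exactly what places $\mathcal{C}$ strictly above the factorial layer while keeping it substantially smaller than the class of all bipartite graphs; once a construction meeting all three is at hand, the estimates in (i)--(iii) are routine.
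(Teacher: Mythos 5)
Your proposal does not actually contain a proof. The lemma asserts the existence of $2^{\omega(n\log n)}$ labelled $n$-vertex graphs that are simultaneously chordal bipartite and $(2C_4,2C_4+e)$-free, and your text only lists desiderata for such a family (a complete bipartite inter-layer skeleton, $\Gamma$-free intra-layer adjacencies, $\Theta(n\log n)$ bits of free choice per layer) without exhibiting any concrete construction or verifying any of the three properties for one. You say yourself that the existence of the layered construction is ``the real obstacle'' and that everything else is routine once it is at hand; but that obstacle is precisely the content of the lemma, so what you have written is a plan, not an argument. In particular, nothing in the proposal shows that Spinrad's family witnessing $2^{\Theta(n\log^2 n)}$ chordal bipartite graphs can be ``reorganized'' so as to satisfy the very restrictive condition you correctly extract --- that every two vertex-disjoint induced $C_4$'s are joined by at least two edges --- while still retaining $2^{\omega(n\log n)}$ pairwise distinct labelled graphs and creating no induced cycle of length at least $6$; reconciling these demands is where all the work lies, and it is left entirely undone. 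The auxiliary observations (a $C_4$-free chordal bipartite graph is a forest; a vertex complete to one side cannot lie on an induced cycle of length at least $6$) are correct but only motivate the plan.

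For comparison, the paper itself does not prove this lemma: it is quoted as a result established in \cite{chordal-bipartite}, where an explicit superfactorial family of chordal bipartite graphs avoiding $2C_4$ and $2C_4+e$ is constructed and verified. An acceptable write-up here would therefore either cite (or reproduce) that construction, or give a complete construction of your own together with the verification of chordal bipartiteness, of the two exclusions, and of the counting bound; the present proposal does none of these.
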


On the other hand, most of the hereditary subclasses of chordal bipartite graphs 
studied in the literature, such as forests, bipartite permutation, convex graphs, 
are factorial. Also, several results on  factorial properties of chordal bipartite graphs 
were obtained in \cite{chordal-bipartite} and \cite{forest-chordal-bipartite}.
In particular, in \cite{forest-chordal-bipartite} the following result was proved. 

\begin{lemma}\label{lem:forest}
For any forest $F$, the class of $F$-free chordal bipartite graphs is at most factorial.
\end{lemma}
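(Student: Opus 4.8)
The plan is to combine the structure theory of chordal bipartite graphs with the reduction tools of Section~\ref{sec:tools}. First I would apply Theorem~\ref{thm:modular} to reduce the statement to prime, and in particular connected, $F$-free chordal bipartite graphs; from now on $G$ denotes such a graph and $t=|V(F)|$. It is convenient to dispose of two degenerate shapes of $F$ immediately: if $F$ is a star $K_{1,s}$, then $F$-freeness means bounded vertex degree and Lemma~\ref{lem:degree} applies, and if $F$ is edgeless, the class is finite. So I may assume $F$ contains an induced $P_4$ or an induced $2K_2$, i.e.\ $F$ is neither a star nor edgeless. The structural input I would use is the classical description of chordal bipartite graphs: the bipartite adjacency matrix of $G$ can be put, by permuting rows and columns, into $\Gamma$-free (totally balanced) form, where $\Gamma=\bigl(\begin{smallmatrix}1&1\\1&0\end{smallmatrix}\bigr)$; equivalently, $G$ admits a weakly simplicial elimination ordering $v_1,\dots,v_n$, i.e.\ an ordering such that for every $i$ the neighbourhoods of the vertices of $N(v_i)$ form a chain under inclusion inside $G[v_i,\dots,v_n]$.

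From this I would aim to prove the following dichotomy, which yields the lemma at once: there is a constant $c=c(t)$ such that every connected $F$-free chordal bipartite graph $G$ either (a) contains two vertices $x,y$ with $|N(x)\Delta N(y)|\le c$, or (b) contains a non-empty set $A\subseteq V(G)$ inducing an edgeless graph or a chain graph (both at most factorial), every vertex of which has at most $c$ neighbours or at most $c$ non-neighbours in $V(G)\setminus A$. In case (a) Corollary~\ref{cor:Delta} applies and in case (b) Lemma~\ref{lem:partial} applies; since every graph of the class satisfies (a) or (b), the two conclusions are merged using the relaxation discussed in Section~\ref{sec:remarks} (the graphs satisfying (a) form a class on which Corollary~\ref{cor:Delta} already gives the conclusion, so it suffices to verify (b) on the remaining graphs). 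Alternatively, (a) and (b) can both be phrased as the existence of a functional vertex with parameters bounded by $c$, and Theorem~\ref{thm:function} applied directly.

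To establish the dichotomy I would argue by contradiction, constructing an induced copy of $F$. Suppose $G$ is large and satisfies neither (a) nor (b); then in particular $G$ has no two vertices with equal neighbourhoods and no vertices of bounded degree or co-degree (otherwise a single such vertex, or the set of all such vertices, realises (b)). I would run the weakly simplicial elimination ordering and look at a long initial segment $v_1,\dots,v_N$ with $N$ huge compared to $t$. Applying Ramsey's theorem repeatedly, I would extract $t$ of these vertices whose ``traces'' --- the nested neighbourhoods of $v_i$ in the graph present at the time $v_i$ is eliminated, together with the way successive traces sit inside one another --- follow a single canonical pattern. These patterns correspond to a bounded list of canonical large chordal bipartite configurations: a large biclique $K_{m,m}$, a large induced matching $mK_2$, a large chain‑graph staircase, a large subdivided star, and a bounded number of similar two‑sided ``staircase'' shapes. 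For each such configuration one records exactly which forests it contains as an induced subgraph --- e.g.\ $K_{m,m}$ contains only stars, $mK_2$ contains precisely the forests of maximum degree $1$, a staircase contains precisely the $2K_2$‑free forests (one star plus isolated vertices), a large subdivided star contains $mK_2$ and $P_5$ but not $2P_3$, and so on. If $F$ occurs among them we obtain the forbidden induced $F$; if it does not, the configuration is so degenerate that it itself supplies either a near‑twin pair or a peelable simple set, contradicting our assumption.

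The main obstacle, and the bulk of the work, is exactly this last structural step: enumerating the unavoidable large sub‑configurations of a chordal bipartite graph that possesses neither near‑twins nor a peelable simple part, and checking for each that it realises every prescribed forest. The delicate point is that chordal bipartite graphs are dense in general, so one must carefully control \emph{non}-adjacencies and produce \emph{induced} (not merely subgraph) copies of $F$ --- this is where the $\Gamma$‑freeness of the adjacency matrix is used repeatedly to guarantee that the extra vertices chosen to flesh out the copy of $F$ avoid unwanted edges. A secondary technicality is to keep the case analysis uniform in $F$: having reduced to forests containing $P_4$ or $2K_2$, these are precisely the forests that fail to embed into the ``star‑like'' canonical configurations ($K_{m,m}$ and staircases) and therefore force the argument into the ``spread‑out'' configurations ($mK_2$, subdivided stars, and their relatives), where the induced copy of $F$ is built.
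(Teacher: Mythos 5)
You should first note that the paper does not prove Lemma~\ref{lem:forest} at all: it is quoted from \cite{forest-chordal-bipartite}, so there is no in-paper argument to compare yours with, and your proposal has to stand entirely on its own. As it stands it does not: it is a programme rather than a proof. The opening reductions (Theorem~\ref{thm:modular} to prime graphs, Lemma~\ref{lem:degree} for stars, the dichotomy ``near-twins or a peelable set'' feeding into Corollary~\ref{cor:Delta} and Lemma~\ref{lem:partial}) are all plausible and consistent with how this paper uses its tools, but they are the easy shell. The decisive step --- that a large prime $F$-free chordal bipartite graph with no pair $x,y$ with $|N(x)\Delta N(y)|\le c$ and no peelable set must contain one of a bounded list of canonical configurations, and that an \emph{arbitrary} forest $F$ embeds as an induced subgraph in the relevant configurations --- is exactly the content of the lemma, and you explicitly leave it as ``the main obstacle, and the bulk of the work.'' The Ramsey extraction of ``canonical trace patterns'' from a weakly simplicial elimination ordering, the claimed completeness of the list (biclique, induced matching, staircase, subdivided star, \ldots), and the construction of induced copies of a general forest inside them are all asserted, not established, so no proof of the statement has actually been given.

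Beyond incompleteness, the closing step of your intended contradiction is wrong as stated. When $F$ does not embed in the extracted configuration you conclude that ``the configuration is so degenerate that it itself supplies either a near-twin pair or a peelable simple set, contradicting our assumption.'' But twins, small symmetric differences or low degrees \emph{inside} an induced subconfiguration say nothing about the corresponding quantities in $G$: two rows of a large biclique of $G$ are twins within the biclique yet may have completely different neighbourhoods outside it, and a vertex of degree $1$ in an induced matching may have huge degree and co-degree in $G$. So no contradiction with your hypothesis on $G$ is obtained, and the dichotomy (a)/(b) is not forced. A secondary point to handle carefully if you pursue this route: Corollary~\ref{cor:Delta} and Lemma~\ref{lem:partial} are stated for \emph{every} graph of a hereditary class, and the relaxation of Section~\ref{sec:remarks} requires the exceptional graphs to lie in a (hereditary) class already known to satisfy the conclusion; the set of graphs satisfying your alternative (a) is not hereditary, so the merging of the two cases needs to be phrased through such a class rather than graph by graph.
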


This result cannot be extended to $(C_{k},C_{k+1},\ldots)$-free bipartite graphs
for $k>6$, because, for instance, $(C_{10},C_{11},\ldots)$-free bipartite graphs
contain all $P_{8}$-free bipartite graphs, which is a superfactorial class 
(as the bipartite complement of $P_8$ contains a $C_4$),
and $(C_{8},C_{10},\ldots)$-free bipartite graphs contain $P_{7}$-free bipartite graphs, 
a class for which the membership in the factorial layer is an open question.

However, for some graphs $G$ containing a cycle, it is possible to prove the membership
of $(G,C_{k},C_{k+1},\ldots)$-free bipartite graphs in the factorial layer for any value 
of $k$. For $k=6$ (i.e. for chordal bipartite graphs) several results of this type 
have been obtained in \cite{chordal-bipartite}. In Section~\ref{sec:small-chordality},
we extend these results to bipartite graphs of chordality at most $k$ for arbitrary value of $k$. 
We also obtain a number of new results for such classes. 

\medskip
In Section~\ref{sec:P7}, we restrict ourselves further and consider subclasses of $P_7$-free
bipartite graphs, which is a special case of $(C_{8},C_{10},\ldots)$-free bipartite graphs.
We systematically study subclasses of $P_7$-free bipartite graphs defined by one additional 
forbidden induced subgraph and show that for every graph $G$ with at most 6 vertices the class of 
$(P_7,G)$-free bipartite graphs is at most factorial.


\subsection{Bipartite graphs of small chordality}
\label{sec:small-chordality}


In this section, we study $(C_{k},C_{k+1},\ldots)$-free bipartite graphs.
For $k=6$, this class is known as chordal bipartite graphs and is known to be 
superfactorial \cite{Spinrad}. Therefore, bipartite graphs of chordality at most $k$ constitute a superfactorial class for all $k\ge 6$.
Various factorial properties of chordal bipartite graphs were studied in \cite{chordal-bipartite}.
In the present section, we generalize most of them to arbitrary values of $k$ and obtain 
a number of new results for such classes. We start with the following general result.

\begin{lemma}\label{lem:KSS-free}
For any natural numbers $p\ge 2$ and $k\ge 6$, the class of $(K_{p,p},C_{k},C_{k+1},\ldots)$-free bipartite graphs
admits an implicit representation and hence is at most factorial.
\end{lemma}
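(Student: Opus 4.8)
The plan is to apply the partial covering tool (Lemma~\ref{lem:partial-implicit}) with $Y$ being a class admitting an implicit representation, namely a class of graphs of bounded vertex degree (or, if that fails, bounded arboricity), and $d$ a constant depending only on $p$ and $k$. So let $G=(A,B,E)$ be a $(K_{p,p},C_k,C_{k+1},\ldots)$-free bipartite graph. The first step is to produce, inside $G$, a non-empty set $S$ of vertices which induces a graph of bounded degree and such that every vertex of $S$ has bounded degree or bounded co-degree into $V\setminus S$. A natural candidate for $S$ is a maximal set of vertices of small degree: more precisely, I would look at the vertices of $G$ of degree at most some threshold $t=t(p,k)$ and argue that either (i) there are ``many'' such vertices forming a nice induced subgraph, in which case we take $S$ to be (a subset of) them, or (ii) essentially all vertices have degree exceeding $t$, in which case we exploit the forbidden $K_{p,p}$ together with the forbidden long cycles to derive a contradiction or to find a highly structured chunk.

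The key structural input is the combination of the two hypotheses. The absence of $C_k,C_{k+1},\ldots$ means $G$ has bounded chordality: every chordless cycle has length at most $k-1$, equivalently (since $G$ is bipartite) at most $k-2$. Bounded chordality in a bipartite graph severely restricts how BFS layers can interconnect; in particular, a shortest-path/BFS argument shows that two vertices joined by an edge cannot have overly rich neighbourhood structure far apart in the graph. The absence of $K_{p,p}$ is a bound on the bipartite-Ramsey side: any two sets of size $p$ on opposite sides fail to be completely joined. Together these should be combined via an extremal argument — e.g. a Kővári–Sós–Turán type bound gives that $K_{p,p}$-freeness forces $|E(G)|=O(n^{2-1/p})$, hence the average degree is $o(n)$, so there are many low-degree vertices; then bounded chordality lets us control their mutual adjacencies and their adjacencies to the rest. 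I would iterate: peel off a bounded-degree induced subgraph $S$, apply Lemma~\ref{lem:partial-implicit}, and recurse on $G\setminus S$, which is again in the class since the class is hereditary.

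The main obstacle I anticipate is arranging that the peeled set $A$ (playing the role of $S$) simultaneously (a) induces a graph in an implicitly-representable class and (b) has the bounded-neighbourhood-into-the-rest property for \emph{every} vertex of $A$, not just on average — a vertex of small degree in $G$ automatically has small degree into $V\setminus A$, so (b) is really about the high-degree vertices we might be forced to include, and one must show $K_{p,p}$-freeness makes their \emph{co}-degree into the rest small, or else that we never need to include them. Concretely, the danger is a vertex $a$ adjacent to, say, all but a few vertices on the opposite side; but such a vertex together with any $p-1$ further near-universal vertices would create a $K_{p,p}$ unless there are fewer than $p$ of them, which is exactly the bounded-co-degree escape hatch. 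Making this dichotomy clean and uniform in $n$ — and checking the chordality bound is genuinely used to keep the induced subgraph $G[A]$ simple rather than merely sparse — is where the real work lies; once $G[A]$ is, say, a forest or a graph of bounded degree, its implicit representation is immediate (Lemma~\ref{lem:degree} or the arboricity remark following Lemma~\ref{lem:implicit}), and Lemma~\ref{lem:partial-implicit} finishes the argument.
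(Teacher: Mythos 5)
There is a genuine gap, and it sits exactly where you locate ``the real work''. Your plan needs, for every graph in the class, a non-empty set $S$ whose vertices have degree (or co-degree) bounded by a \emph{constant} $d=d(p,k)$ towards the rest of the graph; but the only quantitative input you actually invoke is the K\H{o}v\'ari--S\'os--Tur\'an bound, which gives $|E(G)|=O(n^{2-1/p})$ and hence average degree $O(n^{1-1/p})$. That is sublinear but not constant, so ``many low-degree vertices'' in your sense does not meet the hypothesis of Lemma~\ref{lem:partial-implicit} (or of Lemma~\ref{lem:degree}), whose constant $d$ must be independent of $n$. Your ``escape hatch'' dichotomy only treats near-universal vertices (those with small co-degree); it says nothing about vertices of intermediate degree, say around $n^{1-1/p}$, which KST permits in abundance --- for $p=2$ think of the incidence graph of a projective plane, which is $K_{2,2}$-free, has all degrees $\approx\sqrt{n}$, and is excluded from our class only because it contains long induced cycles. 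Proving that such intermediate-degree configurations force a long induced cycle is precisely the hard step, and your appeal to ``bounded chordality severely restricts BFS layers'' is not an argument.

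The paper closes this gap with a single imported theorem of K\"uhn and Osthus \cite{KSS}: for every graph $H$ and every $p$ there is a constant $d(H,p)$ such that any graph of average degree at least $d(H,p)$ contains $K_{p,p}$ as a subgraph or an induced subdivision of $H$. Applied with $H=C_k$, this shows every graph in the class has a vertex of degree less than $d(C_k,p)$ (a $K_{p,p}$ subgraph of a bipartite graph is automatically induced, and an induced subdivision of $C_k$ is a forbidden long induced cycle), and since the class is hereditary, Lemma~\ref{lem:degree} immediately yields the implicit representation --- no partial covering or peeling machinery is needed. So the fix for your proposal is to replace the KST estimate by this (genuinely non-elementary) degeneracy statement; once you have a constant bound on the minimum degree of every graph in the class, your recursion collapses to the standard Lemma~\ref{lem:degree} argument, and the detour through Lemma~\ref{lem:partial-implicit} becomes unnecessary.
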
 

\begin{proof}
In \cite{KSS}, it was shown that for every graph $H$ and for every natural $p$, there exists $d=d(H,p)$
such that every graph of average degree at least $d$ contains either a $K_{p,p}$ as a (not necessarily induced)
subgraph or an induced subdivision of $H$. This implies that every 
$(K_{p,p},C_{k},C_{k+1},\ldots)$-free bipartite graph $G$ contains a vertex of degree less than $d(C_{k},p)$,
since otherwise the average degree of $G$ is at least $d(C_k,p)$, in which case it must contain either 
an induced subdivision of $C_{k}$ (which is forbidden) or a $K_{p,p}$ as a subgraph (which is also forbidden,
else an induced copy of $K_{p,p}$ or $K_3$ arises). This implies, by Lemma~\ref{lem:degree}, that 
the class of $(K_{p,p},C_{k},C_{k+1},\ldots)$-free bipartite graphs
admits an implicit representation and hence is at most factorial.
\end{proof}

For $k=6$, i.e. for chordal bipartite graphs, the result of Lemma~\ref{lem:KSS-free} was derived,
by different arguments, in \cite{chordal-bipartite}. In particular, in that paper it was proved 
that $K_{p,p}$-free chordal bipartite graphs have bounded tree-width. This is a stronger conclusion
and we believe that the same conclusion holds for $K_{p,p}$-free bipartite graphs of chordality at 
most $k$ for each value of $k$. More generally we conjecture:

\medskip
{\bf Conjecture}. {\it For all $r$, $p$ and $k$, there is a $t=t(r,p,k)$ such that any $(K_r,K_{p,p})$-free graph 
of chordality at most $k$ has tree-width at most $t$.}

\medskip
We leave this conjecture for future research. In the present paper, we extend the result of Lemma~\ref{lem:KSS-free}
in a different way. In \cite{chordal-bipartite}, it was proved that the class of chordal bipartite graphs containing 
no induced $K_{p,p}+K_1$ is at most factorial by showing that every $K_{p,p}+K_1$-free bipartite graph containing 
a $K_{s,s}$ with $s=p(2^{p-1}+1)$ contains a vertex which has at most $2p-2$ non-neighbours in the opposite part.
Together with Lemma~\ref{lem:degree}, this immediately implies the following extension of Lemma~\ref{lem:KSS-free}.
   
\begin{lemma}\label{lem:KSS-free+}
For any natural $p\ge 2$ and $k\ge 6$, the class of $(K_{p,p}+K_1,C_{k},C_{k+1},\ldots)$-free bipartite graphs
admits an implicit representation and hence is at most factorial.
\end{lemma}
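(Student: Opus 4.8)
The plan is to prove the stronger fact that \emph{every} graph $G$ in the class contains a vertex that is either of bounded degree or has only a bounded number of non-neighbours in the opposite part, and then to turn this into an implicit representation by a vertex-peeling argument analogous to the one proving Lemma~\ref{lem:degree}. So fix $p\ge 2$ and $k\ge 6$, put $s=p(2^{p-1}+1)$, and let $G=(A,B,E)$ be any $(K_{p,p}+K_1,C_k,C_{k+1},\ldots)$-free bipartite graph.

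First I would split into two cases depending on whether $G$ contains a $K_{s,s}$. If $G$ is $K_{s,s}$-free, then $G$ is a $(K_{s,s},C_k,C_{k+1},\ldots)$-free bipartite graph and I would repeat the argument of Lemma~\ref{lem:KSS-free}: by the result of \cite{KSS} applied with $H=C_k$ and with $s$ in the role of $p$, any graph of average degree at least $d(C_k,s)$ contains a (not necessarily induced) $K_{s,s}$ subgraph or an induced subdivision of $C_k$; in a bipartite graph a $K_{s,s}$ subgraph is automatically induced, and an induced subdivision of $C_k$ is an induced cycle of length at least $k$, so in our $G$ both outcomes are impossible and $G$ therefore has a vertex of degree less than $d(C_k,s)$. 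If instead $G$ does contain a $K_{s,s}$ (necessarily induced, as just noted), I would invoke the result of \cite{chordal-bipartite} quoted just before the lemma: every $K_{p,p}+K_1$-free bipartite graph that contains a $K_{s,s}$ has a vertex with at most $2p-2$ non-neighbours in the opposite part. Hence, with $d$ a constant depending only on $p$ and $k$ (namely $\max\{d(C_k,s),2p-2\}$), every $G$ in the class has a vertex $v$ that is either of degree at most $d$ or has at most $d$ non-neighbours in the part not containing $v$.

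Since the class is hereditary, I would then iterate this: repeatedly delete such a vertex to get an elimination order $v_1,\dots,v_n$ in which every $v_j$ has the above property inside $G[\{v_j,\dots,v_n\}]$. The code of $v_j$ records its index $j$, a bit for which of $A,B$ contains it, a bit for which of the two types it is, and a list of at most $d$ indices from $\{j{+}1,\dots,n\}$ --- its neighbours if it is of the bounded-degree type, its non-neighbours in the opposite part otherwise --- so $O(\log n)$ bits suffice per vertex. Adjacency of $v_i,v_j$ with $i<j$ is decided from the code of $v_i$: if $v_i$ is of the bounded-degree type then $v_i\sim v_j$ iff $j$ is listed; if $v_i$ is of the other type then $v_i\not\sim v_j$ whenever $v_i,v_j$ lie in the same part, while for $v_i,v_j$ in opposite parts $v_i\sim v_j$ iff $j$ is \emph{not} listed. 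This is an implicit representation, and hence the class is at most factorial. The one point that needs care --- and the reason Lemma~\ref{lem:degree} cannot simply be cited verbatim --- is that a vertex with few non-neighbours in the opposite part has, in general, neither small degree nor small co-degree in $G$; this is why the part-membership bit must be carried through the peeling and the two types handled separately as above. Beyond that, the proof is just the combination of Lemma~\ref{lem:KSS-free}'s argument with the cited lemma of \cite{chordal-bipartite}, plus the observation that the two cases are exhaustive.
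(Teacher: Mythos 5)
Your proof is correct and follows essentially the same route as the paper, which obtains the lemma by combining the $K_{s,s}$-free case (handled exactly as in Lemma~\ref{lem:KSS-free} via the result of \cite{KSS}) with the quoted fact from \cite{chordal-bipartite} that a $K_{p,p}+K_1$-free bipartite graph containing a $K_{s,s}$ with $s=p(2^{p-1}+1)$ has a vertex with at most $2p-2$ non-neighbours in the opposite part, and then appealing to Lemma~\ref{lem:degree}. Your only deviation is that you re-run the peeling argument of Lemma~\ref{lem:degree} with an explicit part-membership bit instead of citing it verbatim; this is a legitimate, and in fact slightly more careful, reading of the paper's ``together with Lemma~\ref{lem:degree}'' step, since having few non-neighbours in the opposite part is not literally the same as having bounded co-degree.
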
    

Below we further extend this result and obtain a number of other results for 
subclasses of bipartite graphs of bounded chordality.

\subsubsection{$Q(p)$-free bipartite graphs of bounded chordality}

We denote by $Q(p)$ the graph obtained from $K_{p,p}+K_1$ by adding 
a new vertex to the smaller part of the graph and connecting it 
to every vertex in the opposite side. The graph $Q(2)$ is represented 
in Figure~\ref{fig:C}. 

\begin{figure}[ht]
\begin{center}
	\begin{tikzpicture}
  		[scale=.6,auto=left]

		\node[vertex] (x1) at (0,0)   { };
		\node[vertex] (x2) at (1,0)   { };
		\node[vertex] (x3) at (2,0)   { };
		\node[vertex] (x4) at (0,2)   { };
		\node[vertex] (x5) at (1,2)   { };
		\node[vertex] (x6) at (2,2)   { };

		\foreach \from/\to in {x1/x4,x1/x5,x2/x4,x2/x5,x1/x6,x6/x2,x6/x3}
    	\draw (\from) -- (\to);
	\end{tikzpicture}
\end{center}
\caption{Graph $Q(2)$}
\label{fig:C}
\end{figure}
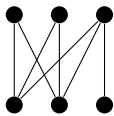

\begin{theorem}\label{thm:Qp}
For any natural $k$ and $p$, the class of $Q(p)$-free bipartite graphs of chordality at most $k$ 
admits an implicit representation and hence is at most factorial. 
\end{theorem}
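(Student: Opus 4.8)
The plan is to reduce, via the bound of \cite{KSS} already used in the proof of Lemma~\ref{lem:KSS-free}, to the case in which $G$ contains a huge biclique, then to read off a rigid structure around that biclique from $Q(p)$-freeness, and finally to invoke Lemma~\ref{lem:partial-implicit} together with the relaxation of Section~\ref{sec:remarks}. We may assume $k\ge 6$, since for smaller $k$ the class is contained in the one obtained for $k=6$. Fix a constant $t=t(p)$ to be specified and let $Z$ be the class of $(K_{t,t},C_k,C_{k+1},\dots)$-free bipartite graphs, which admits an implicit representation by Lemma~\ref{lem:KSS-free}. By the relaxation it then suffices to treat a graph $G$ that is $Q(p)$-free bipartite of chordality at most $k$ and contains an induced $K_{t,t}$.

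\textbf{The structural step.} Fix in $G$ a maximal biclique $K_{t,t}=T\times B$ with $t\ge 2p$, $T$ in the top part and $B$ in the bottom part. The key claim is the dichotomy: \emph{every bottom vertex outside $B$ either has no neighbour in $T$ or misses at most $p-1$ vertices of $T$; dually, every top vertex outside $T$ either has no neighbour in $B$ or misses at most $p-1$ vertices of $B$.} To see this, suppose a bottom vertex $w\notin B$ has a neighbour $v\in T$ and a set $Y$ of $p$ non-neighbours in $T$; then for any $X\subseteq B$ with $|X|=p$ the set $\{w,v\}\cup X\cup Y$ induces exactly $Q(p)$ — with $w$ the vertex of degree $1$, $v$ its neighbour, and $X,Y$ the two parts of the $K_{p,p}$ — a contradiction. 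The dual statement is proved the same way, now letting a top vertex $u$ play the role of $w$, a neighbour of $u$ in $B$ the role of $v$, any $p$ vertices of $T$ the role of $X$, and $p$ non-neighbours of $u$ in $B$ the role of $Y$. Consequently $V(G)$ partitions into top $=T\cup T_1\cup T_0$ and bottom $=B\cup B_1\cup B_0$, where $T_1$ (resp. $B_1$) are the vertices outside $T$ (resp. $B$) missing between $1$ and $p-1$ vertices of $B$ (resp. $T$), and $T_0$ (resp. $B_0$) those with no neighbour in $B$ (resp. $T$). In particular there is no edge between $T_0$ and $B$ or between $B_0$ and $T$, and every vertex of $T\cup T_1\cup B\cup B_1$ has degree at least $t-p+1$.

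\textbf{The easy cases.} Choose $d=d(p,k)$ suitably, with $t>d+p$. If $G$ has a vertex of degree at most $d$, let $A$ be the set of all such vertices: $G[A]$ has maximum degree at most $d$, hence admits an implicit representation, and every vertex of $A$ has at most $d$ neighbours in $V(G)\setminus A$, so Lemma~\ref{lem:partial-implicit} peels $A$ off and reduces to $G-A\in X$. If instead $T_1=B_1=\varnothing$, then by the structural step there is no edge between $T\cup B$ and $T_0\cup B_0$, so $A:=T\cup B$ induces the complete bipartite graph $K_{t,t}$ with no edge leaving it, and Lemma~\ref{lem:partial-implicit} applies with $Y$ enlarged to contain all complete bipartite graphs (an obviously implicit class).

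\textbf{The main obstacle.} The remaining case — $G$ of minimum degree larger than $d$ and $T_1\cup B_1\ne\varnothing$ — is where the real work lies. The natural candidate to peel is $A=T\cup T_1\cup B$: with the bipartition (top $T\cup T_1$, bottom $B$), every top vertex of $G[A]$ misses at most $p-1$ of $B$, so the bipartite complement of $G[A]$ has maximum degree at most $p-1$ on the top side, and hence $G[A]$ lies in the (obviously implicit) class of bipartite complements of graphs of bounded one-sided degree; moreover every vertex of $B$ has no neighbour at all in $V(G)\setminus A=T_0\cup B_1\cup B_0$, so the boundary condition of Lemma~\ref{lem:partial-implicit} is satisfied on the $B$-side. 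The trouble is that a vertex of $T$ or of $T_1$ may have many neighbours, and many non-neighbours, in $B_1\cup B_0$. Making the argument go through requires showing, using the bound $k$ on chordality (no long induced cycle may run from $T_1$ through $T$, $B$ and back via $B_0$) together with $Q(p)$-freeness (which limits how vertices of $T_1$, resp. $B_1$, may simultaneously miss parts of $B$, resp. $T$), that after possibly splitting off one further bounded-degree or complete-bipartite piece the edges between $T\cup T_1$ and $B_1\cup B_0$ become bounded in number per vertex; I would carry this out by a finite case analysis on the adjacency patterns of the intermediate vertices $T_1\cup B_1$ to $T$, $B$ and $T_0\cup B_0$. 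Establishing this last point is the step I expect to be the crux of the whole proof.
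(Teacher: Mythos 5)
Your opening reduction (via Lemma~\ref{lem:KSS-free} and the relaxation of Section~\ref{sec:remarks}) and your ``key claim'' dichotomy are exactly the paper's first step: a vertex outside the maximal biclique that has a neighbour on the opposite side of the biclique can miss at most $p-1$ vertices of that side, else a $Q(p)$ arises. But the case you label ``the main obstacle'' and leave unproved is precisely the heart of the theorem, and the paper closes it with two ideas you are missing. First, one may assume $G$ is connected (Lemma~\ref{lem:implicit}, covering by components), and then your sets $T_0$ and $B_0$ are empty: taking a vertex of $T_0\cup B_0$ closest to the biclique, its neighbour is a ``partial'' vertex of the middle, which has at least $p$ neighbours on its side of the biclique by your dichotomy, and these vertices together with $p$ biclique vertices of the other side induce a $Q(p)$ (claim (2) in the paper). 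Second, the set to peel is not $T\cup T_1\cup B$ but the middle $T_1\cup B_1$ itself: the graph $G[T_1\cup B_1]$ is $(K_{p,p}+K_1)$-free, because $p+1$ vertices of $T_1$ each missing at most $p-1$ vertices of $B$ have a common neighbour in $B$ once $|B|\ge p^2$ (this is why the paper starts from a $K_{p^2,p^2}$, not $K_{2p,2p}$), and that common neighbour would complete a $Q(p)$. Hence $G[T_1\cup B_1]$ admits an implicit representation by Lemma~\ref{lem:KSS-free+}, and each of its vertices has at most $p-1$ non-neighbours in the opposite side of what remains (the biclique), so Lemma~\ref{lem:partial-implicit} (or Lemma~\ref{lem:implicit}) finishes. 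No further use of the chordality bound, and no case analysis on adjacency patterns, is needed; indeed the quantity you propose to bound (neighbours of biclique vertices in $B_1\cup B_0$) cannot be bounded and is not needed, since the implicit description is stored on the middle vertices, not on the biclique vertices.

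Two smaller points. Your ``easy case'' step (peel all vertices of degree at most $d$ and ``reduce to $G-A\in X$'') is not a single application of Lemma~\ref{lem:partial-implicit} as stated: that lemma requires every graph of the hereditary class (outside the base class $Z$) to contain an appropriate set $A$, so the case analysis must be organized as ``every such $G$ contains a set $A$ of one of finitely many types,'' which your unresolved case prevents you from asserting. Also, a maximal complete bipartite subgraph containing the given $K_{t,t}$ need not have equal parts, so it should be described as a maximal biclique with both parts of size at least $t$ (with $t=p^2$ for the common-neighbour argument above).
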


\begin{proof}
Let $G$ be a $Q(p)$-free bipartite graph of chordality at most $k$. If $G$ contains no $K_{p^2,p^2}$,
it admits an implicit representation by Lemma~\ref{lem:KSS-free}. Therefore, we assume
that $G$ contains a $K_{p^2,p^2}$. Moreover, by Lemma~\ref{lem:implicit} we may assume that $G$ is connected.

We denote the two parts in the bipartition of $G$ by $A$ and $B$ and extend the $K_{p^2,p^2}$ contained in $G$ 
to a maximal (with respect to set inclusion) complete bipartite graph $H$ with parts $A_0 \subseteq A$ and $B_0 \subseteq B$. 
The set $A-A_0$ can further be split into the set $A_1$ of vertices that have neighbours in $B_0$ 
and the set $A_2$ of vertices that have no neighbours in $B_0$. Observe that due to the maximality of $H$
each vertex of $A_1$ has at least one non-neighbour in $B_0$. We further split $A_1$ into the 
set $A_1'$ of vertices with at most $p-1$ non-neighbours in $B_0$ and   
the set $A_1''$ of vertices with at least $p$ non-neighbours in $B_0$.
The set $B-B_0$ can be split into $B_1'$, $B_1''$ and $B_2$ analogously. 
We claim that

\begin{itemize}
\item[(1)]{ \it $A_1''=B_1''=\emptyset$.} Suppose this is not true and let $x$ be a vertex in $A_1''$
(without loss of generality). By definition $x$ must have a neighbour $y$ and $p$ non-neighbours in $B_0$.
Then these vertices together with any $p$ vertices in $A_0$ induce a $Q(p)$. 

\item[(2)]{ \it $A_2=B_2=\emptyset$.} Suppose to the contrary that $A_2$ contains a vertex $x$. 
Then because of Claim (1) and  due to the connectedness of $G$, vertex $x$ must have a neighbour $y\in B_1'$.
Since $y$ has at most $p-1$ non-neighbours in $A_0$, it has at least $p$ neighbours in $A_0$.
Then these $p$ neighbours together with $x,y$ and any $p$ vertices in $B_0$ induce a $Q(p)$. 

\item[(3)]{\it The subgraph of $G$ induced by $A_1'\cup B_1'$ is $K_{p,p}+K_1$-free}. 
Assume $G[A_1'\cup B_1']$ contains an induced $K_{p,p}+K_1$ and let, without loss of generality, the $p+1$ vertices of 
this graph belong to $A_1'$. Each of these $p+1$ vertices have at most $p-1$ non-neighbours in 
$B_0$ and since $B_0$ contains at least $p^2$ vertices we conclude that there must be a vertex 
in $B_0$ adjacent to each of the $p+1$ vertices of the copy of $K_{p,p}+K_1$. But then together (that 
vertex and the copy of $K_{p,p}+K_1$) induce a $Q(p)$ in $G$.  
\end{itemize}

Claim (3) implies by Lemma~\ref{lem:KSS-free+} that $G[A_1'\cup B_1']$ admits an implicit representation.
Besides, every vertex of $A_1'\cup B_1'$ has at most $p-1$ non-neighbours in the rest of the graph. 
Therefore, by Lemma~\ref{lem:partial-implicit} (as well as by Lemma~\ref{lem:implicit}) we conclude
that $G$ admits an implicit representation. 
\end{proof}

\subsubsection{$L(s,p)+O_{0,1}$-free bipartite graphs of bounded chordality}

By $L(s,p)$ we denote a bipartite graph obtained from $K_{2,p}$ by 
adding $s$ pendant edges to one of the vertices of degree $p$. By adding 
an isolated vertex to the bottom part of the graph, we obtain  $L(s,p)+O_{0,1}$
(see example of $L(2,2)+O_{0,1}$ in Figure~\ref{fig:L(2,2)}).

\begin{figure}[ht]
\begin{center}
	\begin{tikzpicture}
  		[scale=.6,auto=left]


		\node[vertex] (x1) at (0,0)   { };
		\node[vertex] (x2) at (1,0)   { };
		\node[vertex] (x3) at (2,0)   { };
		\node[vertex] (x4) at (3,0)   { };
		\node[vertex] (x7) at (4,0)   { };
		\node[vertex] (x5) at (2,2)   { };
		\node[vertex] (x6) at (3,2)   { };

		\foreach \from/\to in {x1/x5,x2/x5,x3/x5,x4/x5,x3/x6,x4/x6}
    	\draw (\from) -- (\to);
	\end{tikzpicture}
\end{center}
\caption{Graph $L(2,2)+O_{0,1}$}
\label{fig:L(2,2)}
\end{figure}
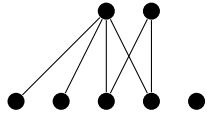

\begin{theorem}\label{thm:L}
For any natural $k,s,p$, the class of $L(s,p)+O_{0,1}$-free bipartite graphs of chordality at most $k$ 
is at most factorial.
\end{theorem}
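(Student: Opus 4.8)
The plan is to follow the pattern of the proof of Theorem~\ref{thm:Qp}. Fix a large constant $t=t(s,p,k)$. By Theorem~\ref{thm:modular} it suffices to bound the number of prime graphs in the class, so let $G=(A,B,E)$ be a prime (in particular connected, hence uniquely bipartitioned) $L(s,p)+O_{0,1}$-free bipartite graph of chordality at most $k$. If $G$ contains no $K_{t,t}$, then by Lemma~\ref{lem:KSS-free} it lies in a class already known to be at most factorial, so assume $G\supseteq K_{t,t}$ and extend this biclique to a complete bipartite subgraph $H$ with parts $A_0\subseteq A$, $B_0\subseteq B$ that is maximal under inclusion (so $|A_0|,|B_0|\ge t$). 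Partition $A\setminus A_0$ into the set $A_1$ of vertices having a neighbour in $B_0$ and the set $A_2$ of vertices having none, and partition $B\setminus B_0$ into $B_1,B_2$ analogously; by maximality of $H$, every vertex of $A_1$ has a non-neighbour in $B_0$ and every vertex of $B_1$ has a non-neighbour in $A_0$.

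The workhorse is the following \emph{Key Lemma}: in any $L(s,p)+O_{0,1}$-free bipartite graph, if $u_1,u_2$ lie in the same part with $|N(u_1)\cap N(u_2)|\ge p$ and $|N(u_1)\setminus N(u_2)|\ge s$, then $N(u_1)\cup N(u_2)$ is the entire opposite part. Indeed, otherwise pick $z$ in the opposite part non-adjacent to both $u_1,u_2$; then any $p$ common neighbours of $u_1,u_2$ (as the $w_i$'s), any $s$ neighbours of $u_1$ outside $N(u_2)$ (as the $v_j$'s), together with $u_1,u_2,z$, induce $L(s,p)+O_{0,1}$, since $z$ lies in the same part as the $w_i$'s and $v_j$'s and is thus automatically non-adjacent to them. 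This settles the case $A_2\cup B_2\ne\emptyset$ at once: if $z'\in A_2$, then for any $b,b'\in B_0$ we have $|N(b)\cap N(b')|\ge|A_0|\ge t\ge p$ while $z'\notin N(b)\cup N(b')$, so the Key Lemma forces $|N(b)\setminus N(b')|<s$ and, symmetrically, $|N(b')\setminus N(b)|<s$, hence $|N(b)\Delta N(b')|<2s$; now Corollary~\ref{cor:Delta} (with the relaxation of Section~\ref{sec:remarks}) applies. The case $B_2\ne\emptyset$ is the mirror image, using pairs inside $A_0$.

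It remains to treat $A_2=B_2=\emptyset$, where $A\setminus A_0=A_1$ and $B\setminus B_0=B_1$; this is the crux. Applying the Key Lemma to a vertex $x\in A_1$ against all of $A_0$ shows that if $x$ has at least $p$ neighbours and at least $s$ non-neighbours in $B_0$, then $B\setminus N(x)\subseteq\bigcap_{a\in A_0}N(a)=B_0$, i.e. $x$ is complete to $B_1$. Writing $A_1=A_1^{\circ}\cup A_1^{\star}$, where $A_1^{\circ}$ consists of the vertices with fewer than $p$ neighbours or fewer than $s$ non-neighbours in $B_0$, and defining $B_1^{\circ},B_1^{\star}$ symmetrically, we get that $A_1^{\star}$ is complete to $B_1$ and $B_1^{\star}$ is complete to $A_1$. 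The plan from here is to prove, with bounds depending only on $s,p,k$, that each vertex of $A_0$ has boundedly many neighbours in $B_1$ (and each vertex of $B_0$ boundedly many in $A_1$), and that $A_1^{\star}\cup B_1^{\star}$ has bounded ``trace complexity'' on the core $A_0\cup B_0$. Granting this, one applies Lemma~\ref{lem:partial} repeatedly (as in the proof of Lemma~\ref{lem:partial-implicit}), with $A^{*}$ taken to be $A_0\cup B_0$ — possibly enlarged by the peripheral vertices having boundedly many non-neighbours in the core — and with $Y$ the class of complete bipartite graphs, or in the enlarged version the class of bipartite graphs of bounded one-sided co-degree; both are obviously at most factorial, so $G$, and hence the whole class, is at most factorial.

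The main obstacle is precisely this boundedness step in the case $A_2=B_2=\emptyset$. For $k=6$ it is cheap: if a core vertex has too many peripheral neighbours, or if two peripheral vertices ``crosswise'' distinguish two core vertices, one immediately produces an induced $C_6$. For general $k$ one instead has to thread such distinguishing neighbours around the biclique to build an induced cycle of length at least $k$, which is a genuine extremal argument in the spirit of — and relying on — the KSS theorem underlying Lemma~\ref{lem:KSS-free}. I expect this to be the only delicate part; the remainder is the routine combination of the Key Lemma with Corollary~\ref{cor:Delta} and Lemma~\ref{lem:partial} exhibited above.
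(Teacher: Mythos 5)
Your ``Key Lemma'' is correct and is in fact the heart of the matter, and your treatment of the case $A_2\cup B_2\neq\emptyset$ via Corollary~\ref{cor:Delta} is sound. But the proof is not complete: in the case $A_2=B_2=\emptyset$ you only offer a plan, and you yourself identify its crucial step (bounding the number of neighbours of core vertices in $B_1$, resp.\ the ``trace complexity'' of $A_1^{\star}\cup B_1^{\star}$ on the core, for arbitrary chordality $k$) as an unproven ``genuine extremal argument''. Nothing in what you wrote supplies that step, so as it stands the argument establishes the theorem only for graphs in which some vertex misses the whole of $A_0$ or $B_0$, and leaves open exactly the configurations where every peripheral vertex meets the core. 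That is a genuine gap, not a routine verification.

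The gap is an artefact of choosing the wrong dichotomy. You split on whether $G$ contains $K_{t,t}$ and then imported the maximal-biclique machinery of Theorem~\ref{thm:Qp}; this forces you to manufacture a common non-neighbour for a pair of same-part vertices out of $A_2\cup B_2$, which may be empty. The paper instead splits on whether $G$ contains an induced $K_{2,p}+O_{0,1}$ -- i.e.\ on whether the configuration your Key Lemma needs (two same-part vertices with at least $p$ common neighbours \emph{and} a common non-neighbour) occurs at all. If it does not, then $G$ is in particular $(K_{t,t}+K_1)$-free with $t=\max\{2,p\}$, and Lemma~\ref{lem:KSS-free+} (rather than Lemma~\ref{lem:KSS-free}, which you used and which needs the stronger hypothesis of $K_{t,t}$-freeness) already gives an implicit representation. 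If it does, then the two degree-$p$ vertices $x,y$ of that copy each have at most $s-1$ private neighbours -- this is precisely your Key Lemma applied to that copy -- so $|N(x)\Delta N(y)|\le 2(s-1)$ and Corollary~\ref{cor:Delta} (with the relaxation of Section~\ref{sec:remarks}) finishes the proof. With this dichotomy no maximal biclique, no partition into $A_0,A_1,A_2$, and no boundedness step are needed, and the case that blocks you never arises; the ``$+O_{0,1}$'' in the forbidden graph buys you the common non-neighbour for free.
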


\begin{proof}
Let $G$ be an $L(s,p)+O_{0,1}$-free bipartite graph of chordality at most $k$. If $G$ contains no $K_{2,p}+O_{0,1}$, then it contains no $K_{t,t}+K_1$, where
$t = \max\{ 2, p \}$, and hence
it admits an implicit representation by Lemma~\ref{lem:KSS-free+}. Therefore, we assume
that $G$ contains an induced copy of $K_{2,p}+O_{0,1}$ and let $x,y$ be the two vertices of degree $p$ in that copy. 
Vertex $x$ cannot have $s$ or more private neighbours (i.e. neighbours which are not adjacent to $y$),
since otherwise any $s$ of these neighbours together with the $K_{2,p}+O_{0,1}$ would induce an $L(s,p)+O_{0,1}$. 
The analogous statement also holds for $y$. Therefore, $|N(x) \Delta N(y)| \leq 2(s-1)$ and hence,
by Corollary~\ref{cor:Delta}, the class of $L(s,p)+O_{0,1}$-free bipartite graphs of chordality at most $k$ 
is at most factorial. 
\end{proof}

\medskip
To conclude this section, we observe that the result of Theorem~\ref{thm:L} is best possible in the sense 
that by increasing either of the indices of the second term in the definition of the forbidden graph we obtain 
a superfactorial class. More precisely:

\begin{remark}\label{rem:1}
For any $s,p\ge 1$ and $k\ge 8$, the classes of $L(s,p)+O_{1,1}$-free, $L(s,p)+O_{0,2}$-free and $L(s,p)+O_{2,0}$-free 
bipartite graphs of chordality at most $k$ are superfactorial.
\end{remark}

This conclusion follows from the fact that $L(s,p)+O_{1,1}$ , $L(s,p)+O_{0,2}$ and $L(s,p)+O_{2,0}$, as well as all bipartite cycles of length 
more than 8 contain $\widetilde{C}_4$, and hence the corresponding classes contain all $\widetilde{C}_4$-free bipartite 
graphs, which form a superfactorial class by Theorem~\ref{thm:Allen}.

\subsubsection{$M(p)$-free bipartite graphs of bounded chordality}

By $M(p)$ we denote the graph obtained from $L(1,p)$ by adding one vertex 
which is adjacent only to the vertex of degree 1 in the $L(1,p)$.  Figure~\ref{fig:B}
represents the graph $M(3)$. 

\begin{figure}[ht]
\begin{center}
	\begin{tikzpicture}
  		[scale=.6,auto=left]

		\node[vertex] (x0) at (-1,0)  { };
		\node[vertex] (x1) at (0,0)   { };
		\node[vertex] (x2) at (1,0)   { };
		\node[vertex] (x3) at (2,0)   { };
		\node[vertex] (x4) at (0,2)   { };
		\node[vertex] (x5) at (1,2)   { };
		\node[vertex] (x6) at (2,2)   { };

		\foreach \from/\to in {x1/x4,x1/x5,x2/x4,x2/x5,x5/x3,x6/x3,x0/x4,x0/x5}
    	\draw (\from) -- (\to);
	\end{tikzpicture}
\end{center}
\caption{Graph $M(3)$}
\label{fig:B}
\end{figure}
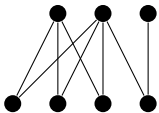

\begin{theorem}
For any natural $k$ and $p$, the class of $M(p)$-free bipartite graphs of chordality at most $k$ 
is at most factorial.
\end{theorem}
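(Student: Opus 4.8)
The plan is to follow the pattern of the proofs of Theorems~\ref{thm:Qp} and~\ref{thm:L}: pass to connected $G$ by Lemma~\ref{lem:covering}; if $G$ has no $K_{q,q}$ for a suitable constant $q=q(p)$ then it is at most factorial by Lemma~\ref{lem:KSS-free}; otherwise extend a $K_{q,q}$ to a complete bipartite subgraph $H$ with parts $A_0\subseteq A$, $B_0\subseteq B$ that is maximal under inclusion. As in Theorem~\ref{thm:Qp} I would split $A\setminus A_0$ into the set $A_1$ of vertices with a neighbour in $B_0$ and the set $A_2$ of the remaining ones, and (using that every vertex of $A_1$ has a non-neighbour in $B_0$ by maximality) split $A_1$ into the set $A_1'$ of vertices with at most $p-1$ non-neighbours in $B_0$ and the set $A_1''$ of those with at least $p$ such non-neighbours; the sets $B_1,B_2,B_1',B_1''$ are defined symmetrically.

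Next I would establish a chain of structural claims, each by exhibiting a forbidden induced $M(p)$ whose central $K_{2,p}$ is placed inside $H$ while the pendant path $x-w-u$ of $M(p)$ runs through the offending vertices:
\begin{itemize}
\item no vertex of $B_1$ has a neighbour in $A_2$ and no vertex of $A_1$ has a neighbour in $B_2$, whence $A_2\cup B_2$ would span a union of connected components and therefore $A_2=B_2=\emptyset$ by connectedness;
\item every vertex of $A_1''$ has all its neighbours in $B_0$, no vertex of $A_1$ has a neighbour in $B_1''$ (and symmetrically for $B_1''$), so every vertex of $A_1'$ has all its neighbours in $B_0\cup B_1'$;
\item for any two distinct vertices of $A_1''$, either the neighbourhood of one contains that of the other, or their symmetric difference has size at most $2(p-1)$; likewise for $B_1''$.
\end{itemize}
If at any point two vertices of $G$ have symmetric difference of size at most $2(p-1)$ then Corollary~\ref{cor:Delta} finishes the proof; otherwise the last claim says that the $B_0$-neighbourhoods of $A_1''$ and the $A_0$-neighbourhoods of $B_1''$ form chains under inclusion, and combined with ``$A_0$ is complete to $B_0$'' and ``$A_1''$ is anticomplete to $B_1''$'' this yields that $G[A_0\cup B_0\cup A_1''\cup B_1'']$, with bipartition $(A_0\cup A_1'',\,B_0\cup B_1'')$, is a chain graph, hence lies in the minimal factorial class $P^3$.

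It then remains to handle $G[A_1'\cup B_1']$. Here I would prove that it contains no $K_{p',p'}$ for a suitable $p'=p'(p)$, again by forcing an induced $M(p)$ — this time taking the $p$ leaves of $M(p)$ inside $B_1'$ (a vertex of $A_1'$ cannot miss $p$ vertices of $B_0$), obtaining the vertex playing $w$ from a vertex distinguishing two vertices of the complete bipartite subgraph, and obtaining the vertex playing $u$ by a counting argument that uses that $q$ is large — or else producing a pair of vertices with symmetric difference at most a constant; so by Lemma~\ref{lem:KSS-free} the graph $G[A_1'\cup B_1']$ lies in a factorial class. To conclude, observe that every vertex of $A_1'$ has at most $p-1$ non-neighbours in $B_0$ and none in $B_1''$ while being non-adjacent within $A$ (symmetrically for $B_1'$), so $G$ is described by the chain graph $G[A_0\cup B_0\cup A_1''\cup B_1'']$, the graph $G[A_1'\cup B_1']$, the at most $p-1$ non-neighbours in $B_0$ (resp. $A_0$) of each vertex of $A_1'$ (resp. $B_1'$), and the list of which of the six parts each vertex lies in; equivalently, $G$ is covered by $G[A_0\cup B_0\cup A_1''\cup B_1'']$ and $G[A_0\cup B_0\cup A_1'\cup B_1']$ with every vertex in at most two of them, the latter being at most factorial by Lemma~\ref{lem:partial}, so the result follows from Lemma~\ref{lem:covering}.

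The hard part will be showing that $G[A_1'\cup B_1']$ has no large complete bipartite subgraph. The analogous step for $Q(p)$ was immediate, because a $K_{p,p}+K_1$ inside $G[A_1'\cup B_1']$ together with a common neighbour in $B_0$ already creates a forbidden $Q(p)$; here $M(p)\not\subseteq Q(p)$, so one cannot reuse that trick, and the real work lies in locating the vertices playing $w$ and $u$ in the would-be $M(p)$ (which is exactly where the constant $q$ must be chosen large enough). A secondary difficulty is the chain claim for $A_1''$ and $B_1''$: unlike in the $Q(p)$ case these sets need not be empty, and it is the use of Corollary~\ref{cor:Delta} to rule out ``crossing'' pairs inside them that turns $G[A_0\cup B_0\cup A_1''\cup B_1'']$ into a chain graph.
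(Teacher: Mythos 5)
Your decomposition (connected $G$, maximal biclique $A_0,B_0$ extending a $K_{p^2,p^2}$, the splits $A_1',A_1'',A_2$ and their $B$-analogues, emptiness of $A_2\cup B_2$, and the fact that $A_1''$ resp.\ $B_1''$ see only $B_0$ resp.\ $A_0$) matches the paper, and your chain-graph treatment of $A_0\cup B_0\cup A_1''\cup B_1''$ (crossing pairs in $A_1''$ or $B_1''$ force an induced $M(p)$ unless both private neighbourhoods have size at most $p-1$, then Corollary~\ref{cor:Delta} via the remarks) looks workable, even though it replaces the paper's claim about one-sided copies in $G[A_0\cup B_1]$ and $G[B_0\cup A_1]$. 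The genuine gap is exactly the step you flag as ``the hard part'': your plan to show that $G[A_1'\cup B_1']$ contains no $K_{p',p'}$ and then apply Lemma~\ref{lem:KSS-free} cannot be carried out, because the claim is false. Take $A_0=\{a_1,\dots,a_N\}$, $B_0=\{b_1,\dots,b_N\}$ complete to each other, add $s_1,\dots,s_N$ on the $A$-side with $s_i$ adjacent to $B_0\setminus\{b_i\}$, add $t_1,\dots,t_N$ on the $B$-side with $t_j$ adjacent to $A_0\setminus\{a_j\}$, and join every $s_i$ to every $t_j$. Every vertex has at most one non-neighbour in the opposite part, so the graph is $M(p)$-free for $p\ge 2$ (the degree-one vertex of $M(p)$ needs $p$ non-neighbours among the leaves) and has chordality at most $6$; $(A_0,B_0)$ is a maximal biclique containing a $K_{p^2,p^2}$, all $s_i,t_j$ land in $A_1'\cup B_1'$, and $G[A_1'\cup B_1']=K_{N,N}$ with $N$ arbitrary. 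So the entire weight of your argument falls on the fallback branch ``or else some pair of vertices of $G$ has bounded symmetric difference'', i.e.\ on the assertion that a large biclique inside $G[A_1'\cup B_1']$ always forces such a pair; you give no argument for this dichotomy, and it is not a routine extension of the $Q(p)$ trick (as you note, $M(p)\not\supseteq$ that configuration) --- it is essentially the whole difficulty of the theorem.

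The paper avoids this issue altogether: it never tries to exclude bicliques from $G[A_1'\cup B_1']$. Instead it sets $M^*(p)=K_{1,p}+K_2$, the graph obtained from $M(p)$ by deleting its dominating vertex, and shows that $G[A_1'\cup B_1']$ is $M^*(p)$-free (any copy with its $p+1$-vertex side in $A_1'$ has a common neighbour in $B_0$, since these vertices miss at most $(p+1)(p-1)<p^2\le |B_0|$ vertices of $B_0$, and that neighbour completes an $M(p)$), and similarly that $G[A_0\cup B_1]$ and $G[B_0\cup A_1]$ contain no one-sided copy of $M^*(p)$. Factoriality of these pieces then comes from Theorem~\ref{thm:Allen+} (Allen's one-sided result for acyclic $G$ with acyclic bipartite complement), and the theorem follows from Lemma~\ref{lem:covering}. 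This is the tool missing from your proposal: $M^*(p)$-free bipartite graphs may contain arbitrarily large complete bipartite subgraphs (as the example above shows), so Lemma~\ref{lem:KSS-free} can give nothing for this piece, whereas Theorem~\ref{thm:Allen+} handles it directly. To repair your proof, replace the ``no large biclique'' step by the $M^*(p)$-freeness argument and cite Theorem~\ref{thm:Allen+}; the rest of your outline can then be kept essentially as is.
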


\begin{proof}
Let $G$ be an $M(p)$-free bipartite graph of chordality at most $k$. If $G$ contains no $K_{p^2,p^2}$,
it admits an implicit representation by Lemma~\ref{lem:KSS-free}. Therefore, we assume
that $G$ contains a $K_{p^2,p^2}$. Moreover, by Theorem~\ref{thm:modular} we may assume that $G$ is prime 
and hence is connected.
We denote the two parts in the bipartition of $G$ by $A$ and $B$ and split them into 
$A_0,A_1',A_1'',A_2$ and $B_0,B_1',B_1'',B_2$ as in Theorem~\ref{thm:Qp}.

Let $M^*(p)$ denote the subgraph of $M(p)$ obtained by deleting the vertex of degree $p+1$
(i.e. the only vertex in the smaller part which dominates the other part).
By Theorem~\ref{thm:Allen+}, 
\begin{itemize}
\item[(1)] {\it The class of bipartite graphs containing no 
one-sided copy of $M^*(p)$ is at most factorial}. 
\end{itemize}

The rest of the proof will follow from a series of claims.

\begin{itemize}
\item[(2)]{ \it $A_2=B_2=\emptyset$.} Suppose this is not true,
then as $G$ is connected there must be a vertex $x \in A_2 \cup B_2$ with a neighbour 
$y\in A_1\cup B_1$. Without loss of generality assume that $x \in A_2$, 
then $x,y,$ a neighbour and a non-neighbour of $y$ in $A_0$, and any $p$ vertices in $B_0$ 
induce an $M(p)$ in $G$.

\item[(3)]{ \it No vertex in $A_1''$ has a neighbour in $B_1$.} Indeed, if a vertex 
$x\in A_1''$ is adjacent to a vertex $y\in B_1$, then $x,y$ together with $p$ non-neighbours of $x$ in 
$B_0$, a neighbour and a non-neighbour of $y$ in $A_0$ induce an $M(p)$ in $G$.

\item[(4)]{ \it No vertex in $B_1''$ has a neighbour in $A_1$} by analogy with (3).

\item[(5)]{ \it The subgraph of $G$ induced by $A_1'$ and $B_1'$ is $M^*(p)$-free}.
Assume $G[A_1'\cup B_1']$ contains an induced $M^*(p)$ and let, without loss of generality, the $p+1$ vertices of 
this graph belong to $A_1'$. Each of this $p+1$ vertices has at most $p-1$ non-neighbours in 
$B_0$ and since $B_0$ contains at least $p^2$ vertices we conclude that there must be a vertex 
in $B_0$ adjacent to each of the $p+1$ vertices of the copy of $M^*(p)$. But then together (that 
vertex and the copy of $M^*(p)$) induce an $M(p)$ in $G$.  

\item[(6)] {\it The graphs $G[A_0 \cup B_1]$ and $G[B_0 \cup A_1]$ do not
contain a one-sided copy of $M^*(p)$}. Indeed, if, say, $G[A_0 \cup B_1]$ contains
a one-sided copy of $M^*(p)$ with $p+1$ vertices in $A_0$, then this copy together with
any vertex in $B_0$ induce an $M(p)$ in $G$. 
\end{itemize}

This structure obtained for graphs in the class of $M(p)$-free bipartite graphs 
containing a $K_{p^2,p^2}$ implies that such graphs can be covered by finitely many graphs from a finite union of classes 
with at most factorial speed of growth.
By Lemma~\ref{lem:covering} we conclude that the class of $M(p)$-free bipartite graphs of chordality at most $k$ 
is at most factorial for any values of $k$ and $p$.
\end{proof}

\subsubsection{$N(p)$-free bipartite graphs of bounded chordality}

By $N(p)$ we denote the graph $L(1,p)+O_{1,0}$, i.e. the graph obtained from $L(1,p)$ by adding an isolated vertex 
to the smaller part of graph.  Figure~\ref{fig:N(p)}
represents the graph $N(3)$. 

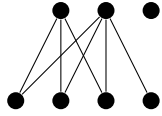
\begin{figure}[ht]
\begin{center}
	\begin{tikzpicture}
  		[scale=.6,auto=left]

		\node[vertex] (x0) at (-1,0)  { };
		\node[vertex] (x1) at (0,0)   { };
		\node[vertex] (x2) at (1,0)   { };
		\node[vertex] (x3) at (2,0)   { };
		\node[vertex] (x4) at (0,2)   { };
		\node[vertex] (x5) at (1,2)   { };
		\node[vertex] (x6) at (2,2)   { };

		\foreach \from/\to in {x1/x4,x1/x5,x2/x4,x2/x5,x5/x3,x0/x4,x0/x5}
    	\draw (\from) -- (\to);
	\end{tikzpicture}
\end{center}
\caption{Graph $N(3)$}
\label{fig:N(p)}
\end{figure}

\begin{theorem}\label{thm:N}
For any natural $k$ and $p$, the class of $N(p)$-free bipartite graphs of chordality at most $k$ 
is at most factorial.
\end{theorem}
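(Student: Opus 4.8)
The plan is to follow the same strategy as in the proof of the preceding theorem on $M(p)$-free bipartite graphs of bounded chordality. Let $G$ be an $N(p)$-free bipartite graph of chordality at most $k$, with parts $A$ and $B$. If $G$ contains no $K_{p^{2},p^{2}}$, then by Lemma~\ref{lem:KSS-free} it already admits an implicit representation, so we may assume that $G$ contains a $K_{p^{2},p^{2}}$; by Theorem~\ref{thm:modular} we may also assume that $G$ is prime, hence connected. Extend the $K_{p^{2},p^{2}}$ to a complete bipartite subgraph $H$ with parts $A_{0}\subseteq A$ and $B_{0}\subseteq B$ that is maximal under inclusion, and partition $A\setminus A_{0}$ into the set $A_{1}'$ of vertices with at most $p-1$ non-neighbours in $B_{0}$, the set $A_{1}''$ of vertices with at least $p$ non-neighbours and at least one neighbour in $B_{0}$, and the set $A_{2}$ of vertices with no neighbour in $B_{0}$; partition $B\setminus B_{0}$ into $B_{1}'$, $B_{1}''$, $B_{2}$ symmetrically, and write $A_{1}=A_{1}'\cup A_{1}''$, $B_{1}=B_{1}'\cup B_{1}''$. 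By maximality of $H$, no vertex outside $A_{0}$ is adjacent to all of $B_{0}$, and symmetrically on the $B$-side.

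The next step is to isolate the ``reduced'' forbidden graph. Let $N^{*}(p)$ be obtained from $N(p)$ by deleting the unique vertex adjacent to all vertices of the opposite part; then $N^{*}(p)\cong K_{1,p}+O_{1,1}$, the disjoint union of a star $K_{1,p}$ with one extra isolated vertex in each part. Both $N^{*}(p)$ and its bipartite complement (a star $K_{1,p+1}$ with a pendant edge attached to one of its leaves) are acyclic, and $N^{*}(p)$ is disconnected, hence distinct from $P_{7}$, $S_{1,2,3}$, $S_{1,2,2}$ and from $\widetilde{S_{1,2,2}}$. Therefore Theorem~\ref{thm:Allen+} applies: the class of bipartite graphs with no one-sided copy of $N^{*}(p)$ is at most factorial; so is the class of $N^{*}(p)$-free bipartite graphs, by Theorem~\ref{thm:Allen}; and the class of complete bipartite graphs is at most factorial trivially. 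The goal is then to cover $G$, with each vertex used at most twice, by the four graphs $G[A_{0}\cup B_{0}]$ (complete bipartite), $G[A_{0}\cup B_{1}]$ and $G[B_{0}\cup A_{1}]$ (each containing no one-sided copy of $N^{*}(p)$), and $G[A_{1}'\cup B_{1}']$ (which is $N^{*}(p)$-free), so that Lemma~\ref{lem:covering} yields the conclusion.

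To legitimise this covering one proves, exactly as in the $M(p)$ case, a chain of claims, each obtained by producing an induced $N(p)$ when the stated structure fails: (a) each copy of $N^{*}(p)$ arising in the three subgraphs above can be completed to an induced $N(p)$ of $G$ by adjoining a suitable vertex of $A_{0}$ or $B_{0}$ adjacent to the whole $(p+1)$-vertex independent set of that copy --- such a vertex exists because $|A_{0}|,|B_{0}|\ge p^{2}$; (b) no vertex of $A_{1}''$ has a neighbour in $B_{1}$, and symmetrically, since such an edge together with $p$ non-neighbours of that vertex in $B_{0}$, a neighbour and a non-neighbour of the $B_{1}$-vertex in $A_{0}$, and one further vertex induces $N(p)$; and (c) $A_{2}=B_{2}=\emptyset$, for which one uses a vertex $x\in A_{2}$ as the isolated vertex of $N(p)$, a vertex of $A_{0}$ as its dominating vertex, and any vertex with at least $p$ neighbours and a non-neighbour in $B_{0}$ as the second vertex.

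The main difficulty, and the point where the argument genuinely diverges from the $M(p)$ case, is claim (c). The extra vertex of $N(p)=L(1,p)+O_{1,0}$ is isolated rather than a second pendant, so to exhibit an induced $N(p)$ one must first locate a vertex with enough non-neighbours to play that role, and the only vertices guaranteed to qualify are those of $A_{2}$ (respectively $B_{2}$). Consequently a vertex in $A_{2}$ does not by itself contradict $N(p)$-freeness; running the argument of claim (c) only forces $A_{1}'=\emptyset$ whenever $A_{2}\neq\emptyset$ (since every vertex of $A_{1}'$ has at least $p$ neighbours and a non-neighbour in $B_{0}$), and, by the same token, every surviving vertex of $A_{1}''$ then has at most $p-1$ neighbours in $B_{0}$. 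This degenerate regime --- in which $G$ is the biclique $A_{0}\times B_{0}$ together with vertices attaching to $B_{0}$ (and to $A_{0}$) with bounded degree, plus a bounded-chordality remainder --- has to be analysed on its own and is again handled by a locally bounded covering, now using bounded-degree pieces in the spirit of Lemma~\ref{lem:degree}. Carrying out this case analysis, together with the routine check that the exceptional graphs of Theorem~\ref{thm:Allen+} never coincide with $N^{*}(p)$, is where the real work lies; once it is done, Lemma~\ref{lem:covering} completes the proof.
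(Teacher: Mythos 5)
Your setup (the decomposition into $A_0,A_1',A_1'',A_2$ and $B_0,B_1',B_1'',B_2$, the reduced graph $N^*(p)$, Theorem~\ref{thm:Allen+}, Lemma~\ref{lem:covering}) matches the paper, but the argument has a genuine gap precisely where you say ``the real work lies''. The two claims your covering depends on are the $M(p)$-style claims, and they do not transfer. For (b), the configuration you describe --- $x\in A_1''$ adjacent to $y\in B_1$, $p$ non-neighbours of $x$ in $B_0$, a neighbour and a non-neighbour of $y$ in $A_0$ --- induces an $M(p)$, not an $N(p)$: the vertex $x$ is adjacent to $y$, whereas $N(p)$ needs a vertex that is isolated inside the copy, and your unspecified ``one further vertex'' with the required non-adjacencies need not exist (a biclique with one pendant-like vertex attached to each side, the two joined by an edge, is $N(p)$-free but violates (b)). For (c) you concede that $A_2=B_2=\emptyset$ cannot be forced, and the ``degenerate regime'' you invoke is only sketched, not proved; your intermediate assertion that only vertices of $A_2$ (resp.\ $B_2$) can play the isolated role is also not right. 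Since your four-graph covering leaves uncovered every edge between $A\setminus A_0$ and $B\setminus B_0$ other than those inside $A_1'\cup B_1'$, the proof is incomplete as it stands.

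The missing idea, and the point where the paper's proof actually diverges from the $M(p)$ case, is that the isolated vertex of $N(p)$ lies in the \emph{same part} as the dominating vertex, so vertices of $B_1''\cup B_2$ --- not only of $A_2$ --- can serve as the isolated vertex against $A_0$. Concretely, if $x\in A_1$ is non-adjacent to $y\in B_1''\cup B_2$, then $x$, a neighbour and a non-neighbour of $x$ in $B_0$, any $p$ non-neighbours of $y$ in $A_0$, and $y$ itself (isolated) induce an $N(p)$; hence every vertex of $A_1$ is adjacent to every vertex of $B_1''\cup B_2$, and symmetrically, so all such edges are covered by complete bipartite graphs. Moreover $G[A_0\cup B_1\cup B_2]$ contains no copy of $N^*(p)$ with its large part in $A_0$ (any vertex of $B_0$ would complete it to $N(p)$), and, using the completeness just established, $G[A_2\cup B_2]$ contains no one-sided copy of $N^*(p)$ either (a vertex of $A_1$, adjacent to all of $B_2$, completes it). Together with the $N^*(p)$-freeness of $G[A_1'\cup B_1']$ this gives a covering of $G$ by a bounded number of graphs from classes that are at most factorial, and Lemma~\ref{lem:covering} concludes --- no emptiness of $A_2,B_2$ and no claim like your (b) is needed. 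You would need to supply these (or equivalent) claims to close the gap in your write-up.
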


\begin{proof}
Let $G$ be an $N(p)$-free bipartite graph of chordality at most $k$. If $G$ contains no $K_{p^2,p^2}$,
it admits an implicit representation by Lemma~\ref{lem:KSS-free}. Therefore, we assume
that $G$ contains a $K_{p^2,p^2}$. Moreover, by Theorem~\ref{thm:modular} we may assume that $G$ is prime 
and hence is connected.
We denote the two parts in the bipartition of $G$ by $A$ and $B$ and split them into 
$A_0,A_1',A_1'',A_2$ and $B_0,B_1',B_1'',B_2$ as in Theorem~\ref{thm:Qp}.

Let $N^*(p)$ denote the subgraph of $N(p)$ obtained by deleting the vertex of degree $p+1$
(i.e. the only vertex in the smaller part which dominates the other part).
By Theorem~\ref{thm:Allen+}, 
\begin{itemize}
\item[(1)] {\it The class of bipartite graphs containing no 
one-sided copy of $N^*(p)$ is at most factorial}. 
\end{itemize}

The rest of the proof will follow from a series of claims.

\begin{itemize}
\item[(2)]{ \it The subgraph of $G$ induced by $A_0$ and $B_1\cup B_2$ contains no one-sided copy of $N^*(p)$.}

\noindent
Assume, by contradiction, that this subgraph contains a copy of $N^*(p)$ with the larger part belonging to $A_0$.
Then this copy together with any vertex of $B_0$ induce an $N(p)$. 

\item[(3)]{ \it Every vertex in $A_1$ is adjacent to every vertex in $B_1'' \cup B_2$}.
 
\noindent
To prove this, assume a vertex $x \in A_1$ has a non-neighbour $y \in B_1'' \cup B_2$. 
By definition of $B_1''$ and $B_2$, vertex $y$ has at least $p$ non-neighbours in $A_0$, 
while $x$ has a neighbour and a non-neighbour in $B_0$. But then $x, y$, a neighbour and a non-neighbour of $x$ in $B_0$ 
and any $p$ non-neighbours of $y$ in $A_0$ induce an $N(p)$. 

\item[(4)]{ \it Every vertex in $B_1$ is adjacent to every vertex in $A_1'' \cup A_2$} 
by analogy with (3).

\item[(5)]{ \it The subgraph of $G$ induced by $A_2$ and $B_2$ contains no one-sided copy of $N^*(p)$.}

\noindent
To show this, we first observe that if $A_2\cup B_2$ is not empty, then $A_1\cup B_1$ is not empty, since otherwise the graph $G$ is disconnected.
Therefore, if $A_2\cup B_2$ is not empty, we may consider a vertex $x\in A_1$ (without loss of generality). 
Then the subgraph $G[A_2\cup B_2]$ contains no copy of $N^*(p)$ with the larger part belonging to $B_2$,
since otherwise this copy together with vertex $x$ induce an $N(p)$.

\item[(6)]{ \it The subgraph of $G$ induced by $A_1'$ and $B_1'$ is $N^*(p)$-free}.

\noindent
Assume $G[A_1'\cup B_1']$ contains an induced $N^*(p)$ and let the $p+1$ vertices of 
this graph belong to $A_1'$. Each of this $p+1$ vertices has at most $p-1$ non-neighbours in 
$B_0$ and since $B_0$ contains at least $p^2$ vertices we conclude that there must be a vertex 
in $B_0$ adjacent to each of the $p+1$ vertices of the copy of $N^*(p)$. But then together (that 
vertex and the copy of $N^*(p)$) induce an $N(p)$ in $G$.  
\end{itemize}

This structure obtained for graphs in the class of $N(p)$-free bipartite graphs 
containing a $K_{p^2,p^2}$ implies that such graphs can be covered by finitely many graphs from a finite union of classes with at most factorial 
speed of growth. By Lemma~\ref{lem:covering} we conclude that the class of $N(p)$-free bipartite graphs of chordality at most $k$ 
is at most factorial for any values of $k$ and $p$.
\end{proof}

\medskip
According to Remark~\ref{rem:1}, the result obtained in Theorem~\ref{thm:N} is, in a sense, best possible.

\subsubsection{$\cal A$-free bipartite graphs of bounded chordality}

By $\cal A$ we denote the graph represented in Figure~\ref{fig:A}. 

\begin{figure}[ht]
\begin{center} 
\begin{picture}(120,100)
\put(40,10){\circle*{7}}
\put(40,50){\circle*{7}}
\put(40,90){\circle*{7}}
\put(85,10){\circle*{7}}
\put(85,50){\circle*{7}}
\put(85,90){\circle*{7}}
\put(40,10){\line(0,1){40}}
\put(85,10){\line(0,1){40}}
\put(40,50){\line(0,1){40}}
\put(85,50){\line(0,1){40}}
\put(40,50){\line(1,0){45}}
\put(40,90){\line(1,0){45}}
\end{picture}
\caption{The graph  $\cal A$}
\label{fig:A}
\end{center} 
\end{figure}

\begin{theorem}
For each natural $k$, the class of $\cal A$-free bipartite graphs of chordality at most $k$ 
is at most factorial.
\end{theorem}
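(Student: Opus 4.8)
The plan is to follow the template of the preceding theorems, most closely the proofs for $M(p)$ and $N(p)$: reduce to a prime graph containing a large complete bipartite subgraph, extract a rigid structure around it, and then exhibit $G$ as a locally bounded covering by graphs from finitely many at most factorial classes. The first ingredient is an auxiliary graph. Note that $\mathcal{A}$ admits an automorphism interchanging its two colour classes and that each colour class contains a unique vertex adjacent to all three vertices of the other class; let $\mathcal{A}^{*}$ denote $\mathcal{A}$ with one such \emph{dominating} vertex deleted. Up to isomorphism $\mathcal{A}^{*}$ is the disjoint union of a $P_4$ and one vertex, so $\mathcal{A}^{*}$ and its bipartite complement are both forests on five vertices, in particular distinct from $P_7$, $S_{1,2,3}$, $S_{1,2,2}$ and the bipartite complement of $S_{1,2,2}$; hence, by Theorem~\ref{thm:Allen+}, the class of bipartite graphs containing no one-sided copy of $\mathcal{A}^{*}$ is at most factorial. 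Next, fix a constant $t$ large enough for Lemma~\ref{lem:KSS-free}; if an $\mathcal{A}$-free bipartite graph $G$ of chordality at most $k$ contains no $K_{t,t}$, it lies in the factorial class of $(K_{t,t},C_k,C_{k+1},\dots)$-free bipartite graphs, so by Theorem~\ref{thm:modular} it is enough to bound the number of $n$-vertex prime graphs in our class that contain a $K_{t,t}$; such a $G$ is connected. Fix an inclusion-maximal complete bipartite subgraph $H$ with parts $A_0\subseteq A$, $B_0\subseteq B$, so $|A_0|,|B_0|\ge t$, let $A_1$ be the vertices of $A\setminus A_0$ with a neighbour in $B_0$ (each also having, by maximality, a non-neighbour in $B_0$), $A_2$ the remaining vertices of $A\setminus A_0$, and define $B_1,B_2$ symmetrically.

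The structural claims I would prove are the following. First, $G[A_1\cup B_1]$ is complete bipartite: if $s\in A_1$ and $y\in B_1$ were non-adjacent, then a neighbour $q$ and a non-neighbour $u$ of $y$ in $A_0$, together with a neighbour $b^{+}$ and a non-neighbour $b^{-}$ of $s$ in $B_0$, induce a $C_4$ (namely $q,b^{+},u,b^{-}$) inside $H$ to which $y$ attaches as a pendant at $q$ and $s$ as a pendant at $b^{+}$, yielding an induced $\mathcal{A}$ --- impossible. Second, $G[A_0\cup B_1]$ contains no copy of $\mathcal{A}^{*}$ whose $3$-vertex colour class lies in $A_0$: any such copy together with any vertex of $B_0$ (which dominates $A_0$ and thus plays the role of the deleted dominating vertex) would induce $\mathcal{A}$; hence $G[A_0\cup B_1]$ has no one-sided copy of $\mathcal{A}^{*}$ and so lies in the factorial class of the first paragraph, and, by the colour-swap symmetry of $\mathcal{A}$, so does $G[A_1\cup B_0]$. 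Third, $A_2=B_2=\emptyset$: any edge joining $A_2\cup B_2$ to $A_1\cup B_1$, say $x\in A_2$ adjacent to $y\in B_1$, would combine with the complete bipartite graph $G[A_1\cup B_1]$ and a suitable vertex of $B_0$ to give a $C_4$ on which $y$ lies, with $x$ a private pendant at $y$ and a further private pendant available at a $C_4$-neighbour of $y$, producing an induced $\mathcal{A}$ (the case of an edge between $A_1$ and $B_2$ being symmetric); since $A_2\cup B_2$ also sends no edge to $A_0\cup B_0$, it would be a union of connected components of $G$, forcing $A_2=B_2=\emptyset$ because $G$ is connected and $A_0\cup B_0\neq\emptyset$.

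Granting these claims, $V(G)=A_0\cup A_1\cup B_0\cup B_1$, and the four induced subgraphs $G[A_0\cup B_0]$, $G[A_1\cup B_1]$ (both complete bipartite), $G[A_0\cup B_1]$ and $G[A_1\cup B_0]$ (both with no one-sided copy of $\mathcal{A}^{*}$) together cover all edges of $G$ --- the four types of edges $A_iB_j$, $i,j\in\{0,1\}$ --- and every vertex of $G$ is contained in exactly two of them. Thus $G$ is covered with multiplicity at most $2$ by graphs from the union $Y$ of the class of complete bipartite graphs and the class of bipartite graphs with no one-sided copy of $\mathcal{A}^{*}$, and $\log Y_n=O(n\log n)$; Lemma~\ref{lem:covering} then bounds by $2^{O(n\log n)}$ the number of $n$-vertex prime graphs in our class that contain a $K_{t,t}$, and combining this with the bound for the $K_{t,t}$-free members and Theorem~\ref{thm:modular} shows the whole class is at most factorial.

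The step I expect to be the main obstacle is the third claim, that $A_2=B_2=\emptyset$. For $M(p)$ and $N(p)$ the analogous "far" vertices are controlled by rigidity of adjacencies, but here a vertex of $A_2$ has no neighbour in $B_0$, so the straightforward configuration around it yields only an induced $M(2)$ rather than $\mathcal{A}$; one genuinely has to use the first claim together with primality --- for instance to guarantee that $G[A_1\cup B_1]$ is large and that the neighbourhoods of $A_1$-vertices in $B_0$ are not all equal (otherwise one of these common neighbourhoods, or its complement in $B_0$, would be a non-trivial module) --- in order to manufacture the required $C_4$ with two private pendants, and one must dispose of a handful of degenerate sub-cases ($|A_1|=1$ or $|B_1|=1$, a vertex of $A_2$ adjacent to all of $B_1$, and so on).
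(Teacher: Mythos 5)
Your argument is correct in substance, but it takes a genuinely different route from the paper's. The paper also passes to a prime graph (Theorem~\ref{thm:modular}), takes a maximal complete bipartite subgraph with parts $A,B$ (your $A_0,B_0$) and the attached sets $C=N(B)\setminus A$, $D=N(A)\setminus B$ (your $A_1,B_1$), and proves your first claim (completeness between the attached sets) and the emptiness of the far sets by the same kind of six-vertex configurations. But instead of invoking Theorem~\ref{thm:Allen+}, it uses primality a second time: if some $d\in D$ had two non-neighbours $a_1,a_2\in A$, a vertex $d'$ distinguishing $a_1,a_2$ (which exists, else $\{a_1,a_2\}$ is a module and necessarily lies in $D$) together with $a_1,a_2$, any $c\in C$ and a neighbour $b$ of $c$ induces $\mathcal{A}$; hence every vertex of $D$ has at most one non-neighbour in $A$ and, again by primality, vice versa. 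So the bipartite complements of $G[A\cup D]$ and $G[B\cup C]$ have maximum degree one, and $G$ is covered by four graphs admitting implicit representations (Lemmas~\ref{lem:degree} and~\ref{lem:implicit}) --- a stronger structural conclusion than your ``no one-sided copy of $\mathcal{A}^*$'' pieces. Your route, modelled on the $M(p)$/$N(p)$ proofs, only yields the factorial bound via Theorem~\ref{thm:Allen+} and Lemma~\ref{lem:covering}, but that is all the statement requires, and your use of $\mathcal{A}^*=P_4+K_1$ is legitimate: both it and its bipartite complement are acyclic and it is none of the exceptional graphs, and your verification that a $B_0$-vertex completes a one-sided copy in $G[A_0\cup B_1]$ to an induced $\mathcal{A}$ is correct.

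The step you flag as the main obstacle is not one; none of the degenerate cases you list needs attention. Primality gives $A_1\neq\emptyset$ (if $A_1=\emptyset$, then $B_0$ is complete to $A_0$ and anticomplete to everything else, i.e.\ a non-trivial module) and symmetrically $B_1\neq\emptyset$. Now suppose $x\in A_2$ is adjacent to $y\in B_1$. Take any $s\in A_1$, a neighbour $b^{+}$ and (by maximality of $H$) a non-neighbour $b^{-}$ of $s$ in $B_0$, and a neighbour $a$ of $y$ in $A_0$. By your first claim $s$ is adjacent to $y$, so $y\,s\,b^{+}\,a\,y$ is an induced $C_4$; the vertex $x$ is adjacent among these six only to $y$ (it lies on the $A$-side and has no neighbours in $B_0$), and $b^{-}$ is adjacent only to $a$ (it is complete to $A_0$, non-adjacent to $s$ by choice, on the $B$-side, and not adjacent to $x\in A_2$). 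Since $y$ and $a$ are adjacent, the six vertices $x,y,s,b^{+},a,b^{-}$ induce $\mathcal{A}$, a contradiction; the case of an edge between $B_2$ and $A_1$ is symmetric, and then $A_2\cup B_2$ is anticomplete to the rest of the graph and must be empty by connectedness, exactly as you argue. This is essentially the paper's own configuration for proving $V(G)=A\cup B\cup C\cup D$, and it needs neither largeness of $A_1\cup B_1$ nor distinct neighbourhoods in $B_0$; a single vertex of $A_1$ suffices.
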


\begin{proof}
Let $G$ be an $\cal A$-free bipartite graph of chordality at most $k$. If $G$ contains no $C_4$,
it admits an implicit representation by Lemma~\ref{lem:KSS-free}. Therefore, we assume
that $G$ contains a $C_4$. Moreover, by Theorem~\ref{thm:modular} we may assume that $G$ is prime.

We extend the $C_4$ contained in $G$ to a maximal (with respect to set inclusion) complete bipartite graph $H$ with parts $A$ and $B$.
Observe that $|A|\ge 2$ and $|B|\ge 2$, since $H$ contains a $C_4$. We denote by $C$ the set of 
neighbours of $B$ outside $A$ (i.e. the set of vertices outside $A$ each of which has at least one neighbour in $B$) 
and by $D$ the set of neighbours of $A$ outside $B$. Notice that 
\begin{itemize}
\item[(1)] {\it $C$ and $D$ are non-empty}, since otherwise $B$ or $A$ is a non-trivial module, contradicting the primality of $G$; 
\item[(2)] {\it each vertex of $C$ has a non-neighbour in $B$ and each vertex of $D$ has a non-neighbour in $A$} due to the maximality of $H$.
\end{itemize} 
We also claim that 
\begin{itemize}
\item[(3)] {\it $C\cup D$ induces a complete bipartite graph}. Indeed, assume there are two non-adjacent vertices $c\in C$ and $d\in D$.
Consider a neighbour $b_1$ and a non-neighbour $b_2$ of $c$ in $B$, and a neighbour $a_1$ and a non-neighbour $a_2$ of $d$ in $A$.
Then the six vertices $a_1,a_2,b_1,b_2,c,d$ induce an $\cal A$ in $G$, a contradiction. 
\item[(4)] $V(G)=A\cup B\cup C\cup D$. To show this, assume there is a vertex $x\not \in A\cup B\cup C\cup D$. Without loss of generality
we may assume that $x$ is adjacent to a vertex $c\in C$ (since $G$ is prime and hence is connected). Let $d$ be any vertex of $D$,
$b$ any neighbour of $c$ in $B$, and $a_1,a_2$ a neighbour and a non-neighbour of $d$ in $A$. Then  the six vertices $a_1,a_2,b,c,d,x$ 
induce an $\cal A$ in $G$, a contradiction. 
\item[(5)] {\it every vertex of $D$ has at most one non-neighbour in $A$}. Assume, by contradiction, that a vertex $d\in D$ has two non-neighbours
$a_1,a_2$ in $A$. Since $G$ is prime, there must exist a vertex distinguishing $a_1$ and $a_2$ (otherwise $\{a_1,a_2\}$ is a non-trivial module).
Let $d'$ be such a vertex. Clearly, $d'$ belongs to $D$. Finally, consider any vertex $c\in C$ and any of its neighbours $b\in B$. 
Then   the six vertices $a_1,a_2,b,c,d,d'$ induce an $\cal A$ in $G$, a contradiction.
\item[(6)] {\it every vertex of $A$ has at most one non-neighbour in $D$}. Assume, to the contrary, that a vertex $a\in A$ has two non-neighbours
$d_1,d_2$ in $D$. Then, by (3) and (5), $a$ is the only non-neighbour of $d_1$ and $d_2$. But then $\{d_1,d_2\}$ is a non-trivial 
module, contradicting the primality of $G$. 
\end{itemize} 
Claims (5) and (6) show that the bipartite complement of $G[A\cup D]$ is a graph of vertex degree at most 1.
Moreover, in this graph at most one vertex of $A$ and at most one vertex of $D$ have degree less than 1  (since $G$ is prime).
By symmetry, the bipartite complement of $G[B\cup C]$ is a graph of degree at most 1 with at most one vertex of degree 0 in each part.
Therefore, $G$ can be covered by at most 4 graphs each of which admits an implicit representation (by Lemma~\ref{lem:degree}). 
As a result, by Lemma~\ref{lem:implicit}, $G$ admits an implicit representation and hence the class under consideration is at most factorial. 
\end{proof}


\subsection{$P_7$-free bipartite graphs}
\label{sec:P7}

As we mentioned earlier, the class of $P_7$-free bipartite graphs
is the only class defined by a single forbidden induced subgraph 
for which the membership in the factorial layer is an open question.
To better understand this stubborn case, in this section we systematically 
study subclasses of $P_7$-free bipartite graphs obtained by forbidding one 
more graph. 

First, we observe that all the results obtained in the previous section 
are applicable to $P_7$-free bipartite graphs, because these graphs are 
$(C_8,C_9,\ldots)$-free.

Next, we list a number of subclasses of $P_7$-free bipartite graphs for which 
the membership in the factorial layer is either known or easily follows from some known
results. In particular, from Theorem~\ref{thm:Allen} we known that $(P_7,G)$-free bipartite 
graphs constitute a factorial class for any graph $G\ne P_7$ such that neither $G$ nor its 
bipartite complement contains a cycle. Also, two more results follow readily from Lemma~\ref{lem:forest}.

\begin{cor}\label{cor:C6}
The classes of $(P_7,C_6)$-free and $(P_7,3K_2)$-free bipartite graphs are factorial.
\end{cor}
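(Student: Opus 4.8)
The plan is, for each class, to establish ``at most factorial'' by reducing to Lemma~\ref{lem:forest}, and to note a matching lower bound. For the lower bound, $2K_2$ is an induced subgraph of each of $P_7$, $C_6$ and $3K_2$, so every $2K_2$-free bipartite graph belongs to both classes in the statement; since the class $P^3$ of $2K_2$-free bipartite graphs is a minimal factorial class, both classes have at least factorial speed, and it remains to bound them above. I will repeatedly use the elementary observation that a $P_7$-free bipartite graph contains no induced $C_k$ with $k\ge 8$ (such a $C_k$ contains an induced $P_7$), so a $P_7$-free bipartite graph is chordal bipartite as soon as it is $C_6$-free.

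The first class is then immediate: a $(P_7,C_6)$-free bipartite graph is chordal bipartite, and conversely every $P_7$-free chordal bipartite graph is $(P_7,C_6)$-free, so the two classes coincide; since $P_7$ is a forest, Lemma~\ref{lem:forest} finishes it.

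For the second class, $3K_2$ is also a forest, so Lemma~\ref{lem:forest} already handles the subclass $Z$ of $(P_7,3K_2)$-free bipartite graphs that happen to be chordal bipartite. By the relaxation in Section~\ref{sec:remarks}, it then suffices to verify the hypothesis of Corollary~\ref{cor:Delta} (or of Lemma~\ref{lem:covering}) for each $(P_7,3K_2)$-free bipartite graph $G$ that is not chordal bipartite; by the observation above such a $G$ contains an induced $6$-cycle $a_1b_1a_2b_2a_3b_3$. Put $S=\{a_1,b_1,a_2,b_2,a_3,b_3\}$ and sort $V(G)\setminus S$ into the boundedly many types given by adjacency to $S$. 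The starting point is that an edge $uw$ with both ends outside $S$ and non-adjacent to all of $S$, together with any one of the three induced copies of $2K_2$ inside the $C_6$, would induce a $3K_2$; hence the vertices non-adjacent to $S$ form an independent set. Iterating this style of argument --- each time producing a $3K_2$ from an edge that lies ``far'' from $S$ or attaches to $S$ in a forbidden way, and using that induced $P_7$'s are excluded --- one aims to show that all but a bounded number of vertices fall into a single type, and that within that type two vertices have $|N(x)\Delta N(y)|$ bounded by a constant (this needs one more application of $3K_2$-freeness, now to the part of $G$ outside $S$). That supplies the pair required by Corollary~\ref{cor:Delta}, and with the Section~\ref{sec:remarks} relaxation we conclude the class is at most factorial, hence factorial. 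I expect this last step --- determining which types can be large and extracting near-twins from a large one --- to be the main obstacle; the reduction to chordal bipartite graphs, the lower bound, and the invocations of Lemmas~\ref{lem:forest} and~\ref{lem:covering} and Corollary~\ref{cor:Delta} are routine.
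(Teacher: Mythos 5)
Your lower bound and your treatment of the first class are exactly the paper's argument: $(P_7,C_6)$-free bipartite graphs are precisely the $P_7$-free chordal bipartite graphs, and Lemma~\ref{lem:forest} applies since $P_7$ is a forest; that part is fine.

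For the $(P_7,3K_2)$-free class, however, there is a genuine gap. The entire difficulty of that case is concentrated in the step you explicitly leave open: after fixing an induced $C_6$ and sorting the remaining vertices into adjacency types, you only say that ``one aims to show'' that a large type contains two vertices with $|N(x)\Delta N(y)|$ bounded by a constant, and you acknowledge this is the main obstacle. Nothing in your sketch establishes it, and it is far from routine: the class of $(P_7,3K_2)$-free bipartite graphs contains the bipartite complements of \emph{all} $P_7$-free chordal bipartite graphs, a rich (factorial, but structurally nontrivial) family, and there is no evident reason why every such graph containing a $C_6$ must possess a near-twin pair as required by Corollary~\ref{cor:Delta}. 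So as written the proposal proves the first class but not the second. The paper avoids this issue entirely with a one-line observation you missed: under bipartite complementation $\widetilde{P_7}=P_7$ and $\widetilde{C_6}=3K_2$, so the $(P_7,3K_2)$-free bipartite graphs are exactly the bipartite complements of the $(P_7,C_6)$-free ones; since bipartite complementation is a bijection preserving the number of labelled $n$-vertex graphs, the second class has the same speed as the first and is therefore at most factorial, with no structural analysis around a $C_6$ and no appeal to Corollary~\ref{cor:Delta} needed. If you want to salvage your direct approach you would have to actually prove the near-twin (or a covering) statement for $(P_7,3K_2)$-free graphs containing a $C_6$, which is a substantial piece of work that the complementation argument renders unnecessary.
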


\begin{proof}
Both classes contain $2K_2$-free bipartite graphs, which proves a lower bound.
To show an upper bound, we observe that the class of $(P_7,C_6)$-free bipartite graphs coincides with $P_7$-free chordal
bipartite graphs and hence is at most factorial by Lemma~\ref{lem:forest}. Also, the class of $(P_7,3K_2)$-free bipartite 
graphs coincides with the bipartite complements of $(P_7,C_6)$-free bipartite graphs and hence is at most factorial too.  
\end{proof}

\subsubsection{$(P_7,S_{p,p})$-free bipartite graphs}
By $S_{p,q}$ we denote a double star, i.e. the graph obtained from two stars $K_{1,p}$ and $K_{1,q}$
by connecting their central vertices with an edge. 

\begin{theorem}\label{lem:double-star}
For any $p$, the class of $(P_7,S_{p,p})$-free bipartite graphs 
admits an implicit representation and hence is at most factorial.
\end{theorem}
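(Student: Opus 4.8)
The plan is to show that every $(P_7,S_{p,p})$-free bipartite graph either is covered by a constant number of graphs from classes already known to admit an implicit representation, or else contains a vertex whose neighbourhood agrees (up to a bounded symmetric difference) with that of another vertex, so that Corollary~\ref{cor:Delta} or one of the covering lemmas applies. First I would reduce to the prime case via Theorem~\ref{thm:modular}, and to the connected case. If $G$ contains no large $K_{s,s}$ for a suitable $s=s(p)$, then $G$ is $K_{s,s}$-free of chordality at most $8$, so by Lemma~\ref{lem:KSS-free} it admits an implicit representation; thus we may assume $G$ contains a large complete bipartite subgraph $H$, which we extend to a maximal complete bipartite subgraph with parts $A_0\subseteq A$, $B_0\subseteq B$.

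Next I would exploit $P_7$-freeness to control how the rest of the graph attaches to $H$. The key point is that a $P_7$-free bipartite graph has bounded \emph{radius} from any induced $C_4$ (indeed, in a connected $P_7$-free bipartite graph, from any vertex every other vertex is within distance $3$). So, writing the vertices outside $A_0\cup B_0$ in layers by distance from $H$, there are only a bounded number of layers, and the $S_{p,p}$-freeness then forbids any vertex from having $p$ private neighbours when compared to a suitable "twin" inside the complete bipartite core: a vertex with $p$ neighbours off the core and a neighbour $y$ (across the edge) with $p$ of its own private neighbours would produce an induced $S_{p,p}$. I would set up, as in the proofs of Theorems~\ref{thm:Qp} and~\ref{thm:N}, the partition of $A\setminus A_0$ into $A_1'$ (few non-neighbours in $B_0$), $A_1''$ (many non-neighbours in $B_0$), $A_2$ (no neighbour in $B_0$), and symmetrically for $B$. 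The $S_{p,p}$-freeness, combined with the fact that $A_0,B_0$ are large, should force $A_1''=B_1''=\varnothing$ and heavily restrict $A_2,B_2$ and the adjacencies among the remaining parts.

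The main obstacle I expect is handling the "deep" part $A_2\cup B_2$ of vertices with no neighbour in the complete bipartite core: unlike in Theorem~\ref{thm:Qp}, forbidding $S_{p,p}$ rather than a graph containing a dominating vertex does not immediately kill $A_2\cup B_2$. Here I would lean on $P_7$-freeness again — a path from $A_2$ to $B_0$ has length at most $6$, so the interaction is shallow — together with the observation that $G[A_2\cup B_2]$ is itself $(P_7,S_{p,p})$-free and, being at bounded distance from a big biclique, must be $S_{q,q}$-free-like or of bounded degree; an $S_{p,p}$-free bipartite graph in which some vertex has large degree has all-but-$(p-1)$ of the opposite part as neighbours of that vertex, giving a $\Delta$-bound. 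Once every graph in the class is shown to decompose into a bounded number of pieces, each either of bounded degree, or $2K_2$-free bipartite (class $P^3$), or a $K_{s,s}$-free bipartite graph of bounded chordality — all of which admit implicit representations — I would conclude by Lemma~\ref{lem:implicit} that $G$ admits an implicit representation, and hence (invoking the relaxation noted in Section~\ref{sec:remarks}) the whole class $(P_7,S_{p,p})$-free bipartite graphs does, which in particular makes it at most factorial.
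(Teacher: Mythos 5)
Your plan transplants the $A_1',A_1'',A_2$ analysis of Theorems~\ref{thm:Qp} and~\ref{thm:N}, and the places where you write ``should force'' are exactly where it breaks. Forbidding $S_{p,p}$ does \emph{not} force $A_1''=B_1''=\emptyset$: for $p\ge 2$, take $K_{n,n}$ with one pendant vertex attached to a vertex of $B_0$; this graph is $(P_7,S_{p,p})$-free, the biclique is maximal, and the pendant lies in $A_1''$. The proofs you are imitating work because $Q(p)$, $M(p)$ and $N(p)$ contain a vertex dominating the opposite part, so one neighbour plus $p$ non-neighbours in the core already produces the forbidden graph; an $S_{p,p}$ instead needs $p$ leaves at \emph{each} centre with no edges between the two leaf sets, and in a bipartite graph those leaf sets lie in different parts and may be completely joined. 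For the same reason your treatment of the deep part rests on a false statement: it is not true that in an $S_{p,p}$-free bipartite graph a vertex of large degree sees all but $p-1$ vertices of the opposite part (a star together with isolated vertices is $P_4$-free, hence $S_{p,p}$-free). An edge $xy$ with two high-degree endpoints yields an $S_{p,p}$ only when one can pick $p$ neighbours of $x$ and $p$ neighbours of $y$ spanning no edges, i.e.\ only when the bipartite graph between $N(x)\setminus\{y\}$ and $N(y)\setminus\{x\}$ contains an induced $O_{p,p}$. (Also, the parenthetical claim that every vertex of a connected $P_7$-free bipartite graph is within distance $3$ of every other is wrong -- $P_6$ gives distance $5$; bounded diameter is true but with bound $5$.)

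That $O_{p,p}$ observation is precisely the missing idea, and it is how the paper argues. Let $A=N(K_{p,p})$. For each edge $(u,v)$ of the $K_{p,p}$, the subgraph $H_{uv}$ of $G[A]$ induced by the neighbours of $u$ and of $v$ is $O_{p,p}$-free, since an induced $O_{p,p}$ together with $u,v$ would induce an $S_{p,p}$; hence its bipartite complement is a $(P_7,K_{p,p})$-free bipartite graph, and $H_{uv}$ admits an implicit representation by Lemma~\ref{lem:KSS-free}. The $p^2$ graphs $H_{uv}$ cover $G[A]$ with each vertex in at most $p^2$ of them, so $G[A]$ is implicitly representable by Lemma~\ref{lem:implicit}. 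Moreover each vertex of $A$ has at most $p$ neighbours in the $K_{p,p}$ and at most $p-1$ neighbours outside $K_{p,p}\cup A$ (its leaves chosen outside $A$ have no neighbours in the $K_{p,p}$, so together with a neighbour $v$ in the $K_{p,p}$ and the neighbours of $v$ there they would induce an $S_{p,p}$), i.e.\ at most $2p-1$ neighbours outside $A$, and Lemma~\ref{lem:partial-implicit} finishes the proof. In particular the vertices with no neighbour in the $K_{p,p}$ -- your declared ``main obstacle'' -- never need to be analysed: they are absorbed by the recursion built into Lemma~\ref{lem:partial-implicit}. Note finally that a fallback through Corollary~\ref{cor:Delta} could at best give factoriality, not the implicit representation claimed by the theorem.
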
    

\begin{proof}
Let $G$ be a $(P_7,S_{p,p})$-free bipartite graph. By Lemma~\ref{lem:implicit} we assume that $G$ is connected.
If $G$ does not contain $K_{p,p}$ as an induced subgraph, then $G$ can be 
described implicitly by Lemma~\ref{lem:KSS-free}. So suppose $G$ contains a $K_{p,p}$.
If $G=K_{p,p}$, then obviously it can be described implicitly. Therefore, we assume
that the set of neighbours of the $K_{p,p}$ is non-empty. We denote this set by $A$ and apply 
Lemma~\ref{lem:partial-implicit} (keeping in mind remarks of Section~\ref{sec:remarks}).

First, we show that $G[A]$ can be represented implicitly. To this end, 
for each edge $(u,v)$ of the $K_{p,p}$, we denote by $H_{uv}$ the subgraph 
of $A$ induced by the neighbours of $u$ and the neighbours of $v$.
This subgraph must be $O_{p,p}$-free, since otherwise any copy of this subgraph
together with $u$ and $v$ would induce an $S_{p,p}$. Clearly, every pair of
vertices of $A$ (from different parts of the bipartition) belongs to at least 
one subgraph $H_{uv}$ and hence the set of all these subgraphs gives a covering 
of $G[A]$. Also each vertex of $A$ is covered by at most $p^2$ subgraphs in 
the covering, because $p^2$ is the total number of such subgraphs. 
Finally, we observe that each $H_{uv}$ admits an implicit representation, 
because each of them is the bipartite complement of a $(P_7,K_{p,p})$-free 
bipartite graph, which admits such a representation by Lemma~\ref{lem:KSS-free}.
Therefore, by Lemma~\ref{lem:implicit}, $G[A]$ admits an implicit representation.

Second, we show that each vertex of $A$ has at most $2p-1$ neighbours outside of this set. 
Indeed, each vertex of $A$ has at most $p$ neighbours in the $K_{p,p}$. Now assume
a vertex $u\in A$ has at least $p$ neighbours outside of $K_{p,p}\cup A$. 
Observe that $u$ must also have a neighbour $v$ in the $K_{p,p}$. But then 
vertices $u$ and $v$ together with the neighbours of $v$ in the $K_{p,p}$
and the $p$ neighbours of $u$ outside of $K_{p,p}\cup A$ induce an $S_{p,p}$.
This contradiction shows that each vertex $u$ of $A$ has at most $p-1$ neighbours outside of 
$K_{p,p}\cup A$ and hence at most $2p-1$ neighbours outside of $A$. 

Combining the two facts above, we conclude by Lemma~\ref{lem:partial-implicit}
that $G$ can be represented implicitly. 
\end{proof}

\medskip
To conclude this section, we observe that the result of Theorem~\ref{lem:double-star}
cannot be extended to graphs of bounded chordality, because 

\begin{remark}\label{rem:2}
For any $p\ge 2$, the class of $(P_8,S_{p,p})$-free bipartite graphs is superfactorial. 
\end{remark}

This conclusion follows from the fact that for any $p\ge 2$, the class of $(P_8,S_{p,p})$-free bipartite graphs
contains all $\widetilde{C}_4$-free graphs.  

\subsubsection{($P_7, K_{p,p}+O_{0,p}$)-free bipartite graphs}


%
%



By $B(p,q)$ we denote the bipartite Ramsey number, i.e. the minimum number such that every bipartite graph with
at least $B(p,q)$ vertices in each of the parts contains either $K_{p,q}$ or $O_{p,q}$  as an induced subgraph.



\begin{lemma} \label{lm:KssO0s}
	For every $p \in \mathbb{N}$, any $(K_{p,p} + O_{0,p})$-free bipartite graph $G=(A,B,E)$ is either 
$K_{t,t}$-free or $O_{t,t}$-free, where $t = B(p,p)+p-1$.
\end{lemma}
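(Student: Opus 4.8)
The plan is to argue by contradiction: suppose $G=(A,B,E)$ is $(K_{p,p}+O_{0,p})$-free but contains both an induced $K_{t,t}$ and an induced $O_{t,t}$ with $t=B(p,p)+p-1$. The first step is to extract from the $K_{t,t}$ a ``core'' complete bipartite graph that is safely large and in general position with respect to the $O_{t,t}$. Concretely, among the $2t$ vertices of the $K_{t,t}$ at most $p-1$ on each side can be permitted to interact badly with a fixed large independent set; after discarding at most $p-1$ vertices from each side of $K_{t,t}$ we retain a $K_{p,p}$ (since $t-(p-1)\ge B(p,p)\ge p$) whose vertices are ``clean''. Simultaneously, inside the $O_{t,t}$ we would invoke the definition of the bipartite Ramsey number $B(p,p)$: restricting $O_{t,t}$ to $B(p,p)$ vertices on each side forces an induced $K_{p,p}$ or an induced $O_{p,p}$; but $O_{t,t}$ is edgeless, so only the $O_{p,p}$ option survives, giving a genuine induced $O_{p,p}$ somewhere in $G$.

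The heart of the argument is then to combine a clean induced $K_{p,p}$ with a disjoint induced $O_{p,p}$ so that together they span an induced $K_{p,p}+O_{0,p}$. Note $K_{p,p}+O_{0,p}$ is a $K_{p,p}$ together with $p$ extra vertices, all on the same side, none adjacent to anything in the picture. So what I really need is: an induced $K_{p,p}$ on vertex sets $A_0\subseteq A$, $B_0\subseteq B$, plus $p$ further vertices lying entirely in one part (say $B$) that are non-adjacent to all of $A_0$. The role of the ``$+p-1$'' slack in $t$ is exactly to guarantee room: from the $O_{t,t}$ I take its $B$-side vertices (an independent set of size $t$ in $B$), and I need $p$ of them missing all of $A_0$. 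The danger is that vertices of the $O_{t,t}$ on the $B$-side could be adjacent to vertices of $A_0$.

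The main obstacle, and where the counting has to be done carefully, is controlling these cross-adjacencies between the $K_{t,t}$ and the $O_{t,t}$. I expect the right bookkeeping to be: first choose the $O_{p,p}$ inside $O_{t,t}$ as above; call its parts $A'\subseteq A$, $B'\subseteq B$, each of size $p$, with no edges among $A'\cup B'$. Now look at the $K_{t,t}$ with parts $A_1\subseteq A$, $B_1\subseteq B$. Each vertex of $A'$ has some neighbourhood inside $B_1$; each vertex of $B'$ has some neighbourhood inside $A_1$. Using $|A_1|=|B_1|=t=B(p,p)+p-1$, I want to thin $A_1,B_1$ down to sets $A_0,B_0$ of size $p$ forming an induced $K_{p,p}$ that is additionally non-adjacent to all of $B'$ (so that $A_0\cup B_0\cup B'$ is an induced $K_{p,p}+O_{0,p}$). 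The obstruction is a vertex $b\in B'$ with many neighbours in $A_1$: if \emph{every} choice of $p$ vertices of $A_1$ includes a neighbour of some $b\in B'$, I cannot finish this way. The resolution is to switch sides adaptively or to use that such a $b$, being adjacent to $\ge t-(p-1)=B(p,p)$ vertices of $A_1$, together with the independent set structure around it yields a forbidden configuration directly — essentially re-running the extraction with the roles of ``dense'' and ``sparse'' swapped. Pinning down this case analysis so that in every case one of the two parts supplies a clean $O_{0,p}$ attached to a clean $K_{p,p}$ is the crux; once it is set up, the contradiction with $(K_{p,p}+O_{0,p})$-freeness is immediate, and the lemma follows.
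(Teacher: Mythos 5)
Your proposal is not yet a proof: the step you yourself label ``the crux'' --- producing, in every case, a clean $K_{p,p}$ together with $p$ vertices of one part nonadjacent to it --- is exactly the argument that is missing, and the preparatory steps you do spell out either do no work or are unjustified. Invoking the bipartite Ramsey number \emph{inside} the $O_{t,t}$ is vacuous: that graph is edgeless, so any $p+p$ of its vertices already form an induced $O_{p,p}$, and no information is gained. The place where $B(p,p)$ is genuinely needed is the \emph{cross} bipartite graph between one side of the $K_{t,t}$ and the opposite side of the $O_{t,t}$, which your plan never examines. Similarly, your opening claim that after discarding at most $p-1$ vertices from each side of the $K_{t,t}$ one retains a ``clean'' $K_{p,p}$ has no justification: nothing bounds by $p-1$ the number of clique vertices with neighbours in the independent set's parts (all $t$ of them may have such neighbours). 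Finally, you never address possible overlap of vertices between the $K_{t,t}$ and the $O_{t,t}$, which has to be dealt with before any of the extractions you describe make sense.

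For comparison, the paper's argument is short and does all of this explicitly. First, at least one of $A_K\cap A_O$ and $B_K\cap B_O$ is empty (a vertex in one intersection would have to be simultaneously adjacent and nonadjacent to a vertex in the other); assume $A_K\cap A_O=\emptyset$. If $|B_K\cap B_O|\ge p$, then $p$ vertices of $A_K$, $p$ vertices of $B_K\cap B_O$ and $p$ vertices of $A_O$ already induce the forbidden $K_{p,p}+O_{0,p}$. Otherwise $|B_K\setminus B_O|\ge t-(p-1)=B(p,p)$ and $|B_O\setminus B_K|\ge B(p,p)$, and one applies the definition of $B(p,p)$ to $G[(B_K\setminus B_O)\cup A_O]$: if it contains a $K_{p,p}$, append $p$ vertices of $B_O\setminus B_K$ (nonadjacent to $A_O$) to get the forbidden graph; if it contains an induced $O_{p,p}$, append $p$ vertices of $A_K$ (complete to $B_K\setminus B_O$ and disjoint from $A_O$) to get it. This two-case analysis on the cross adjacencies is precisely what your ``switch sides adaptively / re-run the extraction'' sentence gestures at but does not carry out; until it is written down, the contradiction you claim is ``immediate'' has not been established.
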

\begin{proof}
	Suppose, by contradiction, that $G$ contains $K_{t,t}$ and $O_{t,t}$ as induced subgraphs. Denote by
	$A_K \subseteq A$, $B_{K} \subseteq B$ the parts of the $K_{t,t}$ and by $A_O \subseteq A$, $B_O \subseteq B$ the
	parts of the $O_{t,t}$.

	Obviously, either $A_K \cap A_O = \emptyset$ or $B_K \cap B_O = \emptyset$. Without loss of generality, assume that
	$A_K \cap A_O = \emptyset$. If $|B_K \cap B_O| \geq p$ then any $p$ vertices from $A_K$, any $p$ vertices from 
	$B_K \cap B_O$ and any $p$ vertices from $A_0$ induce forbidden $K_{p,p} + O_{0,p}$. If $|B_K \cap B_O| < p$,
	then $|B_K \setminus B_O| \geq B(p,p)$ and $|B_O \setminus B_K| \geq B(p,p) > p$. Therefore, 
	$G[B_K \setminus B_O \cup A_O]$ contains either $K_{p,p}$ or an induced $O_{p,p}$. In the former case
	$G[B_K \setminus B_O \cup A_O \cup B_O \setminus B_K]$ contains $K_{p,p}+O_{0,p}$ as an induced subgraph and
	in the latter case $G[A_K \cup B_K \setminus B_O \cup A_O]$ contains the forbidden induced subgraph.
	This contradiction proves the lemma.
\end{proof}

\begin{theorem}\label{thm:KO}
	For every $p \in \mathbb{N}$, the class of $(P_7,K_{p,p}+O_{0,p})$-free bipartite graphs is at most factorial.
\end{theorem}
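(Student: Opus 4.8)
The plan is to deduce the theorem from Lemma~\ref{lm:KssO0s}, Lemma~\ref{lem:KSS-free}, and a bipartite-complementation argument of the kind already used in the proof of Corollary~\ref{cor:C6}. Fix $p$ and put $t=B(p,p)+p-1$; we may assume $t\ge 2$, since replacing $t$ by $\max\{t,2\}$ only weakens the conditions ``$K_{t,t}$-free'' and ``$O_{t,t}$-free'' (a $K_{t',t'}$ contains an induced $K_{t,t}$ and an $O_{t',t'}$ an induced $O_{t,t}$ whenever $t'\ge t$). By Lemma~\ref{lm:KssO0s}, every $(K_{p,p}+O_{0,p})$-free bipartite graph is $K_{t,t}$-free or $O_{t,t}$-free, so every graph in the class of the theorem lies in the union of the classes of $(P_7,K_{t,t})$-free bipartite graphs and of $(P_7,O_{t,t})$-free bipartite graphs. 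Since the union of two at-most-factorial classes is at most factorial, it suffices to prove that each of these two classes is at most factorial.

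First I would handle $(P_7,K_{t,t})$-free bipartite graphs. A $P_7$-free bipartite graph contains no induced cycle $C_\ell$ with $\ell\ge 8$, because any seven consecutive vertices of such a cycle induce a $P_7$ (the two endpoints are at distance at least $2$ the other way round). Hence this class is contained in the class of $(K_{t,t},C_8,C_9,\ldots)$-free bipartite graphs, which admits an implicit representation and is at most factorial by Lemma~\ref{lem:KSS-free} applied with parameters $t$ and $8$.

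Next I would handle $(P_7,O_{t,t})$-free bipartite graphs by passing to the bipartite complement. The elementary fact needed here is that the bipartite complement of $P_7$ is isomorphic to $P_7$: writing $P_7=v_1v_2\cdots v_7$ with parts $\{v_1,v_3,v_5,v_7\}$ and $\{v_2,v_4,v_6\}$, the non-edges between the two parts are exactly $v_1v_4,v_1v_6,v_3v_6,v_5v_2,v_7v_2,v_7v_4$, and these form the path $v_3v_6v_1v_4v_7v_2v_5$. Since also $\widetilde{O_{t,t}}=K_{t,t}$ by definition, the bipartite complement of a $(P_7,O_{t,t})$-free bipartite graph is a $(P_7,K_{t,t})$-free bipartite graph and conversely. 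As bipartite complementation is an involution fixing the vertex set, for each $n$ it gives a bijection between the labelled $n$-vertex graphs of the two classes; together with the previous paragraph this shows that $(P_7,O_{t,t})$-free bipartite graphs form an at-most-factorial class as well, which completes the proof.

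I do not expect a genuine obstacle once Lemma~\ref{lm:KssO0s} is available: the argument is short and assembles standard pieces. The step needing the most care is the bookkeeping in the last paragraph --- verifying that the correspondence $G\leftrightarrow\widetilde G$ is truly label-preserving and bijective (for connected bipartite graphs the bipartition is unique up to swapping the two sides, which is irrelevant, and disconnected graphs are treated component by component), so that the speed bound transfers faithfully. One could also push a little further and note that bipartite complementation preserves the existence of an implicit representation, so that, combined with the first case, the whole class even admits an implicit representation; but only the factorial bound is asserted in the statement.
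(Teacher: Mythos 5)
Your proof is correct and follows essentially the same route as the paper: Lemma~\ref{lm:KssO0s} gives the containment in $Free(P_7,K_{t,t})\cup Free(P_7,O_{t,t})$ with $t=B(p,p)+p-1$, the first class is handled by Lemma~\ref{lem:KSS-free} (since $P_7$-free bipartite graphs have bounded chordality), and the second is reduced to the first via bipartite complementation using $\widetilde{P}_7=P_7$. You merely spell out details the paper leaves implicit (the explicit check that $\widetilde{P}_7\cong P_7$, the adjustment $t\ge 2$, and the labelled-count bookkeeping), so no further comment is needed.
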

\begin{proof}
	From Lemma~\ref{lm:KssO0s} it follows that the class of $(P_7,K_{p,p}+O_{0,p})$-free bipartite graphs is contained 
in the union  $Free(P_7, K_{t,t}) \cup Free(P_7, O_{t,t})$,
	where $t = B(p,p)+p-1$. Since $P_7=\widetilde{P}_7$, from Lemma~\ref{lem:KSS-free} it follows that 
both classes in the union are at most factorial. Therefore,
	the class of $(P_7,K_{p,p}+O_{0,p})$-free bipartite graphs also is at most factorial.
\end{proof}

\medskip
By analogy with Remark~\ref{rem:2}, we conclude that 

\begin{remark}\label{rem:3}
For any $p\ge 2$, the class of $(P_8,K_{p,p}+O_{0,p})$-free bipartite graphs is superfactorial. 
\end{remark}

Therefore, Theorem~\ref{thm:KO} cannot be extended to graphs of bounded chordality.

\subsubsection{($P_7, K_{1,2} + 2K_2$)-free bipartite graphs}

\begin{center}
	\begin{tikzpicture}
  		[scale=.6,auto=left]

%

		\node[vertex] (x1) at (0,0)   { };
		\node[vertex] (x2) at (2,0)   { };
		\node[vertex] (x3) at (3,0)   { };
		\node[vertex] (x4) at (4,0)   { };
		\node[vertex] (x5) at (1,2)   { };
		\node[vertex] (x6) at (3,2)   { };
		\node[vertex] (x7) at (4,2)   { };

		\foreach \from/\to in {x1/x5, x2/x5, x3/x6, x4/x7}
    	\draw (\from) -- (\to);
		\coordinate [label=center:$K_{1,2}+2K_2$] (K122K2) at (2,-1);
	\end{tikzpicture}
\end{center}

\begin{lemma}
	Let $G=(V,E)$ be a $(P_7, K_{1,2}+2K_2)$-free bipartite graph. 
	Then $G$ either is $3K_2$-free or has two
	vertices $a,b$ such that $|N(a) \Delta N(b)| = 2$.
\end{lemma}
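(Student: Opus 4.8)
The plan is to prove the dichotomy directly: assuming $G$ is not $3K_2$-free, fix an induced $3K_2$ with edges $a_1b_1,a_2b_2,a_3b_3$, where all $a_i$ lie in one part and all $b_i$ in the other, and then exhibit two vertices $a,b$ with $|N(a)\Delta N(b)|=2$ — the argument will produce either two vertices of the same part differing in precisely two neighbours, or the two like-side endpoints of two distinct $K_2$-components. The engine is a consequence of $(K_{1,2}+2K_2)$-freeness: since $G$ is bipartite, any vertex $v$ with two neighbours $u,u'$ induces together with them a $K_{1,2}$ (the leaves $u,u'$ being non-adjacent), so for any induced $2K_2$ on a $4$-set $S$ the graph induced on $V\setminus N[S]$ (writing $N[S]=S\cup N(S)$) has maximum degree at most $1$ — otherwise $v$, two of its neighbours in $V\setminus N[S]$ and $S$ induce a $K_{1,2}+2K_2$. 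Taking $S=\{a_1,b_1,a_2,b_2\}$ and reading off neighbourhoods yields, for every permutation $(i,j,k)$ of $(1,2,3)$, that $N(a_i)\setminus\{b_i\}\subseteq N(a_j)\cup N(a_k)$ and $N(b_i)\setminus\{a_i\}\subseteq N(b_j)\cup N(b_k)$; more precisely, every $a$-side vertex $w$ of $W:=V\setminus N[\{a_1,b_1,a_2,b_2\}]$ satisfies $N(w)\subseteq(N(a_1)\cup N(a_2))\cup\{w'\}$, where $w'$ is its unique neighbour in $W$ if any, and symmetrically on the $b$-side, while $a_3b_3$ is an edge of the matching $G[W]$.

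Next I reduce to $G$ connected. Isolated vertices may be deleted: the class is hereditary, the $3K_2$ survives, and a same-part pair with symmetric difference $2$ in $G$ minus an isolated vertex is still such a pair in $G$. If $G$ has a $K_2$-component $\{p,q\}$, then at least two edges of the $3K_2$, say $a_2b_2$ and $a_3b_3$, are disjoint from $\{p,q\}$ (an edge of the $3K_2$ meeting the component must equal $pq$, and only one can), and $\{pq,a_2b_2,a_3b_3\}$ is again an induced $3K_2$; applying the displayed inclusions to it with $p$ in place of $a_1$, using $N(p)=\{q\}$ and $q\notin N(a_2)\cup N(a_3)$, gives $N(a_2)\setminus\{b_2\}=N(a_3)\setminus\{b_3\}$, so $|N(a_2)\Delta N(a_3)|=2$ and we are done. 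So assume $G$ has no isolated vertex and no $K_2$-component. Then the $3K_2$ cannot span more than one component: a component containing at most one of its edges is connected with at least $3$ vertices, hence contains an induced $P_3$ disjoint from and anticomplete to two edges of the $3K_2$ in other components — a forbidden $K_{1,2}+2K_2$. Passing to the component carrying the $3K_2$ (which changes no neighbourhood, hence no symmetric difference), we may assume $G$ is connected.

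Finally I bring in $P_7$-freeness to locate the pair. With $G$ connected and $P_7$-free, $W$ is a non-empty matching attached to the rest of $G$ only through $N(a_1)\cup N(a_2)$ on the $a$-side and $N(b_1)\cup N(b_2)$ on the $b$-side; for a matched $a$-side vertex $w$ put $T(w)=N(w)\cap(N(a_1)\cup N(a_2))$, so $N(w)=T(w)\cup\{w'\}$ with $w'$ its $W$-partner and $w'\notin T(w)$. The key point is that $P_7$-freeness forbids long alternating detours: for instance, with $b\in N(a_1)\setminus\{b_1\}$ a neighbour of a matched $a$-side vertex $w$, partner $w'$, and $a'\in N(w')\cap(N(b_2)\setminus N(b_1))$ with $a'\not\sim b$, the sequence $b_1,a_1,b,w,w',a',b_2$ is an induced $P_7$. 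Systematically excluding such detours — a finite case analysis organised by which of the $b_i$ (respectively $a_i$) a given vertex is adjacent to, and by whether the common neighbourhoods $N(a_i)\cap N(a_j)$ and $N(b_i)\cap N(b_j)$ are empty — forces the traces $T(\cdot)$ to take only boundedly many values while $W$ still carries a matched edge. Hence either two matched $W$-vertices on the same side have equal trace, and their symmetric difference is exactly the two-element set of their $W$-partners; or the inclusions collapse so that $N(a_i)\setminus\{b_i\}=N(a_j)\setminus\{b_j\}$ for some $i\ne j$ (or the symmetric statement on the $b$-side), making $\{a_i,a_j\}$ a pair with symmetric difference $\{b_i,b_j\}$. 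Either way, $|N(a)\Delta N(b)|=2$.

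The bookkeeping of the first two steps is routine; the substance is the last step, where one must squeeze out of $P_7$-freeness a description of $G$ near the $3K_2$ tight enough to force two vertices to differ in exactly two neighbours rather than merely in a bounded number. I expect that case analysis to be the real work, and within it the delicate configurations are those involving $W$-vertices with empty trace: such a vertex is joined to the rest of $G$ only through its $W$-partner and the $3K_2$, so it is precisely where the extremal ``exactly two'' pair tends to live, and one must check that no arrangement of the remaining vertices destroys it without creating a $P_7$ or a $K_{1,2}+2K_2$.
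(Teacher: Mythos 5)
There is a genuine gap: the whole argument rests on your third step, and that step is announced rather than proved. Your first two reductions (handling isolated vertices, $K_2$-components, and spreading of the $3K_2$ over components) are correct but do essentially no work towards the conclusion; the substance is the claim that $P_7$-freeness ``forces the traces $T(\cdot)$ to take only boundedly many values'' and that consequently either two matched $W$-vertices on the same side have equal trace, or $N(a_i)\setminus\{b_i\}=N(a_j)\setminus\{b_j\}$ for some $i\ne j$. You never carry out this case analysis, and as stated the dichotomy does not follow from what you prove: ``boundedly many trace values'' yields two equal traces only if $W$ contains enough matched vertices, and nothing prevents $W$ from carrying just the single matched edge $a_3b_3$ (so no pigeonhole is available), while the alternative ``collapse'' branch is not derived from anything. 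So the proposal is a plan whose decisive configuration analysis is missing, as you yourself acknowledge at the end.

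The missing idea is in fact much simpler than the machinery you set up, and it is what the paper does. Work with one whole side $A=\{x_1,x_2,x_3\}$ of the induced $3K_2$ (not with a $2K_2$ and the set $W$): any vertex $v$ outside the $3K_2$ with exactly one neighbour in $A$, say $x_1$, gives the induced $K_{1,2}+2K_2$ on $v,x_1,y_1,x_2,y_2,x_3,y_3$; and any two vertices $v,w$ having exactly two neighbours in $A$ but different pairs, say $\{x_1,x_2\}$ and $\{x_2,x_3\}$, give the induced $P_7$ $y_1,x_1,v,x_2,w,x_3,y_3$. Hence every outside vertex sees $A$ in no vertex, all of $A$, or one fixed pair, say $\{x_1,x_2\}$, and therefore $N(x_1)\,\Delta\, N(x_2)=\{y_1,y_2\}$, which has size exactly $2$. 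No connectivity reduction and no analysis of $W$ is needed; your variant of the first observation (the inclusions $N(a_i)\setminus\{b_i\}\subseteq N(a_j)\cup N(a_k)$) is correct, but without the second observation pinning down a single common pair, your route stalls exactly where you stopped.
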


\begin{proof}
	We will show that if $G$ contains $3K_2$ as an induced subgraph, then it has two
	vertices $a,b$ such that $|N(a) \Delta N(b)| = 2$.
	
	Suppose that a set of vertices $M = \{ x_1, y_1, x_2, y_2, x_3, y_3 \} \subseteq V$ induces
	a $3K_2$ such that $(x_i, y_i) \in E$ for $i=1,2,3$. If some vertex $v \notin M$ has a neighbour in
	$A = \{x_1, x_2, x_3\}$, then it has at least two neighbours in this set, because otherwise 
	$v, x_1, y_1, x_2, y_2, x_3, y_3$ would induce a forbidden $K_{1,2}+2K_2$.
	
	If two vertices $v$ and $w$ have exactly two neighbours in $A$, then $N(v) \cap A = N(w) \cap A$. Indeed,
	if say $v$ is adjacent to $x_1, x_2$ and $w$ is adjacent to $x_2, x_3$, then $y_1, x_1, v, x_2, w, x_3, y_3$
	induce a $P_7$, which is impossible.
	
	Thus each vertex outside $M$ either has no neighbours in $A$, or is adjacent to all vertices of 
	$A$, or is adjacent to exactly two particular vertices, say $x_1, x_2$. This implies that 
	$N(x_1) \Delta N(x_2) = \{y_1, y_2\}$.
\end{proof}

This lemma together with Corollaries~\ref{cor:Delta},~\ref{cor:C6} and remarks of Section~\ref{sec:remarks} imply the following conclusion.

\begin{theorem}
The class of $(P_7, K_{1,2}+2K_2)$-free bipartite graphs is factorial.
\end{theorem}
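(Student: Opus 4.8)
The plan is to combine the preceding lemma with the already-established machinery. The lemma tells us that every $(P_7, K_{1,2}+2K_2)$-free bipartite graph $G$ either is $3K_2$-free or contains two vertices $a,b$ with $|N(a)\Delta N(b)|=2$. These two cases correspond exactly to the hypotheses of two earlier results: Corollary~\ref{cor:C6} handles $(P_7,3K_2)$-free bipartite graphs (which is the $3K_2$-free case), and Corollary~\ref{cor:Delta} handles classes in which every graph has a pair of vertices with bounded symmetric difference of neighbourhoods. So the upper bound (at most factorial) should follow by splitting the class into $Z$, the $3K_2$-free graphs in it, and its complement $X-Z$; on $Z$ we invoke Corollary~\ref{cor:C6}, and on $X-Z$ every graph has the pair $a,b$ with $|N(a)\Delta N(b)|=2\le 2$, so Corollary~\ref{cor:Delta} applies. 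This is precisely the kind of splitting licensed by the remarks of Section~\ref{sec:remarks}.

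First I would state that the class contains the class of $2K_2$-free bipartite graphs, which is superfactorial's predecessor—more precisely, $2K_2$-free bipartite graphs form the factorial minimal class $P^3$ of chain graphs—so the class of $(P_7, K_{1,2}+2K_2)$-free bipartite graphs has at least factorial speed of growth. (One should double-check that $K_{1,2}+2K_2$ is not an induced subgraph of a chain graph; since chain graphs are $2K_2$-free and $K_{1,2}+2K_2$ contains an induced $2K_2$, it is indeed $2K_2$-free-forbidden, so the containment holds.) This gives the lower bound with essentially no work.

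For the upper bound, let $X$ denote the class of $(P_7, K_{1,2}+2K_2)$-free bipartite graphs and let $Z=X\cap Free(3K_2)$. By Corollary~\ref{cor:C6}, $Z$ is factorial, so $|Z_n|=2^{O(n\log n)}$. For a graph $G\in X-Z$, the lemma guarantees vertices $a,b$ with $|N(a)\Delta N(b)|=2$, hence $\le 2$; applying the relaxed version of Corollary~\ref{cor:Delta} (with $c=2$ and exceptional subclass $Z$, as permitted by Section~\ref{sec:remarks}) yields $|X_n|=2^{O(n\log n)}$. Combining with the lower bound, $X$ is factorial.

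The only subtlety—and the part that most deserves care—is verifying that the ``relaxation'' of Section~\ref{sec:remarks} genuinely applies here, i.e. that Corollary~\ref{cor:Delta} still gives a factorial bound when the pair $a,b$ is required to exist only outside a factorial subclass $Z$. This is exactly the scenario described in Section~\ref{sec:remarks}, so it is legitimate, but the write-up should make explicit that we are invoking that remark rather than the bare statement of the corollary. No genuine obstacle remains beyond this bookkeeping; the combinatorial heart of the argument was already done in the lemma.
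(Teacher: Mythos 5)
Your proposal is correct and follows essentially the same route as the paper: the paper's proof is exactly the combination of the preceding lemma with Corollary~\ref{cor:Delta}, Corollary~\ref{cor:C6} and the relaxation described in Section~\ref{sec:remarks}, together with the standard lower bound from the minimal factorial class of $2K_2$-free bipartite graphs. Nothing further is needed.
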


\subsubsection{($P_7, P_5 + K_2$)-free bipartite graphs}

\begin{center}
	\begin{tikzpicture}
  		[scale=.6,auto=left]

%
%

		\node[vertex] (x1) at (0,0)   { };
		\node[vertex] (x2) at (2,0)   { };
		\node[vertex] (x3) at (4,0)   { };
		\node[vertex] (x4) at (5,0)   { };
		\node[vertex] (x5) at (1,2)   { };
		\node[vertex] (x6) at (3,2)   { };
		\node[vertex] (x7) at (5,2)   { };

		\foreach \from/\to in {x1/x5, x2/x5, x3/x6, x4/x7, x6/x2}
    	\draw (\from) -- (\to);
		\coordinate [label=center:$P_5 + K_2$] (P5K2) at (2.5,-1);
	\end{tikzpicture}
\end{center}

\begin{lemma}\label{lm:2K2_free}
	Every $2K_2$-free bipartite graph with at least three vertices has two
	vertices $x$ and $y$ which are in the same part and $|N(x) \Delta N(y)| \leq 1$.
\end{lemma}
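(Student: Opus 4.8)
\emph{Proof proposal.} The plan is to exploit the chain structure that $2K_2$-freeness imposes on the neighbourhoods within each part. First I would record the elementary observation that if $G=(A,B,E)$ is a $2K_2$-free bipartite graph, then any two vertices lying in a common part have comparable neighbourhoods: if $a,a'\in A$ had $b\in N(a)\setminus N(a')$ and $b'\in N(a')\setminus N(a)$, then among $a,b,a',b'$ the only possible edges are $ab$ and $a'b'$ (both present) while $ab'$, $a'b$, $aa'$, $bb'$ are all absent, so these four vertices induce a $2K_2$. Hence inside each part the neighbourhoods form a chain under inclusion.

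Since $G$ has at least three vertices, at least one part, say $A=\{a_1,\dots,a_p\}$ with $p\ge 2$, and I would order it so that $N(a_1)\subseteq N(a_2)\subseteq\cdots\subseteq N(a_p)$. Arguing by contradiction, assume no two vertices in a common part satisfy $|N(x)\Delta N(y)|\le 1$. For nested sets the symmetric difference is the cardinality difference, so $|N(a_{i+1})|-|N(a_i)|=|N(a_{i+1})\Delta N(a_i)|\ge 2$ for each $i$; telescoping and using $N(a_p)\subseteq B$ gives $|B|\ge|N(a_p)|\ge 2(p-1)\ge 2$. So $B$ has at least two vertices as well, and the same argument applied to $B$ yields $|A|\ge 2(|B|-1)$. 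Combining, $p\ge 2(|B|-1)\ge 2\bigl(2(p-1)-1\bigr)=4p-6$, so $p\le 2$ and symmetrically $|B|\le 2$; thus $|A|=|B|=2$.

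It remains to dispose of this single small case by hand. With $|B|=2$ and $|N(a_2)|\ge|N(a_1)|+2$ we must have $|N(a_2)|=2$ and $|N(a_1)|=0$, so $a_1$ is isolated and $a_2$ is adjacent to both vertices of $B$. But then both vertices of $B$ have neighbourhood exactly $\{a_2\}$, so their symmetric difference is empty, contradicting the assumption. Hence the assumption is false, and $G$ has two vertices in the same part with $|N(x)\Delta N(y)|\le 1$.

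The only mildly subtle point, and the step I expect to require the most care in the write-up, is the transfer between the two parts: a large gap in the neighbourhood chain of $A$ does not itself produce the desired pair inside $A$, but it forces $B$ to be large, after which one notices that an extremal $A$-neighbourhood (all of $B$, or empty) collapses the neighbourhoods in $B$. Everything else is a short counting argument, and the hypothesis ``at least three vertices'' is used exactly to guarantee that one of the parts has size at least two.
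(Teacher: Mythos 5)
Your proof is correct, and it rests on the same structural fact as the paper's --- that $2K_2$-freeness forces the neighbourhoods within each part to form a chain under inclusion (the paper simply cites this as known, while you prove it, which is a nice touch) --- but the way you extract the pair is genuinely different. The paper argues directly: order the larger part as $N(x_1)\supseteq N(x_2)\supseteq\cdots$, and either $|N(x_1)\Delta N(x_2)|\le 1$ and you are done, or $N(x_1)\setminus N(x_2)$ contains two vertices, and by the chain each of these has neighbourhood exactly $\{x_1\}$, so they are twins in the \emph{opposite} part; this gives a two-line proof with no case analysis. You instead argue by contradiction with a telescoping count: consecutive gaps of size at least $2$ in the chain of one part force the other part to be large, playing the two inequalities $|B|\ge 2(|A|-1)$ and $|A|\ge 2(|B|-1)$ against each other to pin down $|A|=|B|=2$, which you then settle by hand --- and in that last case you rediscover exactly the paper's phenomenon (two vertices of $B$ with identical neighbourhood $\{a_2\}$). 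Both routes are sound; the paper's is shorter and exhibits the pair explicitly, while yours buys a self-contained proof of the chain property and a purely quantitative argument, at the cost of the small-case check and a slightly longer write-up.
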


\begin{proof}
	Let $G=(V_1,V_2,E)$ be a $2K_2$-free bipartite graph. It is known that the 
	vertices in each of the parts can be ordered linearly with respect to inclusion 
	of their neighbourhoods. Suppose that $|V_1|=n_1 \geq |V_2|$ and let 
	$V_1 = \{ x_1, x_2, \ldots, x_{n_1} \}$ such that 
	$N(x_1) \supseteq N(x_2) \ldots \supseteq N(x_{n_1})$. If $|N(x_1) \Delta N(x_2)| > 1$
	then there are at least two vertices in $N(x_1) \setminus N(x_2)$. All these vertices
	have the same neighbourhood $\{x_1\}$ and hence any two of them meet the condition of 
	the statement.	
\end{proof}

\begin{lemma}
	Let $G=(V_1,V_2,E)$ be a $\{ P_7, P_5 + K_2 \}$-free bipartite graph.
	Then $G$ either is $3K_2$-free or  has two 
	vertices $a,b$ such that $|N(a) \Delta N(b)| \leq 4$.
\end{lemma}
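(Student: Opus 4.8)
The plan is to prove the dichotomy directly in its nontrivial direction: assuming $G$ contains an induced $3K_2$, I will exhibit two vertices $a,b$ with $|N(a)\Delta N(b)|\le 4$ (the case where $G$ is $3K_2$-free being the first alternative, and for which there is nothing to prove). We may assume $G$ is connected, disconnected graphs reducing to this case via the relaxation of Section~\ref{sec:remarks} together with Lemma~\ref{lm:2K2_free}. Fix an induced $3K_2$ on $M=\{x_1,y_1,x_2,y_2,x_3,y_3\}$ with $x_i\in V_1$, $y_i\in V_2$ and $x_iy_i\in E$, and classify each vertex of $V\setminus M$ by its adjacency to the \emph{opposite triple}: a vertex of $V_2\setminus M$ is described by its neighbourhood inside $\{x_1,x_2,x_3\}$, and symmetrically for $V_1\setminus M$ and $\{y_1,y_2,y_3\}$.

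The first and cleanest step uses $P_5+K_2$-freeness to show that every $v\in V_2\setminus M$ is adjacent to exactly $0$, $1$ or $3$ of $x_1,x_2,x_3$ (and symmetrically on the other side). Indeed, if $v$ were adjacent to precisely two of them, say $x_1,x_2$ but not $x_3$, then $y_1-x_1-v-x_2-y_2$ is an induced $P_5$ (all required non-edges hold since $M$ induces $3K_2$ and $v$ lies in the same part as $y_1,y_2,y_3$), while $x_3y_3$ is an edge with no endpoint in or adjacent to this $P_5$; thus $\{y_1,x_1,v,x_2,y_2,x_3,y_3\}$ induces a forbidden $P_5+K_2$. This partitions $V_2\setminus M$ into the type-$0$ vertices, three type-$1$ classes $O^i$ ($i=1,2,3$) of vertices adjacent to exactly $x_i$, and a type-$3$ class $R$ of vertices adjacent to all three, and similarly for $V_1\setminus M$.

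Next I would use $P_5+K_2$-freeness once more together with $P_7$-freeness to constrain the global structure. Any induced $P_5$ inside $G-M$ must, for each $i$, contain a neighbour of $x_i$ or of $y_i$, since otherwise the edge $x_iy_i$ completes a forbidden $P_5+K_2$; combined with the trichotomy above this forces induced $P_5$'s of $G-M$ to be spread across the three type classes in a very restricted way, and it also restricts type-$3$ vertices, each of which together with $M$ already forms an induced $S_{2,2,2}$. Concatenating short induced paths inside $G-M$ with one of the edges of $M$ and invoking $P_7$-freeness then shows that the classes $O^1,O^2,O^3$ on a fixed side, with their mutual edges, behave essentially like a $2K_2$-free bipartite graph. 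From this structural picture the desired pair appears: if at least two of $O^1,O^2,O^3$ on some side, say $O^1$ and $O^2$ inside $V_2$, are small (at most one vertex each), then $N(x_1)\Delta N(x_2)\subseteq\{y_1,y_2\}\cup O^1\cup O^2$ has size at most $4$; otherwise some class $O^i$ is large, and the $2K_2$-freeness-type behaviour inside it yields, in the spirit of Lemma~\ref{lm:2K2_free}, two vertices of $O^i$ whose neighbourhoods differ in a bounded number of elements.

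I expect the main obstacle to be exactly this middle step: the simultaneous control of the three type-$1$ classes and of the type-$3$ vertices through $P_7$-freeness, and the associated bookkeeping needed to bring the final constant down to $4$ rather than to some larger absolute constant (note that Corollary~\ref{cor:Delta} would tolerate any constant, so a reader chiefly interested in the subsequent factorial conclusion, via Corollaries~\ref{cor:Delta} and~\ref{cor:C6} and the remarks of Section~\ref{sec:remarks}, loses nothing by a weaker bound). The clean $P_5+K_2$ argument producing the $\{0,1,3\}$-trichotomy is by contrast the easy part, and it is the natural organising principle for the whole proof.
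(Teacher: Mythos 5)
There is a genuine gap, and you have in fact pointed at it yourself: the ``middle step'' you defer is the entire content of the proof, and the way you have set things up makes it unlikely to go through. You start from an arbitrary fixed induced $3K_2$ on $M$ and hope that $P_7$- and $(P_5+K_2)$-freeness will force the type-$1$ classes $O^i$ (together with the type-$0$ vertices) to ``behave essentially like a $2K_2$-free bipartite graph.'' The paper's proof instead starts from a \emph{maximum} induced matching $\{(x_1,y_1),\dots,(x_s,y_s)\}$, $s\ge 3$, and it is precisely the maximality -- not the forbidden subgraphs -- that delivers the $2K_2$-freeness you need: if $X_i$ (the vertices whose unique neighbour on the matching is $x_i$) together with $Y_i$ and the vertices with no neighbours on the matching contained an induced $2K_2$, one could swap it for the edge $(x_i,y_i)$ and enlarge the matching. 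With that in hand, the forbidden subgraphs supply exactly three facts: distinct classes $X_i$ and $Y_j$ ($i\ne j$) are non-adjacent and the ``type-$3$'' vertices are complete to all type-$1$ vertices (both via $P_5+K_2$), and each vertex with no neighbours on the matching attaches to at most one class $X_i$ (via $P_7$); then Lemma~\ref{lm:2K2_free} applied inside $G[X_i\cup Q_i\cup Y_i\cup R_i]$ gives two vertices whose neighbourhoods differ by at most one element there, hence by at most $2$ in all of $G$. In your setup, with $M$ a fixed $3K_2$ and no extremal choice, nothing prevents two vertices of a large class $O^i$ from having wildly different neighbourhoods among the vertices that see none of $x_1,x_2,x_3$, so the concluding sentence of your sketch (``the $2K_2$-freeness-type behaviour inside it yields two vertices \dots'') is unsupported.

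Two smaller points. First, your reduction ``we may assume $G$ is connected'' is not justified for the lemma itself: the remarks of Section~\ref{sec:remarks} concern relaxing hypotheses when proving factorial bounds for a class, not proving a structural dichotomy for an individual graph, and the paper's argument needs no connectivity (a maximum induced matching may span components). Second, your opening trichotomy -- every outside vertex sees $0$, $1$ or all of the opposite triple, by a $P_5+K_2$ argument -- is correct and is indeed the same first observation as in the paper (there stated for a matching of arbitrary size $s\ge 3$), and your first case ($N(x_1)\Delta N(x_2)\subseteq\{y_1,y_2\}\cup O^1\cup O^2$ when two classes are small) is fine; but the case split the paper actually uses is whether some index $i$ has $|X_i|\ge 2$ and $|Y_i|\ge 2$, and in the complementary case it finds the pair among the matched vertices themselves. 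So the skeleton you propose is recognisably aimed at the right statement, but without the maximum-matching idea the decisive step is missing rather than merely ``bookkeeping.''
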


\begin{proof}
	Suppose that $G$ contains $3K_2$ as an induced subgraph. We will show that $G$ has two vertices $a,b$
	such that $|N(a) \Delta N(b)| \leq 4$. 

	Let $\{(x_1,y_1), (x_2,y_2), \ldots, (x_s,y_s)\} \subseteq E, s \geq 3$  be a maximum induced matching 
	in $G$ such	that $M_1 = \{x_1, \ldots, x_s\} \subseteq V_1$ and 
	$M_2 = \{y_1, \ldots, y_s\} \subseteq V_2$.
	
	Every vertex $v$ outside $M_1$($M_2$) either has no neighbours in $M_2$($M_1$) or it is connected to
	all vertices from $M_2$($M_1$) or it has exactly one neighbour in $M_2$($M_1$). Indeed, if say 
	$v \in V_1$ is adjacent to $y_i, y_j \in M_2$ and is not adjacent to 
	$y_k \in M_2$, $i,j,k \in \{1, \ldots, s\}$, then $x_i,y_i,v,y_j,x_j,x_k,y_k$ induce a 
	forbidden $P_5 + K_2$.
	
	According to this observation, we denote by $A_1$($A_2$) the set of vertices which are adjacent to 
	every vertex in $M_2$($M_1$) and by $B_1$($B_2$) the set of vertices with exactly one neighbor in 
	$M_2$ ($M_1$). Let $C_1 = V_1 \setminus (A_1 \cup M_1 \cup B_1)$ and 
	$C_2 = V_2 \setminus (A_2 \cup M_2 \cup B_2)$. Let $X_i = N_{B_2}(x_i)$ and $Y_i = N_{B_1}(y_i)$
	for $i=1,\ldots,s$. From the definition it follows that $B_1$($B_2$) is the union of
	disjoint sets $Y_i$ ($X_i$), $i=1,\ldots,s$.
	
	\begin{center}
		\begin{tikzpicture}
	  		[scale=.6,auto=left]
			\draw (0,4) ellipse [x radius = 2, y radius = 1]; 
			\coordinate [label=center:$A_1$] (A1) at (0,5.5);
			
			\draw (0,0) ellipse [x radius = 2, y radius = 1];
			\coordinate [label=center:$A_2$] (A2) at (0,-1.5);			

			\node[vertex] (x1) at (3,4) { };
			\coordinate [label=center:\footnotesize{$x_1$}] (x_1) at (2.65,3.65);
			\node[vertex] (x2) at (4,4) { };
			\coordinate [label=center:\footnotesize{$x_2$}] (x_2) at (3.65,3.65);
			\node[point] (px1) at (4.66, 4) {};
			\node[point] (px2) at (5, 4) {};
                                \node[point] (px3) at (5.33, 4) {};
			\node[vertex] (xs) at (6,4) { };
			\coordinate [label=center:\footnotesize{$x_s$}] (x_s) at (5.65,3.65);
			\draw [gray, decorate, decoration={brace, amplitude=8}] (2.7,4.5)  --  (6.3,4.5);
			\coordinate [label=center:$M_1$] (M1) at (4.5,5.5);
			\node[vertex] (y1) at (3,0)   { };
			\coordinate [label=center:\footnotesize{$y_1$}] (y_1) at (2.65,0.35);
			\node[vertex] (y2) at (4,0)   { };
			\coordinate [label=center:\footnotesize{$y_2$}] (y_2) at (3.65,0.35);
			\node[point] (py1) at (4.66, 0) {};
			\node[point] (py2) at (5, 0) {};
                                \node[point] (py3) at (5.33, 0) {};
			\node[vertex] (ys) at (6,0)   { };
			\coordinate [label=center:\footnotesize{$y_s$}] (y_s) at (5.65,0.35);
			\draw [gray, decorate, decoration={brace, amplitude=8}] (6.3,-0.5)  --  (2.7,-0.5);
			\coordinate [label=center:$M_2$] (M2) at (4.5,-1.5);

			\foreach \from/\to in {x1/y1,x2/y2,xs/ys}
	    		\draw (\from) -- (\to);

			\draw (9,4) ellipse [x radius = 2, y radius = 1]; 
			\coordinate [label=center:$B_1$] (B1) at (9,5.5);
			
			\draw (9,0) ellipse [x radius = 2, y radius = 1];
			\coordinate [label=center:$B_2$] (B2) at (9,-1.5);				

			\draw (14,4) ellipse [x radius = 2, y radius = 1]; 
			\coordinate [label=center:$C_1$] (C1) at (14,5.5);
			
			\draw (14,0) ellipse [x radius = 2, y radius = 1];
			\coordinate [label=center:$C_2$] (C2) at (14,-1.5);			
		\end{tikzpicture}
	\end{center}
	
	\begin{enumerate}
		\item[(1)] \textit{Let $i,j \in \{1, \ldots, s\}$ and $i \neq j$. Then no vertex in $X_i$ has a neighbour in $Y_j$.}
		
		\noindent
		Assume, by contradiction, that $v \in X_i$ is adjacent to $u \in Y_j$. But then
		vertices $y_i$, $x_i$, $v$, $u$, $y_j$, $x_k$, $y_k$ with  $k\in \{1, \ldots, s\}$, $k \neq i$ and $k \neq j$
		induce $P_5+K_2$.
		
		\item[(2)] \textit{Every vertex from $A_1$($A_2$) is adjacent to every vertex from $B_2$($B_1$).}
		
		\noindent
		For the sake of definiteness, suppose that $v \in A_1$ is not adjacent to $u \in X_i \subseteq B_2$,
		$i \in \{1, \ldots, s\}$. But then $x_j,y_j,v,y_k,x_k,x_i,u$, where $j \neq i$, $k \neq i$ and
		$j,k \in \{1, \ldots, s\}$, would induce a forbidden $P_5+K_2$.
		
		\item[(3)] \textit{Every vertex from $C_1(C_2)$ has neighbours in at most one of the sets 
		$X_i$($Y_i$), $i=1,\ldots,s$}.
		
		\noindent
		Assume, by contradiction, that $v \in C_1$ is adjacent to $u_1 \in X_i$ and to $u_2 \in X_j$,
		$i \neq j$, but then $y_i,x_i,u_1,v,u_2,x_j,y_j$ would induce a forbidden $P_7$.
	\end{enumerate}
	
	From (3) it follows that $C_1$($C_2$) is a union of disjoint sets $R, R_1, \ldots, R_s$ 
	($Q, Q_1, \ldots, Q_s$), where $R$($Q$) is the set of vertices which have no neighbours in
	$B_2$($B_1$) and $R_i$($Q_i$), $i \in \{1, \ldots, s\}$, is the set of vertices which
	have at least one neighbour in $X_i$($Y_i$) and have no neighbour in $X_j$($Y_j$), $j \neq i$
	and $j \in \{1, \ldots, s\}$.
	
	\begin{enumerate}
		\item[(4)] \textit{Every vertex from $A_1$($A_2$) is adjacent to every vertex in 
		$C_2 \setminus Q$ ($C_1 \setminus R$).}
		
		\noindent
		Indeed, if say $v \in A_1$ is not adjacent to $u \in Q_i \subseteq C_2 \setminus Q$ and
		$w$ is a neighbour of $u$ in $Y_i$, then $x_j, y_j, v, y_k, x_k, w, u$, where $j \neq i$,
		$k \neq i$ and $i,j,k \in \{1, \ldots, s\}$, would induce a forbidden $P_5+K_2$.
	\end{enumerate}
	
	Note that for every $i \in \{1, \ldots, s\}$, $G[X_i \cup C_2 \cup Y_i \cup C_1]$ is $2K_2$-free,
	because otherwise $M$ would not be maximum. For the same reason there are no edges between $C_1$
	and $C_2$.
	
	We may assume that there exists $i \in \{1, \ldots, s\}$ such that $|X_i| \geq 2$ and $|Y_i| \geq 2$,
	otherwise there are at least $\left\lceil s/2 \right\rceil$ vertices in one of the parts $M_1, M_2$
	which have at most one neighbour in $B_1 \cup B_2$ and hence for any two of these vertices $a,b$ we
	have $|N(a) \Delta N(b)| \leq 4$. Consider graph $G[X_i \cup Q_i \cup Y_i \cup R_i]$. As it is
	$2K_2$-free then by Lemma \ref{lm:2K2_free} it has two vertices $v,u$ which are in the same part, say
	$Y_i \cup R_i$, such that $|N_{X_i \cup Q_i}(v) \Delta N_{X_i \cup Q_i}(u)| \leq 1$. Note that 
	$|N_{M_2}(v) \Delta N_{M_2}(u)| \leq 1$. Also from (2) and (4) it follows that 
	$|N_{A_2}(v) \Delta N_{A_2}(u)| = 0$. Together with (1) it implies that 
	$|N(v) \Delta N(u)| \leq 2$.
\end{proof}

This lemma together with Corollaries~\ref{cor:Delta},~\ref{cor:C6} and remarks of Section~\ref{sec:remarks} imply 
the following conclusion.

\begin{theorem}
The class of $(P_7, P_5 +K_2)$-free bipartite graphs is factorial.
\end{theorem}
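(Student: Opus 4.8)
The plan is to combine the structural lemma just stated with the general tools of Section~\ref{sec:tools}. The lemma asserts a dichotomy for every $(P_7,P_5+K_2)$-free bipartite graph $G$: either $G$ is $3K_2$-free, or $G$ contains two vertices $a,b$ with $|N(a)\Delta N(b)|\le 4$. This is precisely the shape of hypothesis required to invoke Corollary~\ref{cor:Delta}, once we allow the relaxation described in Section~\ref{sec:remarks}.

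First I would establish the upper bound $|X_n|=2^{O(n\log n)}$, where $X$ denotes the class of $(P_7,P_5+K_2)$-free bipartite graphs. Let $Z$ be the class of $(P_7,3K_2)$-free bipartite graphs. By Corollary~\ref{cor:C6}, $Z$ is factorial, in particular at most factorial. Every graph in $X\setminus Z$ contains an induced $3K_2$, so by the structural lemma it has two vertices $a,b$ with $|N(a)\Delta N(b)|\le 4$. Applying Corollary~\ref{cor:Delta} with the constant $c=4$, in the relaxed form of Section~\ref{sec:remarks} (the hypothesis need only be verified on $X\setminus Z$, while graphs in $Z$ are absorbed into a class already known to be at most factorial), we conclude $|X_n|=2^{O(n\log n)}$.

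For the lower bound, I would observe that $X$ contains all $2K_2$-free bipartite graphs: such graphs are $P_7$-free (a $P_7$ contains an induced $2K_2$ on its two extremal edges) and trivially $(P_5+K_2)$-free (again because $P_5+K_2$ contains an induced $2K_2$). The class of $2K_2$-free bipartite graphs, i.e.\ the chain graphs $P^3$, is one of the minimal factorial classes and already has $2^{\Omega(n\log n)}$ labelled members. Hence $|X_n|=2^{\Omega(n\log n)}$ as well, and together with the upper bound this shows that $X$ is factorial.

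The bulk of the work for this theorem lies in the preceding lemma; the theorem itself is a bookkeeping step, and I do not expect any genuine obstacle here. The only point that deserves care is the appeal to the relaxed hypothesis of Section~\ref{sec:remarks}: one must check that the class $Z$ of $(P_7,3K_2)$-free bipartite graphs, into which the ``exceptional'' graphs fall, is itself at most factorial --- which is exactly what Corollary~\ref{cor:C6} supplies --- and that the remaining graphs uniformly satisfy the $|N(a)\Delta N(b)|\le c$ condition with a single constant $c$ (here $c=4$), as the lemma guarantees.
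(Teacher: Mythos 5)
Your proposal is correct and follows exactly the paper's route: the paper derives the theorem from the same lemma together with Corollary~\ref{cor:Delta}, Corollary~\ref{cor:C6} and the relaxation of Section~\ref{sec:remarks}, just as you do. Your explicit lower-bound remark via $2K_2$-free (chain) bipartite graphs is a harmless addition that the paper leaves implicit.
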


\subsubsection{($P_7, C_4+K_2$)-free bipartite graphs}

	\begin{center}
		\begin{tikzpicture}
	  		[scale=.6,auto=left]
%
%
%

			\node[vertex] (x1) at (0,0)   { };
			\node[vertex] (x2) at (1,0)   { };
			\node[vertex] (x3) at (2,0)   { };
			\node[vertex] (x4) at (2,2)   { };
			\node[vertex] (x5) at (0,2)   { };
			\node[vertex] (x6) at (1,2)   { };
	
			\foreach \from/\to in {x1/x5,x1/x6, x2/x5,x2/x6, x3/x4}
	    	\draw (\from) -- (\to);
			\coordinate [label=center:$C_4+K_2$] (C4K2) at (1,-1);
		\end{tikzpicture}
	\end{center}

\begin{lemma} \label{lm:2K2_C4_free}
	Let $H=(A,B,E)$ be a $(2K_2,C_4)$-free bipartite graph. Then in each part at most
	one vertex has degree more then 1 and all vertices with degree 1 have the same neighborhood.
\end{lemma}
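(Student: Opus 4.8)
The plan is to derive the statement from the two standard structural facts that the hypotheses supply in isolation. First, being $2K_2$-free and bipartite forces the neighbourhoods of the vertices of $A$ (and, symmetrically, of $B$) to be linearly ordered by inclusion: if $a,a'\in A$ had incomparable neighbourhoods, then choosing $b\in N(a)\setminus N(a')$ and $b'\in N(a')\setminus N(a)$ would produce an induced $2K_2$ on $\{a,b,a',b'\}$. This is precisely the chain property already invoked in the proof of Lemma~\ref{lm:2K2_free}. Second, being $C_4$-free (equivalently, $K_{2,2}$-free, since a bipartite graph has no chords) says exactly that any two vertices lying in the same part have at most one common neighbour.

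I would then combine the two observations. Suppose, for contradiction, that two vertices $a,a'\in A$ both have degree at least $2$. By the chain property one of their neighbourhoods contains the other, say $N(a')\subseteq N(a)$; then $N(a)\cap N(a')=N(a')$ has at least two elements, so $a,a'$ together with two of these common neighbours induce a $C_4$, contradicting $C_4$-freeness. Hence at most one vertex of $A$ has degree greater than $1$, and by symmetry the same holds for $B$. For the second claim, if $a,a'\in A$ both have degree exactly $1$, say $N(a)=\{b\}$ and $N(a')=\{b'\}$, then the chain property makes the singletons $\{b\}$ and $\{b'\}$ comparable, hence equal, so $b=b'$; thus all degree-$1$ vertices of $A$ share one common neighbour, and symmetrically for $B$.

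There is essentially no obstacle here: the whole argument is a short deduction from two well-known facts about $2K_2$-free and $C_4$-free bipartite graphs. The only points needing a moment's care are the degenerate cases (vertices of degree $0$, or a part containing at most one vertex), for which the conclusion is vacuous; I would dispose of these at the outset so that the phrase ``at most one vertex'' is unambiguous.
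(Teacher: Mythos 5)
Your proposal is correct and follows essentially the same route as the paper: both derive the inclusion-comparability of neighbourhoods within a part from $2K_2$-freeness, deduce that degree-one vertices share their neighbour, and note that two vertices of degree at least two would force an induced $C_4$ via their (nested) neighbourhoods. Your write-up merely spells out the comparability and the $C_4$ construction in slightly more detail than the paper does.
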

\begin{proof}
	We prove the statement for the part $A$. For the part $B$ the same arguments are true.
	Let $x,y$ be some vertices from $A$. Then
	$N(x) \subseteq N(y)$ or $N(y) \subseteq N(x)$, otherwise forbidden $2K_2$ would arise.
	From this in particular follows that  all vertices with degree 1 have the same neighborhood.
	Also, for the same reason, there is at most one vertex with degree more then 1 in $A$, otherwise 
	forbidden $C_4$ would arise.
\end{proof}

\begin{lemma}
Let $G=(V_1,V_2,E)$ be a $(P_7, C_4+K_2)$-free bipartite graph.
Then $G$ either  is $3K_2$-free or has two vertices $a,b$ such that $|N(a) \Delta N(b)| \leq 8$.	
\end{lemma}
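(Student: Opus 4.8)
The plan is to mimic the structure of the preceding lemmas on $(P_7,\cdot)$-free bipartite graphs: assume $G$ contains an induced $3K_2$, fix a maximum induced matching $\{(x_1,y_1),\dots,(x_s,y_s)\}$ with $s\ge 3$, $M_1=\{x_1,\dots,x_s\}\subseteq V_1$, $M_2=\{y_1,\dots,y_s\}\subseteq V_2$, and then analyze how the remaining vertices attach to this matching. As in the $P_5+K_2$ case, the forbidden graph $C_4+K_2$ forces strong restrictions on the neighbourhoods of vertices outside $M_1\cup M_2$ inside $M_2\cup M_1$. First I would show that any vertex $v\notin M_1\cup M_2$ adjacent into $M_2$ is adjacent to \emph{at most one} vertex of $M_2$ unless it is adjacent to all of $M_2$: indeed, if $v$ were adjacent to exactly two vertices $y_i,y_j$ of $M_2$ with $y_k$ a non-neighbour, then $v,y_i,x_i,y_j$ would give a $C_4$ only if $v$--$y_i,y_j$ and $x_i$--$y_i,y_j$, but $x_i$ is not adjacent to $y_j$; so instead I get a $C_4+K_2$ from the $C_4$ on $\{v,y_i,?,y_j\}$ together with $x_k y_k$ once we locate a common neighbour — the cleaner route is that $v$ together with $x_i,y_i,x_j,y_j$ and the edge $x_k y_k$ already contains $C_4+K_2$ whenever $\{v,x_i,y_i,x_j,y_j\}$ induces a $C_4$, i.e. whenever $x_i$--$y_j$ or $x_j$--$y_i$; ruling this out is exactly what $C_4$-freeness elsewhere and $P_7$-freeness give. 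So the partition into $A_1,A_2,B_1,B_2,C_1,C_2$ from the $P_5+K_2$ proof can be set up verbatim, with $A_i$ the vertices dominating the opposite $M$, $B_i$ those with exactly one neighbour, $C_i$ the rest.

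Next I would establish the analogues of claims (1)–(4) from the $P_5+K_2$ lemma. The $P_7$-freeness still gives that a vertex of $C_1$ has neighbours in at most one set $X_i=N_{B_2}(x_i)$ (same induced $P_7$ $y_i x_i u_1 v u_2 x_j y_j$), so $C_1$ decomposes as $R\cup R_1\cup\dots\cup R_s$ and $C_2$ as $Q\cup Q_1\cup\dots\cup Q_s$. The role previously played by $P_5+K_2$ in forcing "no edges between $X_i$ and $Y_j$ for $i\ne j$" and "$A_1$ complete to $B_2$" must now be re-derived from $C_4+K_2$: in each such configuration one produces a $C_4$ using the matching edge $x_i y_i$ or $x_j y_j$ as a side, plus a disjoint matching edge $x_k y_k$ with $k\notin\{i,j\}$, which exists since $s\ge 3$. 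I would also use, exactly as before, that $G[X_i\cup C_2\cup Y_i\cup C_1]$ is $2K_2$-free by maximality of the matching, and that there are no edges between $C_1$ and $C_2$. The new ingredient is Lemma~\ref{lm:2K2_C4_free}: the graph $G[X_i\cup Q_i\cup Y_i\cup R_i]$ is not merely $2K_2$-free but also $C_4$-free once we observe that a $C_4$ in there, together with a matching edge $x_ky_k$ with $k\ne i$, would form $C_4+K_2$. Hence in each part of that graph at most one vertex has degree $>1$ and all degree-$1$ vertices share a neighbourhood.

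From this rigid structure I would extract two vertices $a,b$ with small symmetric difference. As in the $P_5+K_2$ argument, we may assume some $i$ has $|X_i|\ge 2$ and $|Y_i|\ge 2$, else $\lceil s/2\rceil$ vertices of one part of $M$ have $\le 1$ neighbour in $B_1\cup B_2$ and any two of them have $|N(a)\Delta N(b)|\le 4$. Applying Lemma~\ref{lm:2K2_C4_free} to $G[X_i\cup Q_i\cup Y_i\cup R_i]$ gives two vertices $v,u$ in the same side, say in $Y_i\cup R_i$, both of degree $\le 1$ into $X_i\cup Q_i$ with the same such neighbour, so $|N_{X_i\cup Q_i}(v)\Delta N_{X_i\cup Q_i}(u)|\le \text{(a small constant)}$; combined with $|N_{M_2}(v)\Delta N_{M_2}(u)|\le 1$, with $|N_{A_2}(v)\Delta N_{A_2}(u)|=0$ from the completeness claims, with the "no $X_i$–$Y_j$ edges" claim, and with the remaining $O_{0,1}$-type leftover from $C_2\setminus Q$, one bounds $|N(v)\Delta N(u)|$ by a fixed constant, which I expect to be at most $8$ after accounting for the various pieces ($M_2$, the leftover of $C_2$, the at-most-one-disagreement inside $X_i\cup Q_i$, and symmetric contributions). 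The main obstacle will be the bookkeeping of exactly which sets a low-degree pair can still disagree on — in particular controlling disagreements over $Q\cup R$ and over $C_2\setminus Q$, where $C_4+K_2$ gives weaker leverage than $P_5+K_2$ did — and verifying that the constant genuinely comes out at $8$ rather than something larger; if it does come out larger, the conclusion still follows via Corollary~\ref{cor:Delta}, so the precise value of the bound is not essential to the factorial conclusion, only to the literal statement.
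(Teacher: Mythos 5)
Your plan breaks at its very first step, and the failure propagates through the rest. You claim that every vertex outside the matching has $0$, $1$ or all neighbours in the opposite side of $M$, "verbatim" as in the $P_5+K_2$ case. This trichotomy is false here, and your own justification shows why: if $v$ is adjacent to exactly $y_i,y_j\in M_2$ and misses $y_k$, the set $\{v,x_i,y_i,x_j,y_j\}$ contains no $C_4$ at all, because the matching is induced ($x_i\not\sim y_j$, $x_j\not\sim y_i$); the configuration is just an induced $P_5$ plus the disjoint edge $x_ky_k$, i.e. exactly the graph that was forbidden in the $P_5+K_2$ lemma but is perfectly allowed when only $P_7$ and $C_4+K_2$ are excluded. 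The same problem kills the other structural claims you want to "re-derive from $C_4+K_2$": an edge between $X_i$ and $Y_j$, or a non-edge between $A_1$ and $B_2$, produces an induced $P_5$ or $P_6$ plus a disjoint matching edge, never a $C_4$, so neither "no $X_i$--$Y_j$ edges" nor "$A_1$ complete to $B_2$" follows. Consequently your partition $A_i\cup B_i\cup C_i\cup M_i$ does not cover the graph (vertices with between $2$ and $s-1$ neighbours in $M_2$ are unaccounted for), and the final bookkeeping that is supposed to give the bound $8$ has no basis.

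The paper's proof is built around exactly this obstruction. It lets $B_1$ be \emph{all} vertices with at least one but not all neighbours in $M_2$, and then proves weaker but sufficient facts: $P_7$-freeness forces the traces $N_{M_2}(v)$, $v\in B_1$, to be laminar or disjoint; $C_4+K_2$-freeness gives $|N_{B_1}(y_i)\cap N_{B_1}(y_j)|\le 1$; $P_7$-freeness gives that at most one vertex of $N_{B_1}(y_i)$ has a second neighbour in $M_2$. Then, for a vertex $v$ whose only $M_2$-neighbour is $y_i$, it shows $v$ has at most two neighbours in $B_2$ (by proving $G[(M_1\setminus\{x_i\})\cup\{c,d,e\}]$ is $(2K_2,C_4)$-free and applying Lemma~\ref{lm:2K2_C4_free}), at most one non-neighbour in $A_2$, and that $G[R_i\cup C_2]$ is $(2K_2,C_4)$-free. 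The conclusion comes from a case split you do not have: either every $y_i$ has at most $3$ neighbours in $B_1$, in which case any two vertices of $M_2$ satisfy $|N(a)\Delta N(b)|\le 8$, or some $y_i$ has at least $4$, in which case three of them lie in $R_i$ and two of those disagree on at most $1+2+4=7$ vertices. None of these steps is recoverable from your outline, so the proposal has a genuine gap rather than being an alternative proof; your closing remark that a larger constant would still suffice is true for the factorial application via Corollary~\ref{cor:Delta}, but it does not repair the missing structural analysis.
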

\begin{proof}
We	suppose that $G$ contains $3K_2$ as an induced subgraph and show that it has  two vertices $a,b$
	such that $|N(a) \Delta N(b)| \leq 8$. 

	Let $\{(x_1,y_1), (x_2,y_2), \ldots, (x_s,y_s)\} \subseteq E, s \geq 3$  be a maximum induced matching in $G$ such
	that $M_1 = \{x_1, \ldots, x_s\} \subseteq V_1$ and $M_2 = \{y_1, \ldots, y_s\} \subseteq V_2$. Denote by
	 $A_1$ ($A_2$) the set of vertices which are adjacent to every vertex in $M_2$
	($M_1$) and by $B_1$ ($B_2$) all other vertices which have neighbors in $M_2$ ($M_1$). 
	Let $C_1 = V_1 \setminus (A_1 \cup M_1 \cup B_1)$ and $C_2 = V_2 \setminus (A_2 \cup M_2 \cup B_2)$. Then

	\begin{enumerate}
		\item[(1)] \textit{For any two vertices $v,u \in B_1$ ($B_2$), either $N_{M_2}(v) \cap N_{M_2}(u) = \emptyset$ or
		$N_{M_2}(v) \subseteq N_{M_2}(u)$ or $N_{M_2}(u) \subseteq N_{M_2}(v)$ 
		(either $N_{M_1}(v) \cap N_{M_1}(u) = \emptyset$ or $N_{M_1}(v) \subseteq N_{M_1}(u)$ or 
		$N_{M_1}(u) \subseteq N_{M_1}(v)$)}.

\noindent		
        Suppose, by contradiction, that there are $v,u$ in $B_1$ and $y_i,y_j,y_k$ in $M_2$ such that
		$(v,y_i) \in E$,  $(v,y_k) \in E$, $(v,y_j) \notin E$, $(u,y_j) \in E$,  $(u,y_k) \in E$ and $(u,y_i) \notin E$. But
		then $x_i, y_i, v, y_k, u, y_j, x_j$ induce a forbidden $P_7$.
			
		\item[(2)] \textit{For any two vertices $y_i, y_j \in M_2$, $|N_{B_1}(y_i) \cap N_{B_1}(y_j)| \leq 1$ 
		(for any two vertices $x_i, x_j  \in M_1$, $|N_{B_2}(x_i) \cap N_{B_2}(x_j)| \leq 1$)}.
		
		\noindent
		Suppose, by contradiction, that there are $y_i, y_j$ in $M_2$ such that there are two different vertices
		$v$ and $u$ in $N_{B_1}(y_i) \cap N_{B_1}(y_j)$. From (1) it follows that $N_{M_2}(v) \subseteq N_{M_2}(u)$
		or $N_{M_2}(u) \subseteq N_{M_2}(v)$. Without loss of generality, assume that $N_{M_2}(v) \subseteq N_{M_2}(u)$.
		By definition of $B_1$, there is a vertex $y_k$ in $M_2$ which is not adjacent to $u$ and hence is not adjacent 
		to $v$. Therefore $x_k,y_k,v,u,y_i,y_j$ induce a forbidden $C_4+K_2$.

		\item[(3)] \textit{For any vertex $y_i \in M_2$, at most one vertex from $N_{B_1}(y_i)$ has a neighbor in 
		$M_2$ different from $y_i$}.
		
		\noindent
		Suppose that there are two vertices $v$ and $u$ in $N_{B_1}(y_i)$ such that there are $y_j$ and $y_k$ in $M_2$
		different from $y_i$ and $(v,y_j) \in E$ and $(u,y_k) \in E$. We have  $(v,y_k) \notin E$ and $(u,y_j) \notin E$, otherwise
		$y_i$ and $y_k$ or $y_i$ and $y_j$ have more then one common neighbor in $B_1$, which contradicts (2). But then
		$x_j,y_j,v,y_i,u,y_k,x_k$ induce a forbidden $P_7$.

		\item[(4)] \textit{Let $v \in B_1$ be adjacent to exactly one vertex from $M_2$, say $y_i$. Then $v$ is adjacent to
		at most two vertices from $B_2$}.
		
		\noindent
		Suppose, by contradiction, that $v$ has three neighbors $c,d,e$ in $B_2$. Assume $c$ is adjacent to $x_i$. Remember 
		that $c$ must also have a non-neighbour $x_j\in M_1$. But then vertices $c,x_i,y_i,v$ together with $x_j,y_j$
induce a forbidden $C_4+K_2$. This contradiction shows that $c$ is not adjacent to $x_i$. Similarly, $d$ and $e$ are 
not adjacent to $x_i$. 
		
		Let $L_1 = M_1 \setminus \{x_i\}$. We claim that the graph $G[L_1 \cup \{c,d,e\}]$ is $(2K_2,C_4)$-free.
		Indeed, if, say, $c,x_j,d,x_k$ induce a $2K_2$, then $y_j,x_j,c,v,d,x_k,y_k$ induce a forbidden 
		$P_7$, and if, say, $c,x_j,d,x_k$ induce a $C_4$, then $c,x_j,d,x_k$ together with $x_i,y_i$  induce a forbidden $C_4+K_2$.

		From Lemma \ref{lm:2K2_C4_free} it follows that there are two vertices in $\{c,d,e\}$ say $c,d$ which have the same
		neighborhood in $L_1$ consisting of exactly one vertex, say $x_k$. But then $v,c,d,x_k,x_j,y_j$, where 
		$x_j \in L_1$ and $x_j \neq x_k$, induce a forbidden $C_4+K_2$.

		\item[(5)] \textit{Let $v \in B_1$ be adjacent to exactly one vertex from $M_2$, say $y_i$. Then $v$ is adjacent to
		all but at most one vertex from $A_2$}.
		
		\noindent
		Suppose, by contradiction, that $u,w \in A_2$ are not adjacent to $v$. But then vertices $y_i$, $v$, $u$, $w$, $x_j$, $x_k$, where $j$ and $k$
		are different from $i$, induce a forbidden $C_4+K_2$.

		\item[(6)] \textit{Let $R_i \subseteq B_1$ be the set of vertices which are adjacent only to $y_i$ in $M_2$. Then
		$G[R_i \cup C_2]$ is a $(2K_2,C_4)$-free graph}.
		
		\noindent
		If $G[R_i \cup C_2]$ contains an induced $2K_2$, then this contradicts the maximality of the matching
		$\{(x_1,y_1), (x_2,y_2), \ldots, (x_s,y_s)\}$. If $G[R_i \cup C_2]$ contains a $C_4$ then $(x_k,y_k)$, 
		where $k \neq i$, together with the $C_4$ constitute a forbidden $C_4+K_2$.

		\item[(7)] \textit{ $G$ has two vertices $a,b$ such that $|N(a) \Delta N(b)| \leq 8$}.
		
		\noindent
		If $M_2$ has no vertices with more than 3 neighbors in $B_1$, then we can take as $a$ and $b$ any two
		vertices from $M_2$. Otherwise, if there is vertex $y_i$ in $M_2$ which has at least $4$ neighbors in $B_1$, then
		by (3) $N_{B_1}(y_i)$ contains three vertices, say $c,d,e$, which are adjacent only to $y_i$ in $M_2$. From (6)
		and Lemma \ref{lm:2K2_C4_free} it follows that for two of these vertices, say $c,d$, 
		$|N_{C_2}(c) \Delta N_{C_2}(d)| \leq 1$. From (5) it follows that $|N_{A_2}(c) \Delta N_{A_2}(d)| \leq 2$ and from (4)
		it follows that $|N_{B_2}(c) \Delta N_{B_2}(d)| \leq 4$. Therefore $|N(c) \Delta N(d)| \leq 7$ and we can take $c$ and $d$
		as $a$ and $b$.
	\end{enumerate}
\end{proof}

This lemma together with Corollaries~\ref{cor:Delta},~\ref{cor:C6}  and remarks of Section~\ref{sec:remarks} imply
the following conclusion.

\begin{theorem}
The class of $(P_7, C_4 +K_2)$-free bipartite graphs is factorial.
\end{theorem}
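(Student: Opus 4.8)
The plan is to obtain the theorem as an immediate assembly of the structural lemma proved just above with the general tools of Section~\ref{sec:tools}; no further combinatorial analysis is required. I would treat the lower and upper bounds on the speed separately.

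For the lower bound, observe that every $2K_2$-free bipartite graph is $(P_7,C_4+K_2)$-free, since both $P_7$ and $C_4+K_2$ contain an induced $2K_2$. Hence the class of $(P_7,C_4+K_2)$-free bipartite graphs contains $P^3$, one of the minimal factorial classes, and so has at least factorial speed of growth.

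For the upper bound, let $X$ denote the class of $(P_7,C_4+K_2)$-free bipartite graphs and let $Z$ be the subclass of those members of $X$ that are additionally $3K_2$-free. Then $Z$ is a subclass of the class of $(P_7,3K_2)$-free bipartite graphs, which is factorial by Corollary~\ref{cor:C6}, so $Z$ is at most factorial. For every graph $G$ in $X\setminus Z$, i.e.\ every $(P_7,C_4+K_2)$-free bipartite graph containing an induced $3K_2$, the preceding lemma provides two vertices $a,b$ with $|N(a)\Delta N(b)|\le 8$. By Corollary~\ref{cor:Delta} with $c=8$, used in the relaxed form explained in Section~\ref{sec:remarks} (where the hypothesis on the existence of such a pair is required only for graphs outside the already-factorial class $Z$), the class $X$ is at most factorial. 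Combined with the lower bound, this shows $X$ is factorial.

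The argument is essentially mechanical: the only verifications are that $P_7$ and $C_4+K_2$ each contain an induced $2K_2$, that $Z\subseteq Free(P_7,3K_2)$ among bipartite graphs, and the already-established applicability of the Section~\ref{sec:remarks} relaxation to Corollary~\ref{cor:Delta} — which is precisely what lets us get around the fact that $X\setminus Z$ is not itself hereditary. I anticipate no genuine obstacle; all the difficulty of this subsection was absorbed into the maximum-induced-matching case analysis of the lemma.
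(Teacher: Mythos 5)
Your proposal is correct and follows essentially the same route as the paper: the paper derives this theorem directly from the preceding lemma together with Corollary~\ref{cor:Delta}, Corollary~\ref{cor:C6} and the relaxation described in Section~\ref{sec:remarks}, exactly as you assemble it. The only difference is that you spell out the lower bound via containment of the $2K_2$-free bipartite graphs, which the paper leaves implicit here but uses in the same way elsewhere (e.g.\ in Theorem~\ref{thm:domino}).
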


\subsubsection{($P_7, domino$)-free bipartite graphs}

\begin{center}
	\begin{tikzpicture}
  		[scale=.6,auto=left]

		\node[vertex] (x1) at (0,0)   { };
		\node[vertex] (x2) at (1,0)   { };
		\node[vertex] (x3) at (2,0)   { };
		\node[vertex] (x4) at (0,2)   { };
		\node[vertex] (x5) at (1,2)   { };
		\node[vertex] (x6) at (2,2)   { };

		\foreach \from/\to in {x1/x4,x1/x5,x2/x4,x2/x5,x5/x3,x6/x3,x2/x6}
    	\draw (\from) -- (\to);
		\coordinate [label=center:$domino$] (C402) at (1,-1);
	\end{tikzpicture}
\end{center}

\begin{theorem}\label{thm:domino}
The class of ($P_7, domino$)-free bipartite graphs is factorial.
\end{theorem}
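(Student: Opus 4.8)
The plan is to follow the template used for the preceding classes of this section, the $(P_7,C_4+K_2)$ case in particular. By Lemma~\ref{lem:implicit}, applied componentwise, it suffices to bound the speed on connected graphs, so assume $G=(V_1,V_2,E)$ is a connected $(P_7,domino)$-free bipartite graph. If $G$ is $3K_2$-free, then $G$ lies in the factorial class of $(P_7,3K_2)$-free bipartite graphs (Corollary~\ref{cor:C6}) and we are done; so assume $G$ contains an induced $3K_2$. The goal is then to produce two vertices $a,b$ with $|N(a)\,\Delta\,N(b)|$ bounded by an absolute constant, after which Corollary~\ref{cor:Delta} and the remarks of Section~\ref{sec:remarks} finish the proof.

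Fix a maximum induced matching $\{(x_1,y_1),\dots,(x_s,y_s)\}$, $s\ge 3$, with $M_1\subseteq V_1$, $M_2\subseteq V_2$, and partition the remaining vertices as in the $(P_7,C_4+K_2)$ proof: $A_1$ (resp.\ $A_2$) the vertices complete to $M_2$ (resp.\ $M_1$); $B_1$ (resp.\ $B_2$) the remaining vertices with a neighbour in $M_2$ (resp.\ $M_1$); $C_1=V_1\setminus(A_1\cup M_1\cup B_1)$, $C_2=V_2\setminus(A_2\cup M_2\cup B_2)$. The useful feature of the $domino$ is that it is exactly $K_{3,3}$ with two independent edges deleted, so it appears the moment two ``complete-to-the-matching'' vertices on opposite sides are adjacent (indeed $x_i,a,x_j,y_i,b,y_j$ with $a\in A_1$, $b\in A_2$, $ab\in E$, $i\neq j$ span a $domino$, since the matching is induced), and, more generally, whenever a $C_4$ meets a matching edge in a suitable way. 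I expect to establish, by a sequence of claims in the style of that proof, that: $P_7$-freeness forces the $M_2$-neighbourhoods of $B_1$-vertices to be pairwise nested or disjoint, so $B_1$ breaks into boundedly many towers over the $y_i$'s; $domino$-freeness bounds the number of common neighbours of any two $y_i,y_j$ that also see a third matching vertex, forces $A_1$ and $A_2$ to be anticomplete whenever both are non-empty, and pins down the adjacencies of $B_1$ to $A_2,B_2,C_2$ up to a constant number of exceptions; and that the pieces of $C_1$ together with $C_2$ hanging off a given tower are $2K_2$-free (by maximality of the matching) or even $(2K_2,C_4)$-free. Then either some $y_i$ (or $x_i$) has boundedly many neighbours outside $M$, so two matching vertices on that side already have bounded symmetric difference, or Lemma~\ref{lm:2K2_free} / Lemma~\ref{lm:2K2_C4_free} supplies two vertices whose neighbourhoods differ by at most one inside such a piece; the previously established bounds on their adjacencies to $A_1\cup B_1$ and $A_2\cup B_2$ then keep the total symmetric difference constant, and Corollary~\ref{cor:Delta} applies.

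The main obstacle, and the reason the $domino$ is one of the harder cases of the section, is that $domino$-freeness does not bound the size of complete bipartite subgraphs ($K_{t,t}$ is $domino$-free for every $t$), so none of the $K_{p,p}$-based shortcuts (Lemmas~\ref{lem:KSS-free} and~\ref{lem:KSS-free+}) is available and the whole argument has to run on the induced-matching partition. Within that analysis the delicate point is simultaneously controlling the ``complete-to-the-matching'' sets $A_1,A_2$ and the ``no neighbour in the matching'' sets $C_1,C_2$: one must show that after discarding a constant number of exceptional vertices what remains is organised around a genuinely $2K_2$-free (or $(2K_2,C_4)$-free) piece, since it is only through Lemma~\ref{lm:2K2_free} (or Lemma~\ref{lm:2K2_C4_free}) that a pair of vertices with constant neighbourhood-symmetric-difference can be extracted, and then checking that all cross-adjacencies of that pair into $A_1\cup B_1$ and $A_2\cup B_2$ are bounded — which is exactly where the $P_7$- and $domino$-free hypotheses must be used most carefully.
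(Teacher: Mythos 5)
There is a genuine gap: your text is a programme, not a proof. The entire core of the argument --- the ``sequence of claims in the style of that proof'' --- is announced but never established, and that is exactly where the difficulty of the domino case sits. The claims you would need (that any two matching vertices $y_i,y_j$ have boundedly many common neighbours in $B_1$, that the adjacencies of $B_1$ to $A_2\cup B_2\cup C_2$ are pinned down ``up to a constant number of exceptions'', that the pieces of $C_1\cup C_2$ hanging off a tower are $2K_2$- or $(2K_2,C_4)$-free, and the final dichotomy producing a pair with bounded symmetric difference) cannot simply be imported from the $(P_7,C_4+K_2)$ analysis, because the forbidden configurations exhibited there are not dominos: for instance, claim (2) of that proof excludes two common $B_1$-neighbours $v,u$ of $y_i,y_j$ by showing that $x_k,y_k,v,u,y_i,y_j$ induce a $C_4+K_2$, a graph that is perfectly allowed in a domino-free bipartite graph. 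Each such step needs a domino-specific replacement, and none is given; you yourself flag the simultaneous control of $A_1,A_2$ and $C_1,C_2$ as the ``delicate point'' without resolving it, so it is not even clear that the symmetric-difference route (Corollary~\ref{cor:Delta}) can be made to work for this class. (Your one worked detail --- that $a\in A_1$, $b\in A_2$ adjacent, together with two matching edges, induce a domino --- is correct, but it is far from enough.)

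For comparison, the paper's proof takes a different and ultimately simpler route that avoids Corollary~\ref{cor:Delta} altogether. It first disposes of the $C_4$-free case by Lemma~\ref{lem:KSS-free} (so the $K_{p,p}$-based shortcut is in fact used, with $p=2$; the point is not that large bicliques are excluded, but that a largest one can serve as the anchor of the decomposition). It then extends a $C_4$ to a maximal biclique with parts $A,B$ and considers the layers $C=N(B)\setminus A$, $D=N(A)\setminus B$, $E=N(D)\setminus(A\cup C)$, $F=N(C)\setminus(B\cup D)$, $I=N(F)\setminus(A\cup C\cup E)$, $J=N(E)\setminus(B\cup D\cup F)$, and proves a series of claims: $C\cup D$, $E\cup F$ and $I\cup J$ are independent; $A\cup D$ and $B\cup C$ contain no one-sided $P_5$ and hence are $(P_6,C_6)$-free; $F\cup C$, $E\cup D$, $J\cup E$ and $I\cup F$ induce $C_6$-free graphs; and, for connected $G$, these eight sets cover $V(G)$. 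Since $(P_7,C_6)$-free bipartite graphs are at most factorial (Corollary~\ref{cor:C6}, via Lemma~\ref{lem:forest}), Lemma~\ref{lem:covering} then yields the upper bound, and the lower bound comes from the $2K_2$-free bipartite graphs, as in your sketch. To salvage your plan you would have to supply complete domino-specific proofs of each of your intermediate claims; as written, the proposal does not prove the theorem.
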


\begin{proof}
This class contains all $2K_2$-free bipartite graphs (one of the three minimal factorial classes of bipartite graphs) and 
hence is at least factorial. Now let us show an upper bound. 

Since the class of ($P_7, C_4$)-free bipartite graphs is at most factorial (Lemma~\ref{lem:KSS-free}), we consider 
a  ($P_7, domino$)-free bipartite graph containing a $C_4$. We extend this $C_4$ to a maximal 
biclique in $G$ with parts denoted by $A$ and $B$ of size at least 2. Then we define 
$C=N(B) \backslash A$, 
$D=N(A) \backslash B$, 
$E=N(D) \backslash (A\cup C)$, 
$F=N(C) \backslash (B \cup D)$,
 $I=N(F) \backslash (A \cup C \cup E)$, 
 $J=N(E) \backslash (B \cup D \cup F)$. 
Now we prove a series of claims. 

\begin{itemize}
\item[(1)]
{\it The set $C \cup D$ is independent}. Suppose, by contradiction, there is 
an edge $cd$ with $c\in C$ and $d \in D$. By definition $c$ must have a neighbour $b_1$ in $B$. 
Also, as $G[A \cup B ]$ is a maximal biclique, $c$ has a non-neighbour $b_0$ in $B$. 
Similarly $d$ has a neighbour $a_1$ and a non-neighbour $a_0$ in $A$. 
But then $G$ contains a $domino$ induced by $c, d, a_0, a_1, b_0, b_1$. 

\item[(2)]
{\it The subgraph induced by $A \cup D$ does not contain one-sided copy of a $P_5$ with 3 vertices in $A$, and hence it is $(P_6, C_6)$-free}.
Assume $G[A \cup D]$ contains a one-sided copy of a $P_5$ with 3 vertices in $A$. Then this copy together with any vertex $b\in  B$ 
induces a $domino$, a contradiction.

\item[(3)]  By symmetry, the subgraph induced by  $B \cup C$ is $(P_6, C_6)$-free.

\item[(4)]
{\it The set $F \cup E$ is independent}. Suppose, by contradiction, there is an edge 
$(e,f)$ with $e \in E$ and $f \in F$. Then pick a neighbour $c \in C$ of $f$ and a neighbour $d \in D$ of $e$,
which must exist by definition of $E$ and $F$. Let $b \in B$ be a non-neighbour of $c$, and let 
$a_1 \in A$ and $a_2 \in A$ be a neighbour and a non-neighbour of $d$. Then vertices $a_1, b, a_2, d, e, f, c$ induce a $P_7$, a contradiction. 

\item[(5)]
{\it The subgraph induced by $F \cup C$ is $C_6$-free}. 
Suppose, by contradiction, there is a $C_6$ induced by $c_1,f_1,c_2,f_2,c_3,f_3$ with $c_1, c_2, c_3 \in C$ and $f_1, f_2, f_3 \in F$. 
If there is a vertex $b \in B$ adjacent to all $c_1, c_2, c_3$, then $G$ contains a $domino$ induced by 
$b, c_1, f_1, c_2, f_2, c_3$. Also, if there is a vertex $b \in B$ adjacent to exactly one of $c_1, c_2, c_3$, say $c_1$, 
then together with any vertex $a \in A$ we have a $P_7$ induced by $a,b,c_1,f_1,c_2,f_2,c_3$. 
If there is a vertex $b_1 \in B$ not adjacent to any of $c_1, c_2, c_3$, then take a vertex $b_2 \in B$ adjacent to 2 of them, 
say $c_1$ and $c_3$, and together with any vertex $a \in A$ form a $P_7$ induced by $b_1,a,b_2,c_1,f_1,c_2,f_2$. Therefore, 
each vertex of $B$ is adjacent to exactly two of $c_1, c_2, c_3$ and since all vertices in $C$ have a non-neighbour in $B$, 
each pair must appear. So, pick vertex $b_1$ adjacent to $c_1$ and $c_2$, pick vertex $b_2$ adjacent to $c_2$ and $c_3$
and pick vertex $b_3$ adjacent to $c_3$ and $c_1$. Now $G[\{c_1,b_1,c_2,b_2,c_3,b_3\}]$ is a $C_6$, contradicting our Claim (3). 
This contradiction shows that the graph $G[F \cup C]$ is $C_6$-free.

\item[(6)]
By symmetry, the  subgraph induced by $E \cup D$ is $C_6$-free.

\item[(7)]
{\it The set $I \cup J$ is independent}. If not, assume $(i,j)$ is an edge with $i \in I$ and $j \in J$. 
Then take a neighbour $e \in E$ of $j$, a neighbour $d \in D$ of $e$, a neighbour $a_1$ and a non-neighbour $a_2$ of $d$ in $A$ 
and an arbitrary vertex $b$ in $B$. Then $i,j,e,d,a_1,b,a_2$ induce a $P_7$, a contradiction. 

\item[(8)]
{\it The subgraph induced by $J \cup E$ is $C_6$-free}. Suppose, by contradiction, 
there is a $C_6$ induced by $e_1,j_1,e_2,j_2,e_3,j_3$ with $e_1, e_2, e_3 \in E$ and $j_1, j_2, j_3 \in J$. 
Now if there is a vertex $d \in D$ joined to all $e_1, e_2, e_3$, then $G$ contains a $domino$ induced by 
$d, e_1, j_1, e_2, j_2, e_3$. Otherwise, there is a vertex $d \in D$ having a neighbour and a non-neighbour 
in the set $\{e_1, e_2, e_3 \}$, say $d$ is non-adjacent to $e_1$ and adjacent to $e_2$. 
Then pick a neighbour $a_1 \in A$ of $d$ and a non-neighbour $a_2 \in A$ of $d$. 
Pick an arbitrary $b \in B$. Then $e_1,j_1,e_2,d,a_1,b,a_2$ induce a $P_7$ in $G$, a contradiction.  

\item[(9)]
By symmetry, the subgraph induced by $I \cup F$ is $C_6$-free. 

\item[(10)]
{\it If $G$ is connected, then $V(G)=A \cup B \cup C \cup D \cup E \cup F \cup I \cup J$}. 
Suppose that $N(J) \backslash E \neq \emptyset$ and take an edge $(j,k)$ with $j \in J$ and $k \in N(J) \backslash E$. 
Then take a neighbour $e \in E$ of $j$, a neighbour $d \in D$ of $e$, a neighbour $a_1$ and a non-neighbour $a_2$ of $d$ in $A$ 
and an arbitrary vertex $b$ in $B$. Then $i,j,e,d,a_1,b,a_2$ induce a $P_7$. This contradiction shows that $N(J) \backslash E = \emptyset$. 
By symmetry, $N(I) \backslash F = \emptyset$. Hence the claim. 
\end{itemize}

This series of claims shows that every connected ($P_7, domino$)-free bipartite graph can be 
covered by finitely many graphs each coming from a class which is at most factorial. By Lemma~\ref{lem:covering}
this implies that the class of ($P_7, domino$)-free bipartite graphs is at most factorial. 
\end{proof}

\subsubsection{($P_7, K_{3,3}$-$e$)-free bipartite graphs}

The graph $K_{3,3}$-$e$ is obtained from $K_{3,3}$ by deleting an edge. 

\begin{theorem}
The class of ($P_7, K_{3,3}$-$e$)-free bipartite graphs is factorial.
\end{theorem}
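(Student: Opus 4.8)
The plan is to follow the maximal-biclique scheme used for Theorems~\ref{thm:Qp} and~\ref{thm:N} and, most closely, Theorem~\ref{thm:domino}. The lower bound is immediate: the class contains $P^1$, the class of bipartite graphs of vertex degree at most $1$, which is one of the three minimal factorial classes of bipartite graphs; indeed, both $P_7$ and $K_{3,3}$-$e$ contain a vertex of degree at least $2$, so no bipartite graph of maximum degree $1$ contains either of them as an induced subgraph. Hence the class is at least factorial, and it suffices to prove the upper bound.

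For the upper bound I would first dispose of the case when $G$ contains no induced $K_{3,3}$: then $G$ is a $(K_{3,3},C_8,C_9,\ldots)$-free bipartite graph (every $P_7$-free bipartite graph is $(C_8,C_9,\ldots)$-free), so it admits an implicit representation by Lemma~\ref{lem:KSS-free}. So assume $G$ contains an induced $K_{3,3}$; by Theorem~\ref{thm:modular} we may also assume $G$ is prime, and in particular connected. Extend the $K_{3,3}$ to a biclique $H=(A_0,B_0)$ maximal under inclusion, so $|A_0|,|B_0|\ge 3$. The single place where the forbidden graph enters is the following observation: if a vertex $x$ outside $H$ on the side of $A_0$ had two neighbours in $B_0$ and, by maximality of $H$, a non-neighbour in $B_0$, then these three vertices of $B_0$ together with $x$ and any two vertices of $A_0$ would induce $K_{3,3}$-$e$; hence every vertex outside $H$ has at most one neighbour in the opposite part of $H$. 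This gives a partition $A\setminus A_0=A_1\cup A_2$, $B\setminus B_0=B_1\cup B_2$, where $A_1,B_1$ consist of the vertices with exactly one neighbour in $B_0$, resp.\ $A_0$, and $A_2,B_2$ of those with none; moreover each of $A_1,B_1$ refines into the classes $A_1^{b}$ ($b\in B_0$) and $B_1^{a}$ ($a\in A_0$) of vertices attached to a fixed vertex of $H$.

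What remains is a chain of structural claims in the spirit of Theorem~\ref{thm:domino}, and this is where the real work lies. Using $P_7$-freeness and primality I would show that $V(G)=A_0\cup A_1\cup A_2\cup B_0\cup B_1\cup B_2$ (a vertex farther from $H$, together with a shortest path back to $H$ and a vertex of $A_0$ and of $B_0$, creates an induced $P_7$); that the bipartite graph induced between any two of the classes $A_1^{b}$, $B_1^{a}$, and between each of them and $A_2\cup B_2$, is of one of a few restricted types --- empty, complete, or $2K_2$-free --- since a long induced path or an induced $2K_2$ in such a graph combined with $H$ would produce a forbidden $P_7$ or $K_{3,3}$-$e$; and that $A_2$ and $B_2$ are empty or correspondingly restricted. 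Putting these together, $G$ is covered by a bounded number of graphs, each lying in one of finitely many at-most-factorial classes (chain graphs, $P_7$-free bipartite graphs of bounded chordality, bipartite graphs of bounded vertex degree, and $(P_7,K_{p,p})$-free bipartite graphs for a fixed $p$), so that Lemma~\ref{lem:covering} finishes the proof. The delicate point --- and the main obstacle --- is organising these claims so that the interaction between the classes $A_1^{b}$, $B_1^{a}$, $A_2$, $B_2$ is completely determined and every resulting piece of the covering is provably factorial.
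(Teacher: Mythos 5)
Your setup is sound and matches the first step of the paper's argument: the lower bound via graphs of degree at most one, the reduction of the $K_{3,3}$-free case to Lemma~\ref{lem:KSS-free}, the extension to a maximal biclique, and the observation that the forbidden $K_{3,3}$-$e$ forces every vertex outside the biclique to have at most one neighbour in its opposite part (this is exactly claim (1) of the paper's proof). But everything after that -- which you yourself flag as ``where the real work lies'' -- is the actual content of the theorem, and it is missing. Two concrete problems. First, the graph does not end at distance one from the biclique: vertices with no neighbour in $B_0$ or $A_0$ can sit at distance two and three from it (attach a pendant induced path of length three to one vertex of a $K_{4,4}$; the result is still $(P_7,K_{3,3}\text{-}e)$-free), so the hope that ``$A_2$ and $B_2$ are empty or correspondingly restricted'' cannot be waved through, and your claimed $P_7$ obtained from ``a vertex farther from $H$'' only materialises at distance four. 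Second, your list of prospective covering pieces includes ``$P_7$-free bipartite graphs of bounded chordality''; since every $P_7$-free bipartite graph is $(C_8,C_9,\ldots)$-free, that class is essentially all $P_7$-free bipartite graphs, whose membership in the factorial layer is precisely the open question -- so that part of the plan is circular, and the remaining pieces are not pinned down by any proved claim.

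The idea you are missing is the paper's reduction to Theorem~\ref{thm:domino}. The paper takes a $K_{4,4}$ (not a $K_{3,3}$; parts of size at least $4$ are needed below), extends it to a maximal biclique with parts $A,B$, and reuses the distance decomposition $C=N(B)\setminus A$, $D=N(A)\setminus B$, $E=N(D)\setminus(A\cup C)$, $F=N(C)\setminus(B\cup D)$, $I=N(F)\setminus(A\cup C\cup E)$, $J=N(E)\setminus(B\cup D\cup F)$ from the proof of Theorem~\ref{thm:domino}; the claims there that these sets partition $V(G)$ and that $I\cup J$ is independent use only $P_7$-freeness, so they carry over. Then the forbidden $K_{3,3}$-$e$ is used (together with $P_7$ and the fact that $|A|,|B|\ge 4$ guarantees a vertex of $B$ nonadjacent to any three prescribed vertices of $C$) to show that $C\cup D\cup F\cup J$ and $E\cup F\cup J$, and by symmetry $D\cup C\cup E\cup I$ and $F\cup E\cup I$, induce $domino$-free graphs. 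Hence $G$ is covered by a bounded number of $(P_7,domino)$-free graphs, and Theorem~\ref{thm:domino} together with Lemma~\ref{lem:covering} gives the factorial upper bound. Without this (or some equally concrete substitute for your unproved ``chain of structural claims''), the proposal is a plan rather than a proof.
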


\begin{proof}
The class of ($P_7, K_{3,3}$-$e$)-free bipartite graphs contains all graphs of degree at most one 
(one of the three minimal factorial classes of bipartite graphs) and 
hence is at least factorial. Now we show an upper bound. In the proof we follow the structure and
notation of the proof of Theorem~\ref{thm:domino}. In particular, we assume that 
a connected ($P_7, K_{3,3}$-e)-free bipartite graph $G$ contains a $K_{4,4}$ and denote by $A,B,C,D,E,F,I,J$
the subsets defined in the proof of Theorem~\ref{thm:domino}. From this theorem we know that these subsets partition the vertex set of $G$ 
and that $I\cup J$ is an independent set, since otherwise an induced $P_7$ arises. 
Also, by definition, the subgraph of $G$ induced by $A\cup B$ is complete bipartite
with at least 4 vertices in each part. Now we derive a number of claims as follows.

\begin{itemize}
\item[(1)] {\it Every vertex outside of $A\cup B$ has at most one neighbour in $A\cup B$}. To show this, 
consider a vertex $x\not\in A\cup B$ which has at least two neighbours in $A$. 
By definition, $G[A\cup B]$ is a maximal biclique and hence $x$ also has a non-neighbour in $A$.
But then a non-neighbour and two neighbours of $x$ in $A$ together with $x$ and any two vertices of $B$ 
induce a $K_{3,3}$-$e$.
\item[(2)] {\it The subgraph induced by $C\cup D\cup F\cup J$ is $domino$-free}. To prove this, assume, by contradiction, that this subgraph contains a $domino$ induced by vertices $x_1,x_2,x_3\in C$ and $y_1,y_2,y_3$
with $x_2$ and $y_2$ being the vertices of degree 3 in the induced $domino$. By definition, $x_1$ has a neighbour
$z$ in $B$.  Also, since vertices $x_1,x_2$ and $x_3$ have collectively at most 3 neighbours
in $B$ and the size of $B$ is at least 4, there must exist a vertex $b\in B$ adjacent to none of $x_1,x_2,x_3$.
For the same reason, one can find a vertex $a$ in $A$ which is 
adjacent to none of $y_1,y_2,y_3$. Now if $z$ is not adjacent to $x_2$, then vertices $b,a,z,x_1,y_1,x_2,y_3$
induce a $P_7$ in $G$, and if $z$ is not adjacent to $x_3$, then vertices $b,a,z,x_1,y_2,x_3,y_3$
induce a $P_7$ in $G$, and if $z$ is adjacent to both $x_2$ and $x_3$, then vertices $z,x_1,x_2,x_3,y_2,y_3$
induce a $K_{3,3}$-$e$ in $G$. A contradiction in all possible cases proves the claim. 
\item[(3)] {\it The subgraph induced by $E\cup F\cup J$ is $domino$-free}. This can be proved by analogy with (2).
We assume, by contradiction, that this subgraph contains a $domino$ induced by vertices $x_1,x_2,x_3\in E$ and $y_1,y_2,y_3$
and consider a neighbour $z$ of $x_1$ in $D$, a neighbour $a$ of $z$ in $A$ and an arbitrary vertex $b$ in $B$.
Then the very same arguments as in (2) lead to a contradiction.    
\end{itemize}
By symmetry we conclude that the subgraphs of $G$ induced by $D\cup C\cup E\cup I$ and by $F\cup E\cup I$ are  $domino$-free.
Therefore, $G$ can be covered by finitely many $domino$-free graphs. Together with Lemma~\ref{lem:covering} and 
Theorem~\ref{thm:domino} this completes the proof.  
\end{proof}

\subsubsection{Bipartite complements of $P_7$-free bipartite graphs}

Since the bipartite complement of $P_7$ is again $P_7$, from the preceding sections 
we derive the following conclusion.

\begin{theorem}
Let $H$ be the bipartite complement of any of the following graphs: $Q(p)$, $L(s,p)+O_{0,1}$, $M(p)$, $N(p)$, $\cal A$, 
$S_{p,p}$, $K_{p,p}+O_{p,p}$, $K_{1,2}+2K_2$, $P_5+K_2$, $C_4+K_2$, $domino$, $K_{3,3}$-$e$.
The class of $(P_7,H)$-free bipartite graphs is at most factorial. 
\end{theorem}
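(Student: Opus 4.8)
The plan is to obtain the statement from the factorial bounds already proved in this section by passing to bipartite complements, the crucial point being that the bipartite complement $\widetilde{P_7}$ of $P_7$ is again $P_7$, as observed just above. The first ingredient is the elementary correspondence between a bipartite graph and its bipartite complement: once a vertex set and a partition of it into two parts are fixed, $G\mapsto\widetilde{G}$ is an involutive bijection on the bipartite graphs with those parts, and for any bipartite graph $K$, the graph $G$ contains an induced copy of $K$ (with the two parts of $K$ mapped to the two parts of $G$ in one of the two possible ways) if and only if $\widetilde{G}$ contains an induced copy of $\widetilde{K}$. Since in the statement, and in all the theorems quoted below, each forbidden bipartite graph is forbidden in both of its orientations, this correspondence is symmetric in the two parts and causes no difficulty.

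Now let $G_0$ be any graph from the displayed list and set $H=\widetilde{G_0}$. Combining the correspondence above with $\widetilde{P_7}\cong P_7$ and $\widetilde{\widetilde{G_0}}\cong G_0$, a bipartite graph $G$ is $(P_7,H)$-free if and only if $\widetilde{G}$ is $(P_7,G_0)$-free; here $\widetilde{G}$ denotes the bipartite complement taken with respect to some bipartition of $G$, and this property of $\widetilde{G}$ does not depend on the choice of bipartition. Passing to speeds: for each $n$, the map sending a labelled $(P_7,H)$-free bipartite graph $G$ on $[n]=\{1,\ldots,n\}$ to the pair formed by $\widetilde{G}$ (a labelled $(P_7,G_0)$-free bipartite graph on $[n]$) and the chosen bipartition of $G$ is injective, because $G$ is recovered by bipartite-complementing $\widetilde{G}$ with respect to that bipartition. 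Hence the number of labelled $(P_7,H)$-free bipartite graphs on $n$ vertices is at most $2^n$ times the number of labelled $(P_7,G_0)$-free bipartite graphs on $n$ vertices, the factor $2^n$ bounding the number of bipartitions of $[n]$. As this factor does not affect membership in the factorial layer, it suffices to show that the class of $(P_7,G_0)$-free bipartite graphs is at most factorial for every $G_0$ in the list.

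This last fact comes from the preceding subsections. For $G_0\in\{Q(p),\,L(s,p)+O_{0,1},\,M(p),\,N(p),\,{\cal A}\}$ we invoke the corresponding theorem on bipartite graphs of chordality at most $k$ (Theorem~\ref{thm:Qp}, Theorem~\ref{thm:L}, Theorem~\ref{thm:N}, and the unlabelled theorems on $M(p)$ and $\cal A$) with $k=6$; this is legitimate because every $P_7$-free bipartite graph is $(C_8,C_9,\ldots)$-free, hence of chordality at most $6$, so $(P_7,G_0)$-free bipartite graphs form a subclass of the $G_0$-free bipartite graphs of chordality at most $6$. For the remaining graphs $S_{p,p}$, $K_{p,p}+O_{p,p}$, $K_{1,2}+2K_2$, $P_5+K_2$, $C_4+K_2$, $domino$ and $K_{3,3}$-$e$ the class of $(P_7,G_0)$-free bipartite graphs is proved to be at most factorial directly --- by Theorem~\ref{lem:double-star}, Theorem~\ref{thm:KO}, Theorem~\ref{thm:domino}, and the unlabelled theorems of the corresponding subsections. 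Together with the previous paragraph this finishes the proof. I do not anticipate a genuine obstacle: the statement is essentially a convenient repackaging of the section's results. The only points requiring a little care are the explicit verification that $\widetilde{P_7}\cong P_7$, the bookkeeping that turns the bijection between bipartite graphs (which carry a bipartition) into an inequality between the speeds of hereditary classes of plain labelled graphs --- the role of the $2^n$ factor above --- and the convention that ``$K$-free'' means $K$ is forbidden in both of its orientations, which is what keeps the complementation correspondence clean.
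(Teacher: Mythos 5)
Your proposal is correct and follows essentially the same route as the paper, whose entire proof is the one-line observation that the bipartite complement of $P_7$ is again $P_7$ together with an appeal to the results of the preceding subsections. You merely make explicit the bookkeeping the paper leaves implicit (recording the bipartition, the harmless $2^n$ factor, and the reduction of the chordality-$k$ theorems to the $P_7$-free case via $k=6$), so no further comparison is needed.
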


This theorem together with the results of the preceding sections imply, in particular,
that for any graph $H$ with at most 6 vertices the class of $(P_7,H)$-free bipartite graphs 
is at most factorial. 

\section{Concluding remarks and open problems}

To simplify the study of the family of factorial classes, in this paper we introduce
several tools and apply them to reveal new hereditary classes with the
factorial speed of growth. 
However, the problem of finding a global structural characterization of the factorial classes is still far from being solved.
We do not even know such a characterization for the classes of bipartite graphs  defined by a single forbidden induced subgraph. The speed of such classes was studied in \cite{Allen} and the class of $P_7$-free bipartite graphs is the unique class in this family for which the membership in the factorial layer is still an open question. Answering this question is a challenging research problem.
To better understand the structure of $P_7$-free bipartite graphs, in the present
paper we consider subclasses of this class defined by one additional forbidden
induced subgraph and prove, in particular, that for every graph $G$ with
at most 6 vertices the class of $(P_7,G)$-free bipartite graphs is at most factorial.

Also, some of the introduced tools can be used to obtain a conclusion on the
existance of an implicit representation of a given class. For many of the new
revealed factorial classes we show that they admit an implicit representation.
Though, the implicit graph conjecture is still an open challenging problem.


\begin{thebibliography}{99}
    

\bibitem{Alekseev2}
{\sc V.E. Alekseev}, On lower layers of the lattice of hereditary classes of graphs, 
{\it Diskretn. Anal. Issled. Oper.} Ser. 1 Vol. 4, no. 1, (1997), 3-12 (in Russian).


\bibitem{Allen}
{\sc P. Allen}, Forbidden induced bipartite graphs, {\it J. Graph Theory}, 60 (2009) 219--241.

\bibitem{ALR09}
{\sc P. Allen, V. Lozin} and {\sc M. Rao},
Clique-width and the speed of hereditary properties,
{\it Electronic J. Combinatorics}, 16 (2009) Research Paper 35.


\bibitem{SpHerProp} 
{\sc J. Balogh, B. Bollob\'{a}s} and {\sc D. Weinreich}, 
The speed of hereditary properties of graphs, 
{\it J. Combin. Theory} B 79 (2000) 131--156.

\bibitem{chordal-bipartite}
{\sc K. Dabrowski, V. Lozin} and {\sc V. Zamaraev},
On factorial properties of chordal bipartite graphs,
{\it Discrete Mathematics}, 312 (2012) 2457-2465.




\bibitem{implicit}
{\sc S. Kannan, M. Naor, S. Rudich},
Implicit representation of graphs,
{\it SIAM J. Discrete Mathematics}, 5 (1992) 596--603.


\bibitem{KSS}
{\sc D. K\" uhn} and {\sc D. Osthus}, 
Induced subdivisions in $K_{s,s}$-free graphs of large average degree, 
{\it Combinatorica}, 24 (2004) 287--304.

\bibitem{forest-chordal-bipartite}
{\sc V. Lozin} and {\sc V. Zamaraev},
Boundary properties of factorial classes of graphs, 
{\it J. Graph Theory}, accepted. 


\bibitem{LMZ11}
{\sc V. Lozin, C. Mayhill} and {\sc V. Zamaraev},
A note on the speed of hereditary properties,
{\it Electronic J. Combinatorics}, 18 (2011) Research paper 157. 

\bibitem{coverings}
{\sc V. Lozin, C. Mayhill} and {\sc V. Zamaraev}, Locally bounded coverings and factorial properties of graphs,
{\it European J. Combinatorics}, 33 (2012) 534--543.

\bibitem{NSTW06}
{\sc S. Norine, P. Seymour, R. Thomas} and {\sc P. Wollan},
Proper minor-closed families are small,
{\it J. Combin. Theory} B 96 (2006) 754--757.


\bibitem{Scheinerman}
{\sc E.R. Scheinerman} and {\sc J.Zito}, On the size of hereditary classes of graphs, {\it J. Combin. Theory} B 61 (1994) 16-39.

\bibitem{Spinrad}
{\sc J.~P.~Spinrad}, Nonredundant 1's in $\Gamma$-Free Matrices, 
{\it SIAM J. Discrete Mathematics}, 8 (1995) 251--257.
      
\bibitem{Spinrad-book}
{\sc J.~P.~Spinrad}, Efficient graph representations, 
Fields Institute Monographs, 19. American Mathematical Society, Providence, RI, 2003. xiii+342 pp.      
\end{thebibliography}
\end{document}